\numberwithin{equation}{section}
\begin{document}

\newtheorem{theorem}{Theorem}[section]
\newtheorem{lemma}[theorem]{Lemma}
\newtheorem{corollary}[theorem]{Corollary}
\newtheorem{fact}[theorem]{Fact}
\newtheorem{proposition}[theorem]{Proposition}

\theoremstyle{definition}
\newtheorem{example}[theorem]{Example}
\newtheorem{definition}[theorem]{Definition}
\newtheorem{question}[theorem]{Question}

\theoremstyle{remark}
\newtheorem{claim}[theorem]{Claim}
\newtheorem{remark}[theorem]{Remark}

\def\endo{\operatorname{End}}
\def\fix{\operatorname{Fix}}
\def\ccma{\operatorname{CCMA}}
\def\dcf{\operatorname{DCF}}
\def\dcfa{\operatorname{DCFA}}
\def\acfa{\operatorname{ACFA}}
\def\ccm{\operatorname{CCM}}
\def\jet{\operatorname{Jet}}
\def\hom{\operatorname{Hom}}
\def\Th{\operatorname{Th}}
\def\cb{\operatorname{Cb}}
\def\cbsigma{\operatorname{Cb}_{\sigma}}
\def\qfcb{\operatorname{qfCb}}
\def\tp{\operatorname{tp}}
\def\tpsigma{\operatorname{tp}_{\sigma}}
\def\dimsigma{\operatorname{dim}_{\sigma}}
\def\qftp{\operatorname{qftp}}
\def\Stab{\operatorname{Stab}}
\def\stp{\operatorname{stp}}
\def\acl{\operatorname{acl}}
\def\aclsigma{\operatorname{acl}_{\sigma}}
\def\dcl{\operatorname{dcl}}
\def\dclsigma{\operatorname{dcl}_{\sigma}}
\def\eq{\operatorname{eq}}
\def\loc{\operatorname{CCM-loc}}
\def\jet{\operatorname{Jet}}
\def\alg{\operatorname{alg}}
\def\rank{\operatorname{rank}}
\def\SU{\operatorname{SU}}
\def\aut{\operatorname{Aut}}
\def\gal{\operatorname{Gal}}
\def\loc{\operatorname{loc}}

\def\ccmsigma{\ccm\text{-}\sigma}
\def\C{\mathcal C}
\renewcommand{\L}{{\mathcal L}}
\def\A{\mathcal A}
\def\X{\mathcal X}

\def\p{\bf P}

\def\PP{{\bf P}} %!!!

\def\UU{\mathbb{U}}
\def\QQ{\mathbb{Q}}
\def\ZZ{\mathbb{Z}}
\def\NN{\mathbb{N}}
\def\CC{\mathbb{C}}

\newcommand{\spanof}[1]{\left< #1\right>}

\newcommand{\slantfrac}[2]{\hbox{$\,^{#1}\!/_{#2}$}}  % Due to Shawn P. Neugebauer 
\newcommand{\quot}[2]{\slantfrac{#1}{#2}}

\newcommand{\isom}{\cong}
\newcommand{\elemequiv}{\equiv}
\newcommand{\elemeq}{\elemequiv}

\newcommand{\maps}{\rightarrow} % \maps 

\newcommand{\imp}{\rightarrow}
\providecommand{\s}{\sigma}
\providecommand{\id}{\operatorname{id}}

\renewcommand{\O}{\mathcal{O}}

%Symbols for elementary extension and restriction
\newcommand{\elex}{\preccurlyeq}
\newcommand{\elres}{\succcurlyeq}

%The following is for the nonforking symbol

\def\Ind#1#2{#1\setbox0=\hbox{$#1x$}\kern\wd0\hbox to 0pt{\hss$#1\mid$\hss}
\lower.9\ht0\hbox to 0pt{\hss$#1\smile$\hss}\kern\wd0}
\def\ind{\mathop{\mathpalette\Ind{}}}
\def\notind#1#2{#1\setbox0=\hbox{$#1x$}\kern\wd0\hbox to 0pt{\mathchardef
\nn=12854\hss$#1\nn$\kern1.4\wd0\hss}\hbox to
0pt{\hss$#1\mid$\hss}\lower.9\ht0 \hbox to
0pt{\hss$#1\smile$\hss}\kern\wd0}
\def\nind{\mathop{\mathpalette\notind{}}}

\date{\today}

\title[Compact complex manifolds with an automorphism]{Model theory of compact complex manifolds with an automorphism}
\author{Martin Bays}

\address{Martin Bays\\
\\
Institut f\"{u}r Logik und Grundlagenforschung\\
Fachbereich Mathematik und Informatik\\
Universit\"{a}t M\"{u}nster\\
Einsteinstrasse 62\\
48149 M\"{u}nster\\
Germany}
\email{mbays@sdf.org}

\author{Martin Hils}

\address{Martin Hils\\
Univ Paris Diderot, Sorbonne Paris Cit\'e, Institut de Math\'ematiques de Jussieu--Paris Rive Gauche, UMR 7586, CNRS, Sorbonne Universit\'es, UPMC Univ Paris 06, F-75013, Paris, France}

\curraddr{Institut f\"{u}r Logik und Grundlagenforschung\\
Fachbereich Mathematik und Informatik\\
Universit\"{a}t M\"{u}nster\\
Einsteinstrasse 62\\
48149 M\"{u}nster\\
Germany}

\email{hils@uni-muenster.de}

\thanks{M. Hils was partially funded by the Agence Nationale de Recherche [ValCoMo, Projet ANR blanc ANR-13-BS01-0006].}

\author{Rahim Moosa}

\address{Rahim Moosa\\
University of Waterloo\\
Department of Pure Mathematics\\
200 University Avenue West\\
Waterloo, Ontario \  N2L 3G1\\
Canada}
\email{rmoosa@uwaterloo.ca}

\thanks{R. Moosa was partially supported by an NSERC Discovery Grant.}

\keywords{Model Theory, Compact Complex Manifold, Generic Automorphism, Zilber Dichotomy, Canonical Base Property}
\subjclass[2010]{Primary: 03C60; Secondary: 03C45, 03C65, 32J99}
\begin{abstract}
Motivated by possible applications to meromorphic dynamics, and generalising known properties of difference-closed fields, this paper studies the theory $\ccma$ of compact complex manifolds with a generic automorphism.
It is shown that while $\ccma$ does admit geometric elimination of imaginaries, it cannot eliminate imaginaries outright: a counterexample to $3$-uniqueness in $\ccm$ is exhibited.
Finite-dimensional types are investigated and it is shown, following the approach of Pillay and Ziegler, that the canonical base property holds in $\ccma$.
As a consequence the Zilber dichotomy is deduced: finite-dimensional minimal types are either one-based or almost internal to the fixed field.
In addition, a general criterion for stable embeddedness in $TA$ (when it exists) is established, and used to determine the full induced structure of $\ccma$ on projective varieties, simple nonalgebraic complex tori, and simply connected nonalgebraic strongly minimal manifolds.

\end{abstract}

\maketitle
\setcounter{tocdepth}{1}
\tableofcontents

\section{Introduction}

\noindent
Underlying the applications of model theory to algebraic dynamics~\cite{descent,medvedevscanlon} is the equivalence of the category of rational dynamical systems with that of finitely generated difference fields:
To a rational dynamical system $(V,f)$ over $\mathbb C$, where $V$ is a projective algebraic variety and $f:V\to V$ is a dominant rational map, one associates the difference field $\big(\mathbb C(V),f^*\big)$ where $\mathbb C(V)$ is the rational function field and $f^*:\mathbb C(V)\to \mathbb C(V)$ is the endomorphism induced by precomposition with~$f$.
The theory of difference fields is tractable from a model-theoretic point of view because it admits a model companion, the theory $\acfa$ of difference-closed fields, which is tame: it admits a certain quantifier reduction, eliminates imaginaries, is supersimple, and realises a Zilber dichotomy for minimal types.

What happens when $(V,f)$ is a {\em meromorphic dynamical system}, that is, when $V$ is a possibly nonalgebraic compact complex manifold and $f$ is a dominant meromorphic map?
The first challenge is to find a replacement for the rational function field.
The meromorphic function field will not do as it only captures
the algebraic part of $V$.
For example, $\mathbb C(V)=\mathbb C$ when $V$ is a simple nonalgebraic complex torus.
An appropriate generalisation of the rational function field is
suggested by model theory.
Let $\A$ denote the structure whose sorts are (reduced and irreducible) compact complex analytic spaces and where the basic relations are all complex analytic subsets.
The first-order theory of $\A$, in this language $L$, is denoted by $\ccm$.
It is a stable theory that admits quantifier elimination, eliminates imaginaries, and whose sorts are of finite Morley rank --- see~\cite{survey} for a survey of this theory.
Let $\A'$ be an $\aleph_0$-saturated elementary extension of~$\A$.
Then $V$ is a sort and by saturation there exists a {\em generic} point of $V$ in $\A'$, i.e. an element of $V(\A')$ that it is not contained in $X(\A')$ for any proper complex analytic subset $X\subset V$.
What model theory suggests as an analogue to the rational function field is the $L$-substructure of $\A'$ whose underlying set is the definable closure of such a generic point.
Let us denote this $L$-structure, which is uniquely determined up to $L$-isomorphism, by $\C(V)$.
Note that by quantifier elimination, $\C(V)$ can be identified with the set of all dominant meromorphic maps $g:V\to W$, as $W$ ranges over all compact complex manifolds.
(In contrast, the meromorphic function field corresponds to considering only $W=\mathbb P(\mathbb C)$, the projective line.)

We can now imitate the difference-field construction.
If $(V,f)$ is a meromorphic dynamical system then $f$ induces a map $f^*:\C(V)\to\C(V)$ by precomposition.
Namely, fixing $c$ generic in $V$, every element of $\C(V)$ is of the form $g(c)$ for some dominant meromorphic map $g:V\to W$, and $f^*(g(c)):=g(f(c))$.
Note that as $f$ is dominant, $f(c)$ is again generic in $V(\A')$, and hence $g$ is defined at $f(c)$.
The dominance of $f$ also ensures that $f^*$ is injective.
%To see that it preserves the complex analytic structure, suppose we have $g_i:V\to W_i$ as above for $i=1,\dots, n$, and consider the meromorphic map $g:=(g_1,\dots,g_n):V\to W_1\times\cdots\times W_n$.Then for any complex analytic $X\subseteq W_1\times\cdots\times W_n$, we have that\begin{eqnarray*}g(c)\in X(\A')&\iff&g(V)\subseteq X \ \ \ \ \ \ \text{ as $c$ is generic in $V$}\\&\iff& g(f(V))\subseteq X\ \ \ \ \ \ \text{ as $f$ is dominant}\\&\iff& g(f(c))\in X(\A')\ \ \ \ \ \ \text{ again by genericity of $c$.}\end{eqnarray*}This shows that $f^*:\C(V)\to\C(V)$ is an $L$-embedding.
It is not hard to check that $f^*$ is an $L$-embedding and, exactly extending the algebraic case, the association $(V,f)\mapsto\big(\C(V),f^*\big)$ is an equivalence of categories.

This motivates us to study substructures of models of $\ccm$ equipped with $L$-embeddings.
That is, letting $L_\sigma:=L\cup\{\sigma\}$, we are interested in the universal $L_\sigma$-theory
$\displaystyle \ccm_{\forall,\sigma}:=\ccm_\forall\cup\{\sigma\text{ is an $L$-embedding}\}$.
As noted in~\cite{bgh}, $\ccm_{\forall,\sigma}$ has a model companion, denoted by $\ccma$.
Various tameness properties for $\ccma$, in particular supersimplicity, follow automatically from the work of Chatzidakis and Pillay~\cite{ChPi98} on generic automorphisms.
In this paper we begin a finer study of $\ccma$.

It may be worth emphasising that actual compact complex
manifolds have no nontrivial $L$-automorphisms since every point is named
in the language.
It is the consideration of $L$-automorphisms of {\em nonstandard models}
of compact complex manifolds that gives rise to $\ccma$.

Our first observation about $\ccma$ is negative: while it admits geometric elimination of imaginaries,
\begin{itemize}
\item[]
{\em $\ccma$ does not eliminate imaginaries.} (Corollary~\ref{notEI})
\end{itemize}
To prove this we use a characterisation of Hrushovski's which reduces the problem to proving that $\ccm$ does not satisfy the property of {\em 3-uniqueness} introduced in~\cite{HrGroupoids}.
The failure of $3$-uniqueness in $\ccm$, which is what Section~\ref{not3unique} is mostly dedicated to, is of independent
interest because it distinguishes $\ccm$ from the stable theories that
arise in the model theory of fields
($\operatorname{ACF},\operatorname{DCF},\operatorname{SCF}$).

Since the complex field is definable in $\ccm$, on the projective line, in $\ccma$ we have a definable difference-closed field extending $(\mathbb C,+,\times,\id)$.
One of the things we point out here is that in this way
\begin{itemize}
\item[]
{\em $\acfa$ is purely stably embedded in $\ccma$.} (Theorem~\ref{purityalg})\footnote{Here, and throughout, by $\acfa$ we really mean the complete theory of models of $\acfa$ extending the trivial difference field $(\mathbb C,+,\times,\id)$, i.e., with the elements of $\mathbb{C}$ named by constants.}
\end{itemize}
Contrast this with the situation for differentially closed fields with a generic automorphism, where $\acfa$ appears as a proper reduct of the full structure induced on the field of constants.

In a similar vein, we prove that 
\begin{itemize}
\item[]
{\em the full structure induced by $\ccma$ on any simple nonalgebraic complex torus, or on any simply connected nonalgebraic strongly minimal compact complex manifold, is just the complex analytic structure together with $\sigma$.} (Theorem~\ref{purity})
\end{itemize}
In fact all of these results about induced structure follow from a general characterisation for pure stable embeddability in $TA$, for any complete stable theory $T$ for which $TA$ exists.
This characterisation, which is established in Proposition~\ref{intern-prop}, is that the sort in question should {\em internalise finite covers in $T$}, that is, relative to that sort in $T$ almost internality should imply outright internality.
That the projective line internalises finite covers in $\ccm$ is a (known) uniform version of the fact that a finite cover of a Moishezon space is again Moishezon.
For simple nonalgebraic complex tori and for simply connected nonalgebraic strongly minimal compact complex manifolds, the condition is proved in Section~\ref{finitecovers}.

But our primary interest is in the structure of the {\em finite-dimensional} types, that is, the types of tuples $c$ such that the complex dimension of the locus of $\big(c,\sigma(c),\dots,\sigma^n(c)\big)$ is uniformly bounded as $n$ grows.
For example, the types arising from meromorphic dynamical systems as described above are finite-dimensional.
The converse is almost true too: every finite-dimensional type in $\ccma$ comes from a pair $(V,\Gamma)$ where $V$ is a compact complex manifold and $\Gamma$ is a finite-to-finite meromorphic self-correspondence on~$V$.
See Section~\ref{findim-section} for details.
Our main theorem about finite-dimensional types is that they enjoy the canonical base property: 
\begin{itemize}
\item[]
{\em the canonical base of a finite-dimensional type in $\ccma$ is almost internal, over a realisation of the type, to the fixed field.} (Theorem~\ref{cbp})
\end{itemize}
Our proof follows the approach of Pillay and Ziegler~\cite{pillayziegler03} for $\acfa$; we define an appropriate notion of {\em jet space} in the context of $\ccma$.
As a consequence, we obtain that
\begin{itemize}
\item[]
{\em a finite-dimensional minimal type in $\ccma$ is either one-based or almost internal to the fixed field.} (Corollary~\ref{zd})
\end{itemize}
We expect, but do not prove, that the finite-dimensionality assumption in the above Zilber dichotomy statement can be removed.
We point out in Example~\ref{infDimTrivMin} that infinite-dimensional types of SU-rank one exist.

In a final section we explain how the methods of Hrushovski~\cite{maninmumford} and Chatzidakis-Hrushovski~\cite{acfa1} extend to our setting to describe the finite-dimensional minimal types that are nontrivial and one-based; roughly speaking they arise as generic types of quantifier-free definable subgroups of simple nonalgebraic complex tori.
See Proposition~\ref{mnt1b} for the precise statement.

We have included three appendices on material that we need and that is well known but that we either could not find good references for or that we judged was worth including to make the article as self-contained as possible.

\medskip
\subsection{Acknowledgements}
This collaboration began during the Spring 2014 MSRI programme {\em Model Theory, Arithmetic Geometry and Number Theory}.
The authors would like to thank MSRI for its hospitality and stimulating research environment.
We are also grateful to Fr\'ed\'eric Campana, Bradd Hart, and Matei Toma, for useful discussions.
We thank Jean-Beno\^{i}t Bost for pointing out, and suggesting a correction for,
an error in a previous version of the paper. Finally, we are grateful to the referee for a very thorough reading
of our paper, and for many useful suggestions.

\bigskip
\section{Generic automorphisms and induced structure}
\label{ta-section}

\noindent
In this section we review some of the basic theory of generic automorphisms (details of which can be found in~\cite{ChPi98}) and then prove a general theorem about when a generic automorphism induces as little structure on a definable set as possible.

Suppose $T=T^{\eq}$ is a complete (multi-sorted) stable theory admitting quantifier elimination in a language~$L$.
Consider the expanded language $L_\s:=L\cup\{\sigma\}$ where $\sigma$ is a unary function symbol, and in this language the universal theory
$$T_{\forall,\sigma}:=T_\forall\cup\{\sigma\text{ is an $L$-embedding}\}.$$
Recall that by $TA$ is meant the model companion of $T_{\forall,\sigma}$, when it exists.
In this section we will review what is known about $TA$ and give a characterisation for stable embeddedness in $TA$ of sorts from $L$.

Suppose $TA$ exists.
We work in a sufficiently saturated model $(\UU,\sigma)\models TA$.
So~$\mathbb U$ is a sufficiently saturated model of $T$.
As a general rule we will use notation such as $\tp$ and $\acl$ to refer to the $L$-structure~$\mathbb U$, and decorate with a ``$\sigma$", as in $\tpsigma$ and $\aclsigma$, when we wish to refer to $(\UU,\sigma)$.
The following properties of $TA$ are due to Chatzidakis and Pillay~\cite[$\S3$]{ChPi98}.
\begin{itemize}
\item[(1)]
For any set $A$, $\aclsigma(A)$ is the $\acl$-closure of the inversive closure of $A$.
Here the {\em inversive closure} of $A$ is by definition the set obtained from $A$ by closing off under $\sigma$ and $\sigma^{-1}$.
\item[(2)]\label{qr}
Quantifier reduction: $\tpsigma(a/A)=\tpsigma(b/A)$ if and only if there is an $L_\sigma$-isomorphism from $\aclsigma(Aa)$ to $\aclsigma(Ab)$ that takes $a$ to $b$ and fixes~$A$.
In particular, each completion
of $TA$ is determined by the isomorphism type of $(\acl(\emptyset),\sigma)$.
\item[(3)]\label{ss}
Every completion of $TA$ is simple and 
$$A\ind^{TA}_E B\text{ if and only if }\aclsigma(A)\ind^{T}_{\aclsigma(E)}\aclsigma(B).$$
If $T$ is superstable, then every completion of $TA$ is supersimple.
\end{itemize}
It is not in general true that $TA$ admits elimination of imaginaries, even though we assume that $T$ does.
Hrushovski has shown that for superstable $T$, a certain property of $T$ called {\em $3$-uniqueness} is equivalent to all completions of $TA$ eliminating imaginaries -- this is~\cite[Propositions~4.5 and~4.7]{HrGroupoids}.
However, as Bradd Hart pointed out to us, Hrushovski's arguments do give the following without assuming $3$-uniqueness:
\begin{itemize}
\item[(4)]\label{gei}
Geometric elimination of imaginaries: every element of $(\UU,\sigma)^{\eq}$ is interalgebraic with a finite tuple from~$\UU$. 
\end{itemize}

\begin{proof}
The proof of Proposition~4.7 of~\cite{HrGroupoids} shows that if $e=a/E$ is in $(\UU,\sigma)^{\eq}$, where $a$ is a finite tuple from the home sort and $E$ is a definable equivalence relation, then there exists in the $E$-class of $a$ an element, $a'$, such that $\displaystyle a'\ind^{TA}_{\aclsigma^{\eq}(e)\cap\UU}a$.
It follows that $e\in\aclsigma^{\eq}(a)\cap\aclsigma^{\eq}(a')\subseteq \aclsigma^{\eq}\big(\aclsigma^{\eq}(e)\cap\UU\big)$, as desired.
\end{proof}

\begin{remark}
Because of the potential failure of full elimination of imaginaries, we have to distinguish between $(\UU,\sigma)$, and $(\UU,\sigma)^{\eq}$.
As a general rule we will stay in the real sorts of $(\UU,\sigma)$. In the few cases we pass to $(\UU,\sigma)^{\eq}$ we will say so explicitly.
\end{remark}

Next we discuss the issue of what new structure $TA$ may induce on a given sort beyond that induced by $T$.
A notion relevant to this is stable embeddedness.
Let us recall the definition in general.

\begin{definition}
Working in a sufficiently saturated model of an arbitrary complete theory, a $0$-definable set $S$ is {\em stably embedded} if every definable subset of $S^n$ is definable using parameters from $S$, for all $n>0$.
\end{definition}

In a stable theory such as $T$, every definable set is stably embedded.
But we are interested in stable embeddedness in the (usually unstable) theory $TA$.

Let us make our question more precise.
Suppose $\PP$ is a set of sorts in $\UU$.
We want to know when $\PP$ is stably embedded in $(\UU,\sigma)$, and in that case what structure is induced.
Note that at the very least $(\UU,\sigma)$ induces on $\PP$ the automorphism $\sigma$, as well as all the structure that $\UU$ induces.
That is, if we let $L^{\PP}$ be the language consisting of a predicate symbol for each $L$-formula whose variables belong to sorts in $\PP$, then $(\PP, L^{\PP}_{\sigma})$ is a reduct of the full structure induced on $\PP$ by $(\UU,\sigma)$.
The question is, when is this the entire induced structure?
If this happens $\PP$ will be particularly tractable in $TA$.

\begin{remark}
\label{axiomatisetype}
By standard saturation arguments it is not hard to see that $\PP$ is stably embedded in $(\UU,\sigma)$ with induced structure given by $L^{\PP}_{\sigma}$ (maybe over some additional parameters) if and only if for every finite tuple $a$ from $\PP$, and any model $(M,\sigma)\elex(\UU,\sigma)$, $\tp_{L^{\PP}_{\sigma}}(a/\PP(M))\vdash\tpsigma(a/M)$.
\end{remark}

It turns out that understanding internality and almost internality in $T$ is key to answering this question. 
Given a complete type  $q(x)\in S(A)$, recall that $q(x)$ is {\em $\PP$-internal} if for some $B\supseteq A$, and some $a\models q$ with $\displaystyle a\ind^T_AB$, we have $a\in\dcl\big(B\PP\big)$.
If we replace $\dcl$ by $\acl$ in this definition we get the notion of {\em almost $\PP$-internal}.
The following condition is of central interest to us.

\begin{definition}
\label{finitecoversdef}
We say that $\PP$ {\em internalises finite covers in $T$} if almost $\PP$-internality implies $\PP$-internality.
\end{definition}

Recall that a {\em stationary type} is one that has a unique nonforking extension to any superset. Note that $\tp(a/A)$ is (almost) $\PP$-internal if and only if $\tp(a/\acl(A))$ is (almost) $\PP$-internal. Since 
types over algebraically closed sets in stable theories eliminating imaginaries are stationary, it is enough to consider stationary types in Definition \ref{finitecoversdef}.

\begin{example}
The projective line in $\ccm$ internalises finite covers, while the field of constants in $\dcf_0$ does not.
For the former see  Fact~3.1 of~\cite{moosapillay08}, and for the latter note that the generic type of $\delta(x^2-b)=0$ where $b$ is a fixed non-constant is almost internal but not internal to the field of constants (see~\cite[Lemma~3.1]{pillay06}).
\end{example}

The following characterisation will be useful in practice.

\begin{lemma}
\label{intern-rem}
The set of sorts $\PP$ internalises finite covers in $T$ if and only if for any $M\models T$ and any finite tuple $a\in\PP$,
\begin{equation}
\label{intern}
\acl(Ma)=\dcl\!\big(M,\acl(\PP(M)a)\cap\dcl(\PP)\big)
\end{equation}
\end{lemma}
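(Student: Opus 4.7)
The plan is to prove both implications of the equivalence, with the harder direction being $(\Rightarrow)$.

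For $(\Leftarrow)$, I would assume $(\ref{intern})$ holds at every model $M$ and every finite tuple $a\in\PP$. Given $\tp(c/A)$ almost $\PP$-internal, I extend $A$ to a model $M$ with $M\ind_A c$ so that $\tp(c/M)$ remains almost $\PP$-internal. Using the standard fact that in a stable theory, almost $\PP$-internality over a model is witnessed by a finite tuple from $\PP$ itself, I obtain $c\in\acl(Ma)$ for some $a\in\PP$. Instance $(\ref{intern})$ then yields $c\in\dcl(Md)$ for some finite $d\in\dcl(\PP)$; since $d$ is interdefinable with some $e\in\PP$, we have $c\in\dcl(Me)$, and combined with $M\ind_A c$ this witnesses $\PP$-internality of $\tp(c/A)$.

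For $(\Rightarrow)$, I assume $\PP$ internalises finite covers, fix a model $M$ and $a\in\PP$. The inclusion $\supseteq$ in $(\ref{intern})$ is immediate since $\PP(M)\subseteq M$ gives $\acl(\PP(M)a)\cap\dcl(\PP)\subseteq\acl(Ma)$, and then $\dcl(M,\acl(\PP(M)a)\cap\dcl(\PP))\subseteq\dcl(\acl(Ma))=\acl(Ma)$. For the reverse inclusion, fix $c\in\acl(Ma)$. Then $\tp(c/M)$ is almost $\PP$-internal (via $a$), hence $\PP$-internal by hypothesis, so $c\in\dcl(Me_0)$ for some finite tuple $e_0\in\PP$. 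The real work is to replace $e_0$ by a suitable $d\in\acl(\PP(M)a)\cap\dcl(\PP)$ while preserving $c\in\dcl(Md)$.

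My candidate for $d$ is the canonical parameter, in $\UU^{\eq}=\UU$, of the stationary type $\tp(e_0/Mc)$. Any realisation $e_1$ of $\tp(e_0/Mc)$ satisfies the same $M$-formula isolating $c$ over $Me_0$, so $c\in\dcl(Me_1)$; consequently $c$ is determined by $M$ together with the code $d$ of the type, giving $c\in\dcl(Md)$. Stable embeddedness of $\PP$ in $T$ and the fact that $e_0\in\PP$ make $d$ interdefinable with a finite $\PP$-tuple, so $d\in\dcl(\PP)$; by definition of canonical parameter $d\in\dcl(Mc)\subseteq\acl(Ma)$. The hard part will be the descent from $\acl(Ma)$ down to $\acl(\PP(M)a)$: I plan to invoke the standard fact for stably embedded $\PP$ in a stable theory, that an element of $\dcl(\PP)$ algebraic over $Ma$ is already algebraic over $\dcl(Ma)\cap\PP$; since $M$ is a model and $a\in\PP$, this intersection sits inside $\acl(\PP(M)a)$. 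Once this descent is in hand, the remaining steps are routine bookkeeping with standard properties of stable theories.
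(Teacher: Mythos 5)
Your proof is correct and follows essentially the same route as the paper's: both directions reduce, via the standard fact that over a model an internal type is (inter)definable with a finite tuple from $\dcl(\PP)$, to descending the parameters from $M$ to $\PP(M)$ by stable embeddedness. Your canonical-parameter construction of $d$ is just an unpacking of the interdefinability fact that the paper cites directly (to be safe, take the canonical base of a \emph{strong} type, or the code of the fibre $f^{-1}(c)$ of the $M$-definable function witnessing $c\in\dcl(Me_0)$, since $\tp(e_0/Mc)$ need not be stationary).
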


\begin{proof}
If $\tp(b/A)$ is almost $\PP$-internal then there is a model $M\supseteq A$ independent from $b$ over $A$, and a finite tuple $a\in\PP$, such that $b\in\acl(Ma)$.
Then~(\ref{intern}) implies that $b\in\dcl(M\PP)$, showing that $\tp(b/A)$ is internal to $\PP$.

Conversely, assume that $\PP$ internalises finite covers and suppose $b\in\acl(Ma)$, where $a$ is a finite tuple from $\PP$. 
Then $\tp(b/M)$ is almost $\PP$-internal and hence $\PP$-internal.
As we are working over a model, and $T=T^{\eq}$, we get that $b$ is interdefinable  over $M$ with a finite tuple $c$ from $\dcl(\PP)$.
So $b\in\dcl(Mc)$.
On the other hand, $c\in\dcl(Mb)\subseteq\acl(Ma)$.
But as $a$ and $c$ are finite tuples from $\dcl(\PP)$, it follows by stability (namely, the fact that $\PP$ is stably embedded in $T$) that $c\in\acl(\PP(M)a)$.
Hence $b\in\dcl\!\big(M,\acl(\PP(M)a)\cap\dcl(\PP)\big)$, as desired.
\end{proof}

Now for the main result of this section which answers the question: when exactly is the full structure induced on $\PP$ by $(\mathbb U,\sigma)$ given by $(\PP,L^{\PP}_{\sigma})$?

\begin{proposition}\label{intern-prop}\mbox{}
\begin{itemize}
\item[(i)]
If $\PP$ internalises finite covers in $T$ then $\PP$ is stably embedded in $(\UU,\sigma)$, and the induced structure is that of $L^{\PP}_{\sigma}$, possibly with some parameters from $\PP(M)$ for any $(M,\sigma)\models T_\sigma:=T\cup\{\sigma\text{ is an $L$-automorphism}\}$.
\item[(ii)]
Conversely, assume that $\PP$ is stably embedded in $(\UU,\sigma)$ 
with induced structure given by~$L^{\PP}_{\sigma}$ (over parameters). 
Assume in addition that $\sigma$ restricted to $\acl(\emptyset)$ is the identity. Then $\PP$ internalises finite covers in $T$.
\end{itemize}
\end{proposition}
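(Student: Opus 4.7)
By Remark~\ref{axiomatisetype}, it suffices to show that for every finite tuple $a\in\PP$ and every $(M,\sigma)\elex(\UU,\sigma)$, $\tp_{L^{\PP}_\sigma}(a/\PP(M))\vdash\tpsigma(a/M)$. Given $a,a'\in\PP$ with the same $L^{\PP}_\sigma$-type over $\PP(M)$, the quantifier reduction~(2) reduces this to constructing an $L_\sigma$-isomorphism $\tau:\aclsigma(Ma)\to\aclsigma(Ma')$ fixing $M$ with $\tau(a)=a'$. By stable embeddedness of $\PP$ in the stable theory~$T$, the hypothesis implies that $\langle a\rangle_\sigma$ and $\langle a'\rangle_\sigma$ have the same $L$-type over $M$. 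Applying Lemma~\ref{intern-rem} to each finite tuple $a^{[n]}:=(\sigma^{-n}(a),\ldots,\sigma^n(a))$ and taking unions gives $\aclsigma(Ma)=\dcl(M,D)$ where $D:=\dcl(\PP)\cap\acl(\PP(M)\langle a\rangle_\sigma)$; likewise $\aclsigma(Ma')=\dcl(M,D')$. The $L^{\PP}_\sigma$-data extends naturally (using stable embeddedness of~$\PP$ in~$T$, applied inside $\PP$) to a $\sigma$-equivariant $L$-elementary bijection $\tilde\Phi:D\to D'$ over $\PP(M)$. One then defines $\tau$ by $g(m,d)\mapsto g(m,\tilde\Phi(d))$ for $0$-definable partial $g$, $m\in M$, $d\in D$. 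Its well-definedness and $L$-elementarity over~$M$ again follow from stable embeddedness of~$\PP$ (lifting $\tp(D/\PP(M))=\tp(D'/\PP(M))$ to types over~$M$), while commutation with $\sigma$ follows from the $\sigma$-equivariance of $\tilde\Phi$ and the identity $\sigma(g(m,d))=g(\sigma(m),\sigma(d))$. The crux is this commutation check, which is precisely where the internalisation of finite covers is used: it ensures that $\aclsigma(Ma)$ is controlled by its $\dcl(\PP)$-part, on which $\sigma$ is determined by the $L^{\PP}_\sigma$-type.

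\textbf{Part (ii).} I argue by contrapositive. Suppose $\PP$ does not internalise finite covers. By Lemma~\ref{intern-rem}, there exist $M\models T$, $a\in\PP$, and $b\in\acl(Ma)$ with $b\notin\dcl(M,C)$, where $C:=\dcl(\PP)\cap\acl(\PP(M)a)$. Since $a\in C$ we have $b\notin\dcl(Ma)$, so the $L$-conjugacy class $\{b=b_1,\ldots,b_k\}$ of~$b$ over $Ma$ has $k\ge 2$, and $\aut_L(\acl(Ma)/Ma)$ acts transitively and nontrivially on it. The plan is to produce two realisations of this datum in $(\UU,\sigma)$ that agree in $L^{\PP}_\sigma$-type over a suitable model but differ in $L_\sigma$-type, contradicting Remark~\ref{axiomatisetype}.

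Using the hypothesis $\sigma\upharpoonright\acl(\emptyset)=\id$, embed $(M,\id_M)$ as an $L_\sigma$-substructure of some sufficiently saturated $(M',\sigma')\elex(\UU,\sigma)$, so $M\subseteq\fix(\sigma')$. By property~(2) and the existential closedness of~$TA$, realise two tuples $a_1,a_2\in\PP(\UU)\cap\fix(\sigma)$, both satisfying the (unique) nonforking extension of $\tp(a/M)$ to~$M'$, but with different induced $\sigma$-actions on $\aclsigma(M'a_i)=\acl(M'a_i)$: on $a_1$ let $\sigma$ act as the identity on $\aut_L(\acl(M'a_1)/M'a_1)$, and on $a_2$ let $\sigma$ act as some nontrivial element $\tau$ of $\aut_L(\acl(M'a_2)/M'a_2)$ (which exists by transitivity and $k\ge 2$). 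Since both $a_i$ are $\sigma$-fixed and have the same $L$-type over $\PP(M')\subseteq M'$, they share the same $L^{\PP}_\sigma$-type over $\PP(M')$; yet their $L_\sigma$-types over $M'$ differ, because any $L_\sigma$-isomorphism $\aclsigma(M'a_1)\to\aclsigma(M'a_2)$ over $M'$ sending $a_1\mapsto a_2$ would have to conjugate the trivial Galois-theoretic $\sigma$-action to $\tau$, forcing $\tau=\id$. The delicate step is producing the pair $a_1,a_2$ with the prescribed distinct $\sigma$-actions on the finite Galois cover; this is where both the $\acl(\emptyset)$ hypothesis (to place $(M,\id_M)$ inside $(\UU,\sigma)$) and the existential closedness of~$TA$ (to realise the two $L_\sigma$-types) enter.
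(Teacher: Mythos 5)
Your part (i) is essentially the paper's own argument: reduce via Remark~\ref{axiomatisetype} and quantifier reduction to building an $L_\sigma$-isomorphism $\aclsigma(Ma)\to\aclsigma(Ma')$, use Lemma~\ref{intern-rem} to write $\aclsigma(Ma_i)=\dcl(M,A_i)$ with $A_i$ inside $\dcl(\PP)$, and use stable embeddedness of $\PP$ in $T$ to lift the $L^{\PP}_\sigma$-isomorphism $A_1\to A_2$ to an $L$-isomorphism over $M$ that automatically commutes with $\sigma$. No complaints there.

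Part (ii) follows the same overall strategy as the paper (two tuples with the same $L^{\PP}_\sigma$-type over a model but different $L_\sigma$-types), but it has a genuine gap at the decisive step. You claim that since $a_1,a_2$ are $\sigma$-fixed and have the same $L$-type over $\PP(M')$, they have the same $L^{\PP}_\sigma$-type over $\PP(M')$. That is not enough: by quantifier reduction in $T^{\PP}A$ (whose existence, and the fact that $(\PP,\sigma)\models T^{\PP}A$, itself requires proof --- the paper's Fact~\ref{Fact:TPA}), the $L^{\PP}_\sigma$-type of $a_i$ over $\PP(M')$ is governed by the $\sigma$-action on all of $\acl^{\PP}(M'a_i)$, not just on $a_i$. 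Your nontrivial element $\tau$ is taken from $\aut_L(\acl(M'a_2)/M'a_2)$, which may move points of $\acl^{\PP}(M'a_2)$; if it does, the two $L^{\PP}_\sigma$-types can legitimately differ and no contradiction with stable embeddedness arises. The automorphism must be taken to fix $M'\cup\acl^{\PP}(M'a_2)$ pointwise --- a nontrivial such element exists exactly because Lemma~\ref{intern-rem} gives $b\notin\dcl(M,\acl^{\PP}(Ma))$, i.e.\ $\gal(M\cup\acl^{\PP}(Ma))$ is nontrivial, and this is where the failure of internalisation is actually used. Relatedly, the step you flag as ``delicate'' but do not carry out is a real obstruction: to realise $a_2$ in $(\UU,\sigma)$ you must check that the union of (an elementary permutation extending) $\sigma$ on $M'\cup\acl^{\PP}(M'a)$ with the Galois element $\rho$ on $\acl(Ma)$ is $L$-elementary. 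Since $\rho$ is only elementary over the small set $M\cup\acl^{\PP}(Ma)$, this requires showing that types of elements of $\acl(M\cup\acl^{\PP}(Ma))$ over $M\cup\acl^{\PP}(Ma)$ isolate their types over $M'\cup\acl^{\PP}(M'a)$ --- the content of the paper's Claim~\ref{Cl:Mult}, proved by a multiplicity/isolation argument using $a\ind_M M'$. Without these two ingredients the construction of the pair $a_1,a_2$ and the final contradiction do not go through.
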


\begin{proof}
Since $T=T^{\eq}$, and $\dcl(\PP)$ internalises finite covers in $T$ if and only if $\PP$ does, we may also assume that $\PP=\PP^{\eq}$. 

Assume first that $\PP$ internalises finite covers in $T$. 
\begin{claim}\label{Cl:Tsigma}
Let $(M,\sigma)\models T_\sigma$ be contained in $(\UU,\sigma)$, and let $a_1,a_2$ be finite tuples from $\PP$ such that $\tp_{L^{\PP}_{\sigma}}(a_1/\PP(M))=\tp_{L^{\PP}_{\sigma}}(a_2/\PP(M))$. Then 
$\tpsigma(a_1/M)=\tpsigma(a_2/M)$. 
\end{claim}
\begin{proof}[Proof of Claim \ref{Cl:Tsigma}]
By assumption, there is an $L^{\PP}_\sigma$-automorphism of $\PP$  that fixes $\PP(M)$ pointwise and takes $a_1$ to $a_2$.
For $i=1,2$, set
$$A_i:=\aclsigma\big(\PP(M)a_i\big)\cap\PP=\acl\big(\PP(M)(\sigma^{n}a_i)_{n\in\mathbb Z}\big)\cap\PP$$
We obtain an $L^{\PP}_\sigma$-isomorphism $f:A_1\to A_2$ that fixes $\PP(M)$ pointwise and takes $a_1$ to $a_2$.
By stable embeddedness of $\PP$ in $\UU$, the $L^{\PP}$-isomorphism $f$ extends to an $L$-isomorphism
$$F:\dcl(MA_1)\to\dcl(M A_2)$$
fixing $M$ pointwise.
Note that $M\cup A_i$, and hence $\dcl(MA_i)$, are $\sigma$-invariant.
Moreover, $F$ is in fact an $L_{\sigma}$-isomorphism; this is because $F$ and $\sigma$ commute on $M$ since $F|_M=\id$ and they commute on $A_1$ as $F|_{A_1}=f$ which is an $L^{\PP}_\sigma$-isomorphism.
On the other hand, by Lemma~\ref{intern-rem},
$$\dcl(MA_i)=\acl\big(M(\sigma^{n}a_i)_{n\in\mathbb Z}\big)=\aclsigma(Ma_i)$$
since $\PP$ internalises finite covers in $T$.
(Note that Lemma~\ref{intern-rem} applies to infinite tuples by the finite character of $\acl$ and $\dcl$.)
So by quantifier reduction in $TA$, namely~(2) of page~\pageref{qr}, $F$ witnesses $\tpsigma(a_1/M)=\tpsigma(a_2/M)$, as desired.
\end{proof}

By Remark~\ref{axiomatisetype}, Claim~\ref{Cl:Tsigma} applied to models of $TA$ shows stable embeddedness of $\PP$ in $(\UU,\sigma)$ with induced structure given by $L^{\PP}_{\sigma}$ (over parameters). Statement (i) follows, using Claim~\ref{Cl:Tsigma} for arbitrary models of $T_\sigma$.

\medskip

To prove (ii), we now assume that $\sigma$ restricted to $\acl(\emptyset)$ is the identity, and that 
$\PP$ is stably embedded in $(\UU,\sigma)$ 
with induced structure given by~$L^{\PP}_{\sigma}$, say over parameters from $\PP(M)$, where $(M,\sigma)\elex(\UU,\sigma)$.
Suppose for contradiction that $\PP$ does not internalise finite covers in $T$. By the characterisation given in Lemma \ref{intern-rem}, there 
is $M_0\models T$ and a finite tuple $b\in\PP$ such that 
\begin{equation}\label{eq:non-intern}
\acl(M_0b)\supsetneq\dcl\big(M_0,\acl(\PP(M_0)b)\cap\PP\big).
\end{equation}

By universality of $(\UU,\sigma)$ and as $\sigma\upharpoonright_{\acl(\emptyset)}=\id$, we may assume that $(M_0,\id)\subseteq(\UU,\sigma)$. Increasing $M$ and conjugating $b$ if necessary, we may assume in addition 
that $M_0\subseteq M$ and $b\ind_{M_0} M$.

Note that it follows from stable embeddedness of $\PP$ in $T$ that for any $N\models T$ we have $\acl(\PP(N)b)\cap\PP=\acl(Nb)\cap\PP$. We will freely use this equality in what follows and denote the set 
$\acl(Nb)\cap\PP$ by $\acl^{\PP}(Nb)$.

\begin{claim}\label{Cl:Mult}
Let $c\in\acl(M_0\cup\acl^{\PP}(M_0b))$. Then $$\tp(c/M_0\cup\acl^{\PP}(M_0b))\vdash\tp(c/M\cup\acl^{\PP}(Mb)).$$
\end{claim}
\begin{proof}[Proof of Claim \ref{Cl:Mult}]
Let $l$ be the number of conjugates of $c$ over $M\cup\acl^{\PP}(Mb)$. There are $L$-formulas $\psi(z,x,w)$ and $\chi(y,z,w)$, and a tuple $m\in M$ such that 
\begin{itemize}
\item[(I)] whenever $\psi(d',b',m')$ holds, we have $d'\in\acl(b'm')$; 
\item[(II)] $T\models\forall z,w\,\exists^{\leq l}y\,\chi(y,z,w)$, and
\item[(III)] $\tp(bc/M)\vdash\exists z\in\PP[\psi(z,x,m)\wedge\chi(y,z,m)]$.
\end{itemize}
Since $bc\ind_{M_0}M$, by (III) there is $m_0\in M_0$ such that 
$$\tp(bc/M_0)\vdash\exists z\in\PP[\psi(z,x,m_0)\wedge\chi(y,z,m_0)]$$
Choose $d_0\in\PP$ such that $\models\psi(d_0,b,m_0)\wedge\chi(c,d_0,m_0)$. Thus, 
$d_0\in\acl^{\PP}(M_0b)$ by (I). Moreover, as $\models\chi(c,d_0,m_0)$, we infer from (II) that $\tp(c/M\cup\acl^{\PP}(Mb))$ is isolated by $\chi(y,d_0,m_0)$, a 
formula over $M_0\cup\acl^{\PP}(M_0b)$. This proves the claim.
\end{proof}

We now claim that there exist finite tuples $b_1,b_2\in\PP$ such that

\begin{itemize}
%\item[(0)] \tp(b_i/M)=\tp(b/M)$ for $i=1,2$;
\item[(1)]
$\sigma\upharpoonright_{\acl^{\PP}(M_0b_i)}=\id$ for $i=1,2$;
\item[(2)] there is an $L_\sigma$-automorphism $F:(\acl^{\PP}(Mb_1),\sigma)\cong(\acl^{\PP}(Mb_2),\sigma)$ satisfying $F(b_1)=b_2$ and $F\upharpoonright_{\PP(M)}=\id$, and
\item[(3)]
$\sigma\upharpoonright_{\acl(M_0b_1)}=\id$ but $\sigma\upharpoonright_{\acl(M_0b_2)}\neq\id$.
\end{itemize}

%By the characterisation given in Lemma~\ref{intern-rem}, there is $M\models T$ and a finite tuple $a\in\PP$ such that $\acl(Ma)\supsetneq\dcl\big(M,\acl(\PP(M)a)\cap\PP\big)$. Using that $\sigma\upharpoonright_{\acl(\emptyset)}=\id$ and saturation of $(\UU,\sigma)$ we may find $a_1,a_2\in\PP$ such that, letting $A_i:=\acl(Ma_i)$ and $B_i:=\dcl\big(M,\acl^-(\PP(M)a_i)\cap\PP\big)$ for $i=1,2$, the following properties hold: \begin{itemize} \item $\tp(a_i/M)=\tp(a/M)$ for $i=1,2$; \item $\sigma$ restricted to $A_1$ and to $B_2$ is the identity, and $\sigma\upharpoonright_{A_2}\neq\id$;\end{itemize}

To prove this, first note that the permutation $\tau'$ of the set $M\cup\acl^{\PP}(M_0b)$ given by $\tau'\upharpoonright_M=\sigma$ and $\tau'\upharpoonright_{\acl^{\PP}(M_0b)}=\id$ is elementary, as 
$\sigma\upharpoonright_{M_0}=\id$ and $\acl^{\PP}(M_0b)\ind_{M_0}M$.
Next, choose an arbitrary elementary permutation $\tau$ of $M\cup\acl^{\PP}(Mb)$ extending~$\tau'$. Let $\rho\in\gal(M_0\cup\acl^{\PP}(M_0b))$, i.e., $\rho$ is 
an elementary permutation of $\acl(M_0b)$ fixing pointwise the set $M_0\cup\acl^{\PP}(M_0b)$.
It then follows from Claim \ref{Cl:Mult} that the map $\tau\cup\rho$ is an elementary permutation of $B:=M\cup\acl^{\PP}(Mb)\cup\acl(M_0b)$. 

For $\rho_1=\id\in\gal(M_0\cup\acl^{\PP}(M_0b))$, let $f_1:(B,\tau\cup\rho_1)\hookrightarrow (\UU,\sigma)$ be an $M$-embedding and put $b_1=f_1(b)$. Now let $\rho_2\in\gal(M_0\cup\acl^{\PP}(M_0b))$ be 
different from the identity. Such a map exists by \eqref{eq:non-intern}. We may choose an 
$M$-embedding $f_2:(B,\tau\cup\rho_2)\hookrightarrow (\UU,\sigma)$ and put $b_2=f_2(b)$. Properties~(1)--(3) hold by construction.

\smallskip

Next, we claim that $\tp_{L^{\PP}_{\sigma}}(b_1/\PP(M))=\tp_{L^{\PP}_{\sigma}}(b_2/\PP(M))$.
By~(2) this follows from quantifier reduction (property (2) on page \pageref{qr}) in $T^{\PP}A$ where $T^{\PP}=\operatorname{Th}_{L^{\PP}}(\PP)$, once we know that $T^{\PP}A$ exists and that $(\PP,\sigma)$ is a model of it.
The latter facts are widely known, but as we could not find a reference we prove it as Fact~\ref{Fact:TPA} of the appendix.

On the other hand, by~(3) we clearly have $\tpsigma(b_1/M_0)\neq\tpsigma(b_2/M_0)$, and so in particular $\tpsigma(b_1/M)\neq\tpsigma(b_2/M)$.
This contradicts the assumption that $\PP$ is stably embedded in $(\UU,\sigma)$ with induced structure given by~$L^{\PP}_{\sigma}$ (over parameters from $\PP(M)$).
\end{proof}

We expect that the following question has a positive answer.

\begin{question}
Does Proposition \ref{intern-prop}(ii) hold without the assumption on $\sigma\upharpoonright_{\acl(\emptyset)}$?
\end{question}

In the special case where $\dcl(\emptyset)$ is a model of $T$ (which is the case for the theory $T=\ccm$ that we consider in the next section), we may infer from Proposition \ref{intern-prop} the following clean characterisation.

\begin{corollary}\label{Cor:dcl-model-stab-emb}
Suppose that $\dcl(\emptyset)\models T$. Then the following are equivalent:
\begin{itemize}
\item $\PP$ internalises finite covers in $T$;
\item $\PP$ is stably embedded in $TA$, with induced structure given by~$L^{\PP}_{\sigma}$ over parameters;
\item $\PP$ is stably embedded in $TA$, with induced structure given by~$L^{\PP}_{\sigma}$ over the empty set. 
\end{itemize}
\end{corollary}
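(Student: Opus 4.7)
The plan is to derive the corollary directly from Proposition~\ref{intern-prop}, after making the preliminary observation that the hypothesis $\dcl(\emptyset) \models T$ forces $\acl(\emptyset) = \dcl(\emptyset)$, and hence $\sigma\restriction_{\acl(\emptyset)} = \id$ in every model of $TA$. This bridges the gap between the hypotheses of parts (i) and (ii) of the proposition.

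To verify the preliminary observation, suppose $a \in \acl(\emptyset)$ and pick an $L$-formula $\phi(x)$ over $\emptyset$ with exactly $n$ realisations in $\UU$, one of which is $a$. The sentence $\exists^{=n} x\,\phi(x)$ holds in $\UU$, hence in $M:=\dcl(\emptyset)$ by elementarity; so $M$ contains $n$ realisations of $\phi$, which by elementarity again must coincide with the $n$ realisations in $\UU$. Thus $a \in M = \dcl(\emptyset)$. Since any automorphism of $\UU$ fixes $\dcl(\emptyset)$ pointwise, and we have just shown this equals $\acl(\emptyset)$, the claim follows.

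With this in hand, the three implications become routine. The implication from the third bullet to the second is trivial. For the second to the first, Proposition~\ref{intern-prop}(ii) applies, since its hypothesis on $\sigma\restriction_{\acl(\emptyset)}$ is now verified. For the first to the third, I apply Proposition~\ref{intern-prop}(i) with $(M,\sigma) := (\dcl(\emptyset),\id)$, which is a model of $T_\sigma$ by hypothesis; this yields stable embeddedness of $\PP$ in $(\UU,\sigma)$ with induced structure given by $L^{\PP}_{\sigma}$ and parameters drawn from $\PP(\dcl(\emptyset)) = \PP \cap \dcl(\emptyset)$. Since each such parameter is $\emptyset$-definable in $L$, and therefore also $\emptyset$-definable in $L^{\PP}_{\sigma}$, it may be quantified out of any defining formula, leaving the induced structure given by $L^{\PP}_\sigma$ over the empty set.

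I do not foresee any substantive obstacle: the argument is essentially a repackaging of Proposition~\ref{intern-prop} using the elementarity lemma above. The only minor point to be careful about is the last elimination-of-parameters step, which relies on the trivial but necessary observation that $L$-definability over $\emptyset$ of an element of $\PP$ entails its $L^{\PP}$-definability over $\emptyset$, by virtue of $L^{\PP}$ containing a predicate symbol for each $L$-formula whose variables lie in sorts in $\PP$.
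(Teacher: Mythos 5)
Your proposal is correct and is exactly the intended derivation: the paper states the corollary as an immediate consequence of Proposition~\ref{intern-prop} without writing out the details, and your argument supplies precisely those details (that $\dcl(\emptyset)\models T$ together with quantifier elimination forces $\acl(\emptyset)=\dcl(\emptyset)$, hence $\sigma\restriction_{\acl(\emptyset)}=\id$, so that part (ii) applies; and that taking $(M,\sigma)=(\dcl(\emptyset),\id)$ in part (i) lets the parameters be eliminated). No gaps.
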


\bigskip
\section{$\ccma$ and the failure of elimination of imaginaries}
\label{not3unique}

\noindent
We now specialise to the case when $T=\ccm$.
Let us first review some of our conventions and notations regarding this theory.

By a {\em complex variety} we will mean a reduced and irreducible complex analytic space.
We say that a property holds {\em generally} if it holds outside a proper complex analytic subset.
Recall that $\A$ is the structure where the sorts are the compact complex varieties and the basic relations are the complex analytic subsets of the cartesian products of the sorts.
The first-order theory of $\A$ is denoted by $\ccm$.
One of the sorts of $\A$ is the projective line $\mathbb P(\CC)$.
The complex field $(\CC,+,\times)$ is definable in this sort, and in fact the full induced structure on $\mathbb P(\CC)$, which by Chow's Theorem is just that of the algebraic sets over $\CC$, is bi-interpretable with the complex field.

We will usually work in a sufficiently saturated elementary extension $\A'$ of $\A$.
The corresponding elementary extension of the complex field, i.e. the interpretation in $\A'$ of the complex field, will be denoted by $(K,+,\times)$.
It is up to definable isomorphism the only infinite field definable in $\A'$, see~\cite[Corollary~4.8]{ret}.

The complex analytic sets induce a noetherian topology on the sorts of $\A'$ too: if $X$ is a compact complex variety then by a {\em closed} subset of $X(\A')$ is meant a set of the form $F:=Y(\A')_s$ where $Y$ is a complex analytic subset of $S\times X$ for some compact complex variety $S$, and $s\in S(\A')$.
If $s$ comes from a given set of parameters $B$, then we also say that $F$ is {\em $B$-closed}.
A $B$-closed set is {\em $B$-irreducible} if it cannot be written as the union of two proper $B$-closed sets.
If $F$ is a $B$-irreducible $B$-closed set, a {\em generic point of $F$ over $B$} is an element $c\in F$ that is not contained in any proper $B$-closed subset. In this case we also say that $F$ is the {\em locus of $c$ over $B$}, denoted by $F=\loc(c/B)$.
We say $F$ is {\em irreducible} if it is absolutely irreducible in the sense that it cannot be written as a union of two proper closed subsets (over any parameters).
Even though the closed subsets of $X(\A')$ are not themselves complex analytic spaces, it still makes sense to talk about their ``complex" {\em dimension} since in the standard model the complex dimension of a complex analytic set is definable in parameters.
For proofs of the claims made here, and for more details on this ``nonstandard Zariski" topology, see~\cite[$\S$2]{ret}.

\medskip

In the introduction we gave motivation for studying $\ccm_{\forall,\sigma}$, and hence its model companion $\ccma$, coming from meromorphic dynamics.
The first two authors and Gavrilovich showed in~\cite{bgh} that $\ccma$ exists.
(In fact, they showed the existence of $TA$ for $T$ the theory of any $\omega_1$-compact noetherian topological structure with quantifier elimination and in which  irreducibility is definable.)
So $\ccma$ satisfies properties~(1) through~(4) discussed at the beginning of Section~\ref{ta-section}.
Moreover, as $\acl(\emptyset)=\dcl(\emptyset)=\mathcal A$, quantifier reduction gives us that $\ccma$ is complete.

The goal of this section is to prove that while $\ccma$ admits geometric elimination of imaginaries, it does not eliminate imaginaries outright.
As we have mentioned before, by work of Hrushovski this reduces to proving the following theorem about $\ccm$.

\begin{theorem}\label{thm:non3unique}
$\ccm$ does not satisfy $3$-uniqueness in the sense of~\cite[$\S4$]{HrGroupoids}. 
That is, working in a sufficiently saturated $\mathcal A'\models\ccm$,
there exist elements $b,x,y,z$ with $x,y,z$ independent over $b$ and such
    that 
    \[ \operatorname{acl}(bxy) \cap \operatorname{dcl}\big(\operatorname{acl}(bxz),\operatorname{acl}(byz)\big) \neq \operatorname{dcl}\big(\operatorname{acl}(bx),\operatorname{acl}(by)\big) .\]
\end{theorem}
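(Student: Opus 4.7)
The plan is to exhibit an explicit geometric configuration in $\mathcal A'$ that witnesses the failure of 3-uniqueness. Following Hrushovski's analysis~\cite{HrGroupoids}, failures of 3-uniqueness correspond to non-split definable 2-cocycles in the generalised imaginary sorts; the strategy is therefore to locate such a cocycle in the specifically non-algebraic part of compact complex geometry, since the stated goal is to distinguish $\ccm$ from $\operatorname{ACF}$, $\dcf$, and $\operatorname{SCF}$ where the property holds. Concretely, I would aim to produce an element $u \in \acl(bxy)$ which lies in $\dcl(\acl(bxz), \acl(byz))$ by a symmetric argument using the third variable $z$, but which is not in $\dcl(\acl(bx), \acl(by))$.

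For the construction I would start with a simple non-algebraic compact complex torus $T$ of dimension at least two, equipped with a finite-cover datum such as a non-trivial 2-torsion element $\tau$ or, more flexibly, a non-split isogeny $\pi : \tilde T \to T$ with finite kernel. The parameter $b$ would encode this cover (and, if necessary, enough base data to make the cocycle non-trivial), and $x,y,z$ would be independent generic points of $T$ over $b$. The candidate finite imaginary $u$ would be built from a natural twisted construction applied to the pair $(x,y)$: for instance a definable finite subset of $\tilde T$ obtained from lifts of $x+y$, whose internal ambiguity is governed by the extension cocycle of $\pi$. By design $u \in \acl(bxy)$, and the fact that the ambiguity can be resolved symmetrically using $z$ -- because the ``sign'' data attached to $(x,z)$ and $(y,z)$ share a common reference point -- should give $u \in \dcl(\acl(bxz),\acl(byz))$.

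The remaining step is to show that $u \notin \dcl(\acl(bx),\acl(by))$, which I would verify by exhibiting an $L$-automorphism of $\acl(bxy)$ fixing $\acl(bx)\cup\acl(by)$ pointwise but moving $u$, using the description of algebraic closures in $\ccm$ via Galois theory of finite ramified analytic covers. I expect this nontriviality step to be the main obstacle: one must check that the chosen 2-cocycle is genuinely not a coboundary over the pair-closures, and this is precisely where non-algebraicity of $T$ is essential. In the algebraic setting the relevant cohomological obstructions are already detected by the (meromorphic) function field and so become coboundaries over any algebraically closed substructure containing the base; for a simple non-algebraic torus the function field is trivial while the $\ccm$-induced structure on $T$ is far richer, so a suitably chosen finite cover can produce a class that no $1$-cochain at the pairwise level can kill.
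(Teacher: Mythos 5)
Your overall strategy --- find a finite-cover datum over a nonalgebraic space whose lifting ambiguity on a pair $(x,y)$ can be resolved symmetrically using $z$ but not from $\acl(bx)$ and $\acl(by)$ alone --- has the right shape, but the concrete construction you propose fails, and the reason is instructive. If $\pi:\tilde T\to T$ is an isogeny of complex tori (and a simple nonalgebraic torus admits essentially no other finite covers of its generic points: this is the content of Proposition~\ref{toruscover}, which shows such tori \emph{internalise} finite covers), then for $x,y$ independent generics the fibre $\pi^{-1}(x+y)$ decomposes as $\{\tilde x_0+\tilde y_0+t : t\in\ker\pi\}$, where $\tilde x_0\in\pi^{-1}(x)\subseteq\acl(bx)$, $\tilde y_0\in\pi^{-1}(y)\subseteq\acl(by)$, and every $t\in\ker\pi$ is a point of the standard model and hence a named constant of $\ccm$. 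So every lift of $x+y$, and every definable finite set of such lifts, already lies in $\dcl\big(\acl(bx),\acl(by)\big)$: your cocycle is a coboundary at the pairwise level for the trivial reason that the covering data of each singleton is already present in its algebraic closure. Your closing heuristic is also backwards: what is needed is not that the induced structure on the nonalgebraic space be richer, but that it be \emph{poorer} --- specifically, that generic singletons of the relevant total space have no finite covers at all, i.e.\ $\acl=\dcl$ on generic points, so that the new algebraic elements appearing at the level of pairs genuinely cannot be traced back to the singletons.

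That is exactly what the paper arranges. It takes a strongly minimal, simply connected K3 surface $X$ with $H^1(X,\mathcal O_X^*)\cong\mathbb Z$ (Fact~\ref{fact:K3}) and a holomorphic $\mathbb C^*$-bundle $P\to X$ whose class generates this group; simple connectedness, the absence of curves, and the indivisibility of $[P]$ (via Lemma~\ref{lem:openH1}) are used to prove $\acl(a)=\dcl(a)$ for $a$ generic in $X$ and in the compactified total space $P^{\operatorname{cl}}$ (Lemma~\ref{lem:Bacldcl} and Proposition~\ref{prop:Xacldcl}). The witness is then multiplicative rather than additive: with $b$ generic in $X$, $a$ generic in the $K^*$-torsor $P_b$, and $\phi,\theta$ independent generics of $K^*$ over $a$, the triple $(a,\phi a,\theta a)$ is independent over $b$, and an $n$th root $\phi'$ of $\phi$ lies in $\acl(\phi a,a)\cap\dcl\big(\acl(\phi a,\theta a),\acl(a,\theta a)\big)$ (via $\phi'=(\phi'/\theta')\cdot\theta'$) but not in $\dcl\big(\acl(\phi a),\acl(a)\big)=\dcl(a,\phi)$. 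To salvage a torus-flavoured version you would need a torsor with no definable trivialisation whose total space has no finite covers of its generic points; producing such a thing inside a simple nonalgebraic torus runs directly against Proposition~\ref{toruscover}, whereas the $\mathbb C^*$-bundle over the K3 surface is precisely such an object.
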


Our witness to the failure of $3$-uniqueness will use some of the theory of holomorphic line bundles on complex manifolds.
We suggest~\cite[$\S1.1$]{griffithsharris} for more details and as a general reference for this material.

In fact, it is convenient for us to work instead with the corresponding $\CC^*$-bundles.
Recall that a {\em holomorphic $\CC^*$-bundle} on a complex manifold $M$ is a complex manifold $P$ with a holomorphic surjective map $P\to M$ for which there exists a (euclidean) open cover $\{U_i\}$ of $M$ such that $P|_{U_i}$ is biholomorphic to $U_i\times \mathbb C^*$ over $U_i$, and such that the corresponding biholomorphisms $U_i\cap U_j\times\mathbb C^*\to U_i\cap U_j\times\mathbb C^*$ are of the form $(x,p)\mapsto(x,g_{ij}(x)p)$ for some (holomorphic) $g_{ij}:U_i\cap U_j\to\mathbb C^*$.
One obtains a uniform holomorphic group action of $\CC^*$ on the fibres of $P\to M$.
Since the action by multiplication of $\CC^*$ on $\CC^*$ extends uniquely and holomorphically to $\CC$, an alternative description of $\CC^*$-bundles is that they are holomorphic line bundles with their zero sections removed.
Moreover, as that action extends also uniquely and holomorphically to $\mathbb P(\CC)$, we can embed $P$ as a Zariski open subset of a projective line bundle over $M$, which we will denote by $ P^{\operatorname{cl}}\to M$.
Hence, if $M$ is compact then $P\to M$, as well as the action of $\CC^*$ on the fibres, is definable in $\ccm$.
The set of holomorphic $\CC^*$-bundles over $M$, up to a natural notion of isomorphism, can be identified with the cohomology group $H^1(M,\mathcal O_M^*)$, where $\mathcal O_M^*$ is the sheaf of nonvanishing holomorphic functions on $M$.
Under this identification the group structure induced on the set of holomorphic $\CC^*$-bundles is given by pointwise multiplication of the $g_{ij}$'s with respect to a common choice of $\{U_i\}$.  
We will denote the element of $H^1(M,\mathcal O_M^*)$ corresponding to $P$ by $[P]$.

The following fact was pointed out to us by Matei Toma.

\begin{fact}
\label{fact:K3}
There exists a strongly minimal simply connected smooth compact complex surface $X$ with $H^1(X,\mathcal O_X^*) \isom  \mathbb Z$.
\end{fact}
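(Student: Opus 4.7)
The plan is to exhibit $X$ as a non-algebraic K3 surface of Picard rank one whose generator has sufficiently negative self-intersection. Recall that a K3 surface is, by definition, a simply connected smooth compact complex surface with trivial canonical bundle and $H^1(X,\mathcal O_X)=0$. The exponential sheaf sequence
$$0\to\ZZ\to\mathcal O_X\to\mathcal O_X^*\to 0,$$
together with the vanishing $H^1(X,\mathcal O_X)=0$, identifies $H^1(X,\mathcal O_X^*)$ with the N\'eron--Severi lattice $\mathrm{NS}(X) = H^{1,1}(X)\cap H^2(X,\ZZ)$ sitting inside the K3 lattice $\Lambda_{K3}$ of signature $(3,19)$.

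The main input I would use is the surjectivity of the period map for K3 surfaces, which allows one to prescribe $\mathrm{NS}(X)$ to be any primitive sublattice of $\Lambda_{K3}$ of signature $(s,r-s)$ with $s\leq 1$. Applying this to the rank one sublattice $\ZZ c$ with $c^2 = -4$ (which certainly embeds primitively into the negative definite part of $\Lambda_{K3}$) produces a K3 surface $X$ with $\mathrm{NS}(X)\cong\ZZ$. By the above, $X$ is automatically a simply connected smooth compact complex surface with $H^1(X,\mathcal O_X^*)\cong\ZZ$, dispatching three of the four desired properties at once.

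The remaining step, and the only real content, is strong minimality. By quantifier elimination in $\ccm$ every definable subset of $X$ is a boolean combination of analytic subsets, so it suffices to verify that $X$ carries no proper analytic subset of positive dimension, i.e.\ no curve. If $C\subset X$ were a reduced irreducible curve, then $[C]=nc$ for some $n\geq 1$, and hence $C^2=-4n^2\leq -4$. On the other hand, the adjunction formula on a K3 surface (where $K_X=0$) gives $C^2=2p_a(C)-2\geq -2$, since the arithmetic genus of a reduced irreducible curve on a smooth surface is nonnegative. This contradiction rules out curves. In the standard model $\mathcal A$ every analytic subset of $X$ is therefore either finite or all of $X$, and by elementary equivalence the same holds for every analytic (hence every definable) subset of $X$ in a saturated extension $\mathcal A'$, giving strong minimality.

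The only substantive ingredient here is the invocation of the period map / Torelli theorem to produce a K3 with the prescribed rank one N\'eron--Severi lattice; the remaining verifications are formal consequences of the K3 axioms and adjunction. I expect no genuine obstacle beyond citing this black box cleanly.
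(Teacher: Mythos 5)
Your proposal is correct, and it follows the same overall strategy as the paper — produce a non-algebraic K3 surface whose Picard group is $\mathbb Z$ generated by a class of self-intersection $\leq -4$, then rule out curves using the fact that an irreducible curve on a K3 has self-intersection $\geq -2$ — but the two arguments source the key existence statement differently. The paper simply cites Le Potier's construction of a family of K3 surfaces with $H^1(X,\mathcal O_X^*)\cong\mathbb Z$ and takes the parameter $g<0$ there so that any curve class would have square $\leq -4$, and it quotes the $\geq -2$ bound from Barth--Peters--Van de Ven. You instead invoke surjectivity of the period map (Todorov) together with a small lattice-theoretic check to manufacture a K3 with $\operatorname{NS}(X)=\mathbb Z c$, $c^2=-4$ (your primitive embedding of such a $c$ is indeed available, e.g.\ $e-2f$ in a hyperbolic plane), identify $H^1(X,\mathcal O_X^*)$ with $\operatorname{NS}(X)$ via the exponential sequence and $H^1(X,\mathcal O_X)=0$, and reprove the $\geq -2$ bound by adjunction with $K_X=0$. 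What your route buys is independence from Le Potier's specific theorem and an explicit control of the Picard lattice; what it costs is reliance on the heavier general machinery of the period map for (possibly non-projective) K3 surfaces, which is exactly the sort of black box Le Potier's result packages. Your final step from ``no curves'' to strong minimality is stated a bit loosely (``by elementary equivalence''): the clean way to say it is that, by quantifier elimination and definability of dimension in $\ccm$, any definable family of proper analytic subsets of $X$ has all standard fibres finite, hence of uniformly bounded size, so all fibres over nonstandard parameters are finite as well; but this is the same level of detail the paper itself elides, so it is not a gap.
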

\begin{proof}
The examples come from $K3$ surfaces, which are certain simply connected smooth compact complex surfaces, details about which can be found in~\cite[Chapter~VIII]{bpv}.
In~\cite[Theorem 1]{LePotier} a family of $K3$ surfaces with $H^1(X,\mathcal O_X^*) \isom  \mathbb Z$ are constructed; we need only verify that among these there are some that are strongly minimal.
This is in fact the case: by choosing $g<0$ in that theorem one obtains $X$ with the additional property that every irreducible curve on $X$ has self-intersection $\leq -4$.
 But it is a general fact that on $K3$ surfaces curves always have self-intersection at least $-2$, see~\cite[VIII.3.7(ii)]{bpv}.
 It follows that such $X$ can have no curves and are therefore strongly minimal.
\end{proof}

Fix $X$ as given by Fact~\ref{fact:K3}.

The following lemma shows that passing to an open subset $X' \subseteq X$
does not cause cohomology classes of $\CC^*$-bundles on $X$ to become more divisible.
We are grateful to Jean-Beno\^{i}t Bost for pointing out an error in a previous formulation of this lemma,
and for indicating the correct statement and proof given here.

\begin{lemma}\label{lem:openH1}
    For any nonempty Zariski open subset $X' \subseteq  X$,
    the map
    \[ H^1(X,\mathcal O_X^*) \rightarrow  H^1(X',\mathcal O_{X'}^*) \]
    induced by the inclusion is injective and has torsion-free cokernel.
\end{lemma}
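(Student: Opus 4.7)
The plan is to pass to the long exact sequence of local cohomology with supports in $D := X \setminus X'$. First, recall from the proof of Fact~\ref{fact:K3} that $X$ contains no irreducible curves, so the proper Zariski-closed set $D$ is of complex codimension at least $2$, hence is a finite set of points $\{p_1,\dots,p_k\}$; if $D$ is empty the lemma is trivial, so assume $k \geq 1$. For any sheaf $\mathcal F$ on $X$, the restriction map fits into the usual exact sequence
\[ H^1_D(X,\mathcal F) \to H^1(X,\mathcal F) \to H^1(X',\mathcal F|_{X'}) \to H^2_D(X,\mathcal F), \]
so it suffices to establish that $H^1_D(X,\mathcal O_X^*) = 0$ (for injectivity) and that $H^2_D(X,\mathcal O_X^*)$ is torsion-free as an abelian group (so that its subgroup, the cokernel of interest, is torsion-free).

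Both of these will be deduced by feeding the exponential short exact sequence $0 \to \mathbb Z \to \mathcal O_X \to \mathcal O_X^* \to 0$ through local cohomology. The two key ingredients I would prove are: first, $H^k_D(X,\mathbb Z) = 0$ for $k \leq 3$, which is a purely topological computation via excision to small $4$-balls around the $p_i$ together with the standard identification $H^k(B, B\setminus\{p\}) \cong \widetilde H^{k-1}(S^3)$; second, $H^k_D(X,\mathcal O_X) = 0$ for $k \leq 1$, which follows from the depth characterisation of local cohomology, since $\mathcal O_X$ has depth~$2$ at each of the smooth points $p_i$ while $D$ has codimension~$2$ there.

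Combining these, the exponential long exact sequence of local cohomology collapses to give $H^1_D(X,\mathcal O_X^*) = 0$ together with an isomorphism $H^2_D(X,\mathcal O_X) \xrightarrow{\sim} H^2_D(X,\mathcal O_X^*)$; the right-hand side is thus a $\mathbb C$-vector space, and in particular torsion-free as an abelian group. Plugging back into the displayed sequence yields both conclusions of the lemma. The step I expect to require the most care is the depth-based vanishing $H^1_D(X,\mathcal O_X) = 0$, as one must confirm that the standard algebraic statement (local cohomology with support in a subset vanishes below the depth) applies in the analytic setting to the finite set of smooth points $D$; once this, together with the elementary topological vanishing, is in place, the rest is formal diagram-chasing.
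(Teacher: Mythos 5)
Your argument is correct and is essentially the paper's: both reduce the lemma, via the four-term exact sequence in local cohomology supported on the finite set $D=X\setminus X'$ (finite by strong minimality), to showing $H^1_D(\mathcal O_X^*)=0$ and that $H^2_D(\mathcal O_X^*)$ is torsion-free, and both extract these from the exponential sequence together with the topology of punctured $4$-balls (so of $S^3$) and a Hartogs-type statement for $\mathcal O_X$. The only organisational difference is that the paper runs the exponential sequence for ordinary cohomology on a disjoint union $V$ of small balls and on $V\setminus D$, then passes to $H^*_D$ by excision, so the step you flag as delicate --- the analytic depth vanishing $H^1_D(X,\mathcal O_X)=0$ --- is there replaced by the elementary combination of Hartogs extension on $V$ with $H^i(V,\mathcal O_V)=0$, whereas you push the exponential sequence directly through the groups $H^k_D$ (using $H^k_D(X,\mathbb Z)=0$ for $k\le 3$), which is an equally valid bookkeeping of the same computation.
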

\begin{proof}
The only property of $X$ that is used in this proof is that it is a strongly minimal compact complex manifold of dimension greater than $1$.
On the other hand we will be making use of more sophisticated tools from homological algebra.

Let $Z := X\setminus X'$, which is finite by strong minimality.
By~\cite[Corollaire~I.2.9]{SGA2}, our homomorphism $H^1(X,\mathcal O_X^*) \rightarrow  H^1(X',\mathcal O_{X'}^*)$ fits into a long exact sequence
\[ H^1_Z(X,\O_X^*) \rightarrow  H^1(X,\O_X^*) \rightarrow  H^1(X,\O_{X'}^*) \rightarrow  H^2_Z(X,\O_X^*), \]
where the local cohomology $H^i_Z(X,\underline{\ \ })$ is the $i$th derived functor of the functor which takes a sheaf of abelian groups $\mathcal F$ on $X$ to the group of global section of~$\mathcal F$ supported on $Z$ (see \cite{SGA2}).
It therefore suffices to prove
\begin{itemize}
\item[($*$)]
$H^1_Z(X,\O_X^*)=0$ and $H^2_Z(X,\O_X^*)$ is torsion-free.
\end{itemize}
Let $V \subseteq  X$ be a union of disjoint open polydiscs centred at the points of $Z$.
By \cite[Proposition~I.2.2]{SGA2}, for all $i$,
\[ H^i_Z(X,\O_X^*) \cong  H^i_Z(V,\O_V^*) .\]
We are thus reduced to proving ($*$) for $V$ rather than $X$.

Now consider the exponential exact sequence
\[ 0 \rightarrow  \ZZ \rightarrow \O_{V} \rightarrow  \O_{V}^*\rightarrow 1.\]
From this we obtain the long exact sequence in cohomology
\[ \cdots \rightarrow H^i(V,\ZZ) \rightarrow  H^i(V,\O_{V}) \rightarrow H^i(V,\O_{V}^*) \rightarrow  H^{i+1}(V,\ZZ) \rightarrow\cdots .\]
Since $V$ is a disjoint union of polydiscs, $H^j(V,\ZZ)=0$ and $H^{j}(V,\O_{V})=0$ for all $j>0$.
See for example~\cite[Corollary 10.1.5]{taylor} for a proof of the latter.
Hence $H^i(V,\O_{V}^*)=0$ for all $i>0$.

Now consider $V' := V \setminus Z$.
As $V$ is a finite disjoint union of contractible 4-dimensional real
manifolds and $Z$ is finite, we still have $H^i(V',\ZZ)=0$ for $i=1,2$.
Hence the exponential exact sequence for $V'$ yields
$H^1(V',\O_{V'}) \cong  H^1(V',\O_{V'}^*)$.

Now consider the long exact sequence in local cohomology \cite[Corollaire~I.2.9]{SGA2}:
\begin{align*}
  H^0(V,\O_V^*) \rightarrow  & H^0(V',\O_{V'}^*) \rightarrow  H^1_Z(V,\O_V^*) \rightarrow  H^1(V,\O_V^*) \rightarrow \\
    & H^1(V',\O_{V'}^*) \rightarrow  H^2_Z(V,\O_V^*) \rightarrow  H^2(V,\O_V^*) \rightarrow  \ldots
\end{align*}
By Hartogs' extension theorem~\cite[Theorem 2.3.2]{hartogs}, the first map is an isomorphism.
Since also $H^1(V,\O_V^*) = 0$, we conclude
$H^1_Z(V,\O_V^*) = 0$, as desired.
For the second part of ($*$), since $H^2(V,\O_V^*) = 0$,
we have $H^2_Z(V,\O_V^*) \cong  H^1(V',\O_{V'}^*) \cong  H^1(V',\O_{V'})$, which is a complex vector space, so $H^2_Z(V,\O_V^*)$
is torsion-free.
\end{proof}

\begin{lemma}\label{lem:Bacldcl}
    If $b\in X(\A')$ is generic then $\operatorname{acl}(b) = \operatorname{dcl}(b)$
\end{lemma}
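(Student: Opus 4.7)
The plan is to derive a contradiction from the assumption that there exists $c \in \acl(b) \setminus \dcl(b)$, by a topological argument using the simple connectedness and strong minimality of $X$. Suppose $c$ lies in a compact complex variety $W$ and has $n \geq 2$ conjugates over $b$. Let $Y := \loc(b, c/\emptyset) \subseteq X \times W$. Since $\dcl(\emptyset) = \A$, types over $\emptyset$ are stationary, so $Y$ corresponds to an irreducible closed analytic subset of $X \times W$ in the standard model. The first projection $\pi : Y \to X$ is surjective, since $b$ is generic in $X$, and its fibre over $b$ consists precisely of the $n$ conjugates of $c$ over $b$; hence $\pi$ is generically finite of degree $n$.

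Next I would consider the branch locus $D \subseteq X$, namely the closed analytic subset of points over which $\pi$ fails to be étale. As a proper closed analytic subset of $X$, strong minimality forces $D$ to be finite, i.e.\ of complex dimension $0$. Setting $Y^{\circ} := \pi^{-1}(X \setminus D)$, the restriction $\pi^{\circ} : Y^{\circ} \to X \setminus D$ is then a finite étale cover of degree $n$. Moreover, $Y^{\circ}$ is connected: $Y$ is an irreducible complex analytic variety of complex dimension $2$, and $Y^{\circ}$ is obtained from $Y$ by removing the finite set $\pi^{-1}(D)$.

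The heart of the argument is the topological step. Since $X$ is a simply connected smooth compact complex surface, it is a simply connected real $4$-manifold, and $D \subseteq X$, being a finite set, has real codimension $4$. By the classical purity fact that removing a subset of real codimension $\geq 3$ from a simply connected manifold does not change the fundamental group, $\pi_1(X \setminus D) = \pi_1(X) = 1$. Hence every connected finite étale cover of $X \setminus D$ is trivial. Applied to $\pi^{\circ}$ this forces $n = 1$, contradicting $n \geq 2$.

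The main thing to be careful about is the model-theoretic-to-analytic translation: verifying that $Y$ is a genuine standard irreducible analytic subset (so that it makes sense to talk of its branch locus), that this branch locus is itself a proper closed analytic subset of $X$ (so that strong minimality of $X$ applies to force it to be finite), and that the topological purity input is used in its correct form. All of these are standard, but they must be assembled carefully; the topological fact $\pi_1(X\setminus D)=\pi_1(X)$ for finite $D$ is the only ingredient that relies on the specific nature of $X$ beyond strong minimality.
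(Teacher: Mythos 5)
Your argument is correct in substance but takes a genuinely different route from the paper. The paper first makes the cover honest: it takes the Stein factorisation of $\pi\colon Y\to X$ (so the map becomes finite everywhere), then a normalisation $\widehat Y$, and applies the purity of branch locus theorem to the finite map $\widehat\pi\colon\widehat Y\to X$ from a normal space to a smooth one; purity says the ramification locus is empty or of codimension one, a codimension-one locus would give a curve in $X$ contradicting strong minimality, and simple connectedness then forces the unramified finite $\widehat\pi$ to be an isomorphism, so $\pi$ is bimeromorphic. You instead stay with $Y$ itself, delete a finite bad set $D\subset X$, and replace the purity theorem by the topological fact that removing a finite set from a simply connected $4$-manifold does not change $\pi_1$, so the connected finite covering $Y^{\circ}\to X\setminus D$ must have degree one. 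Both proofs use strong minimality identically (proper analytic subsets of $X$ are finite, i.e.\ $X$ carries no curves) and simple connectedness identically; what your version buys is that it avoids Stein factorisation, normalisation and the purity-of-branch-locus theorem, at the price of the codimension-$\ge 3$ $\pi_1$-purity statement and of having to verify by hand that $\pi$ is an honest covering over a cofinite set --- which is precisely the bookkeeping the paper's two preliminary reductions are designed to make automatic.

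Two points in your write-up need repair, both fixable. First, $\pi^{-1}(D)$ need not be a finite set: fibres of $\pi$ over points of $D$ can be positive-dimensional (indeed that is one reason such points land in $D$). Connectedness of $Y^{\circ}$ still holds, but the correct reason is that $\pi^{-1}(D)$ is a proper closed analytic subset of the irreducible $Y$, and the complement of a proper analytic subset of an irreducible complex space is connected. Second, the finiteness of $D$ itself deserves more than the phrase ``branch locus'': you should first remove the $\pi$-images of $\operatorname{Sing}(Y)$ and of the locus of positive-dimensional fibres (each is the image of a proper closed analytic subset of $Y$ under the proper map $\pi$, hence a proper analytic subset of $X$, hence finite by strong minimality); only over the remaining cofinite set is the Jacobian criterion available, and there the ramification locus is a proper analytic subset --- properness requires an argument such as Sard's theorem, since generic finiteness alone does not formally say ``generically unramified'' --- whose image is finite because any one-dimensional component would, by Remmert--Stein, close up to a curve in $X$. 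With $D$ so produced, properness of $\pi$ makes $Y^{\circ}\to X\setminus D$ a degree-$n$ covering map with $n\ge 2$, and your topological conclusion goes through.
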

\begin{proof}
This only uses the fact that $X$ is simply connected, smooth, and has no subvarieties of codimension 1.
    Let $c \in \operatorname{acl}(b)$, $Y:=\loc(c,b)$, and $\pi:Y \to  X$ the  generically finite surjective holomorphic map given by the second co-ordinate projection.
    Consider the {\em Stein factorisation} of $\pi$
    $$\xymatrix{
    Y\ar[rr]^f\ar[dr]_{\pi}&& \widetilde Y\ar[dl]^{\widetilde\pi}\\
    & X}$$
    namely, $\widetilde\pi$ is everywhere finite-to-one and the fibres of $f$ are connected.
    So in this case $f$ is a bimeromorphism.
    Next, precomposing with a {\em normalisation}, namely a finite bimeromorphic map $\widehat Y\to\widetilde Y$ with $\widehat Y$ a normal compact complex variety, we obtain a (still finite) surjective map $\widehat\pi:\widehat Y\to X$.
    Because $\widehat Y$ is normal and $X$ is smooth we can apply the purity of branch locus theorem (see, for example, Theorem~VII.1.6 of~\cite{scv7}) which tells us that the ramification locus of $\widehat\pi$, namely the set of points in $\widehat Y$ where $\widehat\pi$ is not locally biholomorphic, say $E$, is a complex analytic set that is either empty or of codimension $1$.
    In the latter case, as $\widehat\pi$ is finite, it would follow that $\widehat\pi(E)$ is a curve in $X$, contradicting strong minimality of this surface.
    Hence $\widehat\pi$ is unramified.
    But simple-connectedness of $X$ then forces $\widehat\pi$, and hence $\widetilde\pi:\widetilde Y\to X$, to be an isomorphism.
    It follows that the original $\pi:Y\to X$ is bimeromorphic, and so $c\in\dcl(b)$, as desired.
    \end{proof}

Fix a nontrivial holomorphic $\CC^*$-bundle $P\to X$ such that $[P]$ is a generator of $H^1(X,\mathcal O_X^*)$, given to us by Fact~\ref{fact:K3}.
Let $\pi: P^{\operatorname{cl}}\to X$ be the ambient projective line bundle in which $P$ lives as  a Zariski open set.

\begin{proposition}\label{prop:Xacldcl}
    If $a \in  P^{\operatorname{cl}}(\A')$ is generic then $\operatorname{acl}(a) = \operatorname{dcl}(a)$.
\end{proposition}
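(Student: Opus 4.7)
The plan is to emulate the proof of Lemma~\ref{lem:Bacldcl}, with additional care since $P^{\operatorname{cl}}$, unlike $X$, carries codimension-one subvarieties. Given $c \in \acl(a)$, I would set $Y := \loc(c,a)$ and, by Stein factorising then normalising the projection $Y \to P^{\operatorname{cl}}$, obtain a finite surjective $\widehat\pi \colon \widehat Y \to P^{\operatorname{cl}}$ with $\widehat Y$ normal; it suffices to show $\widehat\pi$ is an isomorphism. By purity of branch locus, the ramification locus $E \subseteq \widehat Y$ is empty or of pure codimension one. If $E = \emptyset$, then $\widehat\pi$ is unramified, and since $P^{\operatorname{cl}}$ is simply connected (being a $\mathbb P^1$-bundle over the simply connected $X$), $\widehat\pi$ is an isomorphism. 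So I may assume $E$ has pure codimension one and focus on the branch divisor $D := \widehat\pi(E)$.

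To analyse $D$, apply the reasoning of Lemma~\ref{lem:Bacldcl} to the composition $\widehat Y \to P^{\operatorname{cl}} \to X$: Stein-factorising and normalising forces the finite part to be $X$ itself, so $\widehat Y \to X$ has connected fibres. Each component of $D$ must then project onto $X$ --- containment in a fibre of $\pi$ is ruled out by the absence of curves in $X$. Applying Lemma~\ref{lem:Bacldcl} once more, each such component's generic fibre over $X$ reduces to a single $\dcl(\pi(a))$-definable point, so each component of $D$ is a holomorphic section of $\pi$.

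Next I would classify the possible sections. Writing $P^{\operatorname{cl}} = \mathbb P(L \oplus \mathcal O_X)$ for a line bundle $L$ with $[L]$ a generator of $\operatorname{Pic}(X) \cong H^1(X, \mathcal O_X^*)$, sections correspond to rank-one quotients of $L \oplus \mathcal O_X$. Since $X$ has no effective non-zero divisors, $H^0(X, L^{\otimes k}) = 0$ for $k \neq 0$, restricting the quotients to the two coordinate projections; the resulting sections are exactly the zero and infinity sections of the compactification of $P$. So $D$ is contained in these two sections, and $\widehat\pi$ is unramified over $P$.

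It remains to show that the connected finite \'etale cover $\widehat\pi^{-1}(P) \to P$ is an isomorphism. Since proper closed subsets of $X$ are finite by strong minimality, one can pass to an open $X' \subseteq X$ over which $\widehat Y|_{X'} \to X'$ is a $\mathbb P^1$-bundle; removing the two distinguished sections produces a $\mathbb C^*$-subbundle $\widehat P'$ such that the fibrewise behaviour $z \mapsto z^m$ of $\widehat\pi$ yields $m[\widehat P'] = [P]|_{X'}$ in $H^1(X', \mathcal O_{X'}^*)$, where $m$ is the degree of $\widehat\pi$. Lemma~\ref{lem:openH1} descends this divisibility to $H^1(X, \mathcal O_X^*) \cong \mathbb Z$; but $[P]$ is a generator, so $m = 1$, $\widehat\pi$ is an isomorphism, and $c \in \dcl(a)$. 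The main obstacle I anticipate is the combined task of classifying sections of $\pi$ and then promoting the fibrewise power-map structure to a genuine $\mathbb C^*$-subbundle over $X'$, which is the key place where Lemma~\ref{lem:openH1} is needed.
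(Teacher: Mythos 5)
Your proof has the same overall architecture as the paper's: reduce to a finite surjective map onto $P^{\operatorname{cl}}$ whose ramification locus is empty or of pure codimension one, show the branch divisor lies in $P^{\operatorname{cl}}\setminus P$, and then use the fibrewise degree-$m$ covering structure over $P$ together with Lemma~\ref{lem:openH1} to force $m=1$. The one step you do genuinely differently is ruling out branch components meeting $P$: the paper shows that any such component would be the image of a meromorphic section of $P\to X$, hence would bimeromorphically trivialise $P$ over a Zariski open $X'$, contradicting Lemma~\ref{lem:openH1}; you instead classify all sections of $\mathbb P(L\oplus\mathcal O_X)\to X$ outright, using that $\operatorname{Pic}(X)\cong\mathbb Z$ is generated by $[L]$ and that $X$ carries no curves, so that only the two coordinate sections occur. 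That is a perfectly good alternative, with the mild advantage of not invoking Lemma~\ref{lem:openH1} at that step (though you should say a word about meromorphic versus holomorphic sections, since the branch components are a priori only generically sections). One point needing more care: for the final step you need the general fibre of $\widehat\pi^{-1}(P)\to X$ to be \emph{connected}, and Stein factorisation of $\widehat Y\to X$ only gives connectedness of the fibres of $\widehat Y\to X$ --- a connected but reducible fibre could disconnect after deleting the preimages of the two distinguished sections. The paper gets irreducibility of the general fibre from stationarity of $\tp(z/\pi(a))$, via Lemma~\ref{lem:Bacldcl} and elimination of imaginaries; in your setup you can instead observe that a general fibre of the normal space $\widehat Y$ is normal, hence, being connected, irreducible, and an irreducible curve remains connected after removing finitely many points. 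With that in hand, the covering-space argument over small balls $U$ (using $\pi_1(U\times\CC^*)=\mathbb Z$), which is how the paper makes precise your ``fibrewise power-map'' claim, goes through and completes your argument.
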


\begin{proof}
We are grateful to Will Sawin for his answer to a mathoverflow question~\cite{sawin} which pointed us toward analysing the situation in terms of \'etale covers.

Let $z\in\acl(a)$ and set $Z:=\loc(z)$.
Arguing exactly as in the beginning of the proof of Lemma~\ref{lem:Bacldcl}, after replacing $z$ with some $z'$ such that $z'\in\operatorname{acl}(a)$ and $a,z\in \operatorname{dcl}(z')$, 
we may assume that there is a finite surjective holomorphic map $f:Z\to P^{\operatorname{cl}}$ whose ramification locus $E$ is either empty or of pure codimension one.

We first argue that $f(E)\subseteq P^{\operatorname{cl}}\setminus P$.
Indeed, if $E\neq\emptyset$ then as $f$ is finite, $f(E)$ is of pure codimension one in $ P^{\operatorname{cl}}$, and hence is of dimension $2$.
Let $S$ be an irreducible component of $f(E)$.
Since the fibres of $\pi$ are one-dimensional, it follows that $\pi(S)$ is infinite in $X$, and so by strong minimality is all of $X$.
Thus $S$ projects generically finitely onto $X$, and so by Lemma~\ref{lem:Bacldcl}, the projection is in fact generically one-to-one.
That is, $S$ is the image of a meromorphic section to $ P^{\operatorname{cl}}\to X$.
So if $S\cap P\neq\emptyset$, i.e., if $S$ is generically contained in $P$, then $P\to X$ has a meromorphic section $s:X\to P$.
It follows that $P\to X$ is bimeromorphically trivial: there is a nonempty Zariski open subset $X'\subseteq X$ such that $P|_{X'}$ is isomorphic to $X'\times\mathbb \CC^*$ over $X'$.
Indeed, the bimeromorphism will send $p\in P|_{X'}$ to $\big(\pi(p),z\big)$ where $z$ is the nonzero complex number such that $z\cdot p=s\big(\pi(p)\big)$.
Hence $[P|_{X'}]=0$ in $H^1(X',\mathcal O_{X'}^*)$.
By~\ref{lem:openH1}, this contradicts the nontriviality of $[P]$.
So $f(E)\subseteq P^{\operatorname{cl}}\setminus P$.

Hence, setting $Q:=f^{-1}(P)$, $f|_Q:Q\to P$ is an unramified finite cover.
We aim to prove that, after possibly removing finitely many points from $X$, $Q$ is also a $\CC^*$-bundle and that in the local trivialisations $f|_Q$ becomes $\id_U\times[n]:U\times\CC^*\to U\times\CC^*$ for some $n>0$, where $[n]:\CC^*\to\CC^*$ is the raising to the power $n$ map.

To that end, note first of all that as $\operatorname{CCM}$ eliminates imaginaries, Lemma~\ref{lem:Bacldcl} applied to $b:=\pi(a)\in X(\A')$ implies that $\tp(z/b)$ is stationary.
Hence the general fibres of $Z\to X$ are irreducible (see Lemmas~2.7 and~2.11 of~\cite{ret}).
It follows that the general fibres of $Q\to X$ are irreducible, so in particular connected for the Zariski topology and thus for the euclidean topology, too.
Let $X'\subseteq X$ be a nonempty Zariski open set over which this happens, and let $U\subseteq X'$ be an open ball such that $P|_U$ is isomorphic to $U\times\CC^*$ over~$U$.
Then $Q|_U$ is connected 
%\marginpar{\tiny MH: Maybe a stupid question, but is it obvious that Zar-connected implies connected for the euclidean topology? For algebraic varieties it's clear, but for general CCM's? At least I think we should add %a sentence at this effect.}
and we have an induced holomorphic unramified finite covering map $\alpha: Q|_U\to U\times\CC^*$ over~$U$.
Since $U\times \CC^*$ has fundamental group $\mathbb Z$, any topological covering map is isomorphic as a topological covering map to some $\id_U \times [n]: U\times \CC^* \maps  U\times \CC^*$.
Now $\alpha$ is in particular a topological
	covering map, so there is a topological covering map isomorphism $\theta :  Q|_{U}\mapsto  U\times \CC^*$ such that $\alpha = (\id_U\times[n])\circ\theta$.
	But $\alpha$ and $[n]$ are locally
	biholomorphic, so it follows that $\theta$ is locally biholomorphic,
	and so is an isomorphism of complex manifolds.
	That is, $Q|_U\to P|_U$ is isomorphic to $\id_U \times [n]: U\times \CC^* \maps  U\times \CC^*$.
	
	What we have shown is that the $\CC^*$-bundle structure on $P|_{X'}$ lifts to one on $Q|_{X'}$ and that in $H^1(X',\mathcal O_{X'}^*)$, $n[Q|_{X'}]=[P|_{X'}]$.
	It follows by Lemma~\ref{lem:openH1} that $[P]$ is $n$-divisible in $H^1(X,\mathcal O_X^*)$.
This contradicts the fact that $[P]$ generates $H^1(X,\mathcal O_X^*)$, unless $n=1$.
That is, $f$ restricts to an isomorphism $Q|_{X'}\to P|_{X'}$, and as $Q|_{X'}$ is Zariski open in $Z$ and $P|_{X'}$ is Zariski open in $P^{\operatorname{cl}}$, we have that $f:Z\to P^{\operatorname{cl}}$ is bimeromorphic.
So $z\in\dcl(a)$.
\end{proof}

\begin{proof}[Proof of Theorem~\ref{thm:non3unique}]
We have a uniformly definable action of $\CC^*$ on the fibres of $P\to X$.
So if $b\in X(\A')$ is generic, $P_b$ is a definable (principal) homogenous space for $K^*$.
Let $a\in P_b$ be generic over $b$.
Note that by the irreducibility of $P$ and the additivity of dimension, $a$ is generic in $P$.
Let $\phi,\theta\in K^*$ be independent generics over $a$.
    So $(a,\phi a,\theta a)\in P_b^3$ is an independent triple over $b$, and this will be our witness to non-3-uniqueness.
    Fix $n>1$ and choose $n$th roots $\phi'$ and $\theta'$, of $\phi$ and $\theta$ respectively.
     Then $\frac{\phi'}{\theta'}\in\acl(\frac{\phi}{\theta})\subseteq \acl(\phi a,\theta a)$.
    On the other hand, $\theta'\in\acl(\theta)\subseteq \acl(a, \theta a)$.
    It follows that
    $\phi'\in\dcl(\frac{\phi'}{\theta'},\theta')\subseteq\dcl\big(\acl(\phi a,\theta a), \acl(a, \theta a)\big)$.
    So
    $$\phi'\in\acl(\phi a,a)\cap\dcl\big(\acl(\phi a,\theta a), \acl(a, \theta a)\big)$$
    Suppose toward a contradiction that $\phi'\in\dcl\big(\acl(\phi a),\acl(a)\big)$.
  Since both $a$ and $\phi a$ are generic in $P^{\operatorname{cl}}(\A')$, by Proposition~\ref{prop:Xacldcl}, it follows that $\phi'\in\dcl(\phi a,a)=\dcl(a,\phi)$.
  But note that $\tp(a/\phi)$ is stationary (since $\tp(a)$ is and  $a\ind\phi$) and so $\dcl(a,\phi)\cap\acl(\phi)=\dcl(\phi)$.
  Hence $\phi'\in\dcl(\phi)$, which contradicts $n>1$.
\end{proof}

\begin{corollary}
\label{notEI}
$\ccma$ does not eliminate imaginaries.
\end{corollary}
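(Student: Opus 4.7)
The plan is to invoke the Hrushovski correspondence between $3$-uniqueness in $T$ and elimination of imaginaries in $TA$, together with the failure of $3$-uniqueness already established in Theorem~\ref{thm:non3unique}. Concretely, the theory $\ccm$ is of finite Morley rank on each sort (as recalled at the start of Section~\ref{not3unique} and in the introduction), so in particular $\ccm=\ccm^{\eq}$ is superstable, and therefore falls into the setting to which \cite[Propositions~4.5 and~4.7]{HrGroupoids} applies.

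First I would recall that by those results of Hrushovski, for a superstable theory $T$ admitting a model companion $TA$ of $T_{\forall,\sigma}$, every completion of $TA$ eliminates imaginaries if and only if $T$ satisfies $3$-uniqueness. Since $\ccma$ is complete (as noted at the start of Section~\ref{not3unique}, because $\acl(\emptyset)=\dcl(\emptyset)=\A$, so quantifier reduction gives completeness), the forward direction of this equivalence says that if $\ccma$ eliminated imaginaries then $\ccm$ would satisfy $3$-uniqueness.

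Then I would simply contrapose: Theorem~\ref{thm:non3unique} produces, in a sufficiently saturated $\A'\models\ccm$, elements $b,x,y,z$ with $x,y,z$ independent over $b$ witnessing the failure of the defining equality for $3$-uniqueness. This directly contradicts $3$-uniqueness in the sense of~\cite[\S4]{HrGroupoids}, and so by the Hrushovski correspondence the completion $\ccma$ cannot eliminate imaginaries.

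The only mild subtlety, and hence the ``main obstacle'' in presentation, is checking that the hypotheses of Hrushovski's theorem are genuinely met: namely that $\ccm$ is superstable (clear from finiteness of Morley rank on each sort), that $\ccma$ exists (established in~\cite{bgh} and recalled in the text), and that $3$-uniqueness is formulated in the same way as in Theorem~\ref{thm:non3unique}. Beyond that verification, no further argument is required, and the corollary follows in a single line.
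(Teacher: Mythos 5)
Your proposal is correct and follows exactly the paper's own argument: invoke Hrushovski's equivalence between $3$-uniqueness in a superstable $T$ and elimination of imaginaries in $TA$, and combine it with the failure of $3$-uniqueness for $\ccm$ established in Theorem~\ref{thm:non3unique}. The extra verification of hypotheses (superstability of $\ccm$, existence and completeness of $\ccma$) is sound and merely makes explicit what the paper leaves implicit.
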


\begin{proof}
Hrushovski has shown that for superstable $T$, elimination of imaginaries for (all completions of) $TA$ is equivalent to $T$ satisfying $3$-uniqueness -- this is~\cite[Propositions~4.5 and~4.7]{HrGroupoids}.
\end{proof}

\begin{remark}
\label{notEIExplicit}
  We can explicitly describe a finite imaginary which is not eliminated.
  Let $\pi: P\to X$ be as above.
    Let $E$ be the relation on the fixed set
    $P^f := \{ a \in P \;|\; \sigma(a)=a \}$
    of $P$ defined by
      \[ a E a' \iff \exists\theta \in K^* \,(a' = \theta^2 a \wedge \sigma(\theta)=\theta) .\]
  This is a $0$-definable equivalence relation, and as the subgroup of squares of the multiplicative group of the fixed field has index two, there are two equivalence
  classes on each fibre $P_b^f$.
  Let $a \in P^f$ be generic in $P$; such exists by the axioms for CCMA.
  Let $b := \pi(a)$.
  Let $\theta\in K^*$ be generic over $a$
    and such that $\sigma(\theta) = -\theta$.
  By Proposition~\ref{prop:Xacldcl} and Lemma~\ref{lem:Bacldcl},
    $\aclsigma(a) = \acl(a) = \dcl(a)$ and $\aclsigma(b) = \acl(b) = \dcl(b)$.
  Now $\theta^2 a\in P_b^f$ is also generic in $P_b$,
    and so it follows from the quantifier reduction that
    $\tpsigma(a / \aclsigma(b)) = \tpsigma(\theta^2 a / \aclsigma(b))$.
  But $P_b^f / E = \{ a/E, (\theta^2 a) / E \}$.
  So $a/E \in \aclsigma^{\eq}(b) \setminus \dcl_{\sigma}^{\eq}(\aclsigma(b))$.
\end{remark}

\bigskip
\section{Induced structure in $\ccma$}
\label{finitecovers}

\noindent
Fix a sufficiently saturated model $(\mathcal A',\sigma)\models\ccma$. In what follows, we will apply the results of Section~\ref{ta-section} to the theory $\ccma$. Since $\dcl(\emptyset)=\A\models\ccm$, 
the characterisation of stable embeddedness given in Corollary \ref{Cor:dcl-model-stab-emb} holds in $\ccma$, but we will only make use of one direction, namely the criterion stated in Proposition~\ref{intern-prop}(i). 

We obtain directly an understanding of what structure $\ccma$ induces on projective algebraic varieties.
Recall that, for a compact complex variety $X$, by $L^X$ we mean the language of analytic sets restricted to $X$, that is the language where there is a predicate for each complex-analytic subset of each cartesian power of $X$.

\begin{theorem}
\label{purityalg}
If $X$ is a complex projective variety then $X(\mathcal A')$ is stably embedded in $(\mathcal A',\sigma)$ and the induced structure is $\big(X(\mathcal A'),L^X_{\sigma}\big)$.
\end{theorem}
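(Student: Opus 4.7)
The plan is to apply Proposition~\ref{intern-prop}(i) with the set of sorts taken to be $\{X\}$; it suffices to show that $X$ internalises finite covers in $\ccm$. Having this, stable embeddedness in $\ccma$ and the form of the induced structure (possibly over parameters in $X(M)$) follow immediately; since $\dcl(\emptyset)=\mathcal A\models\ccm$, the parameters can then be eliminated via Corollary~\ref{Cor:dcl-model-stab-emb}. The case $\dim X=0$ is trivial as $X(\mathcal A')$ is finite, so I may assume $\dim X \geq 1$.

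To verify the criterion of Lemma~\ref{intern-rem}, let $M\models\ccm$, let $a$ be a finite tuple in $X(\mathcal A')$, and let $b\in\acl(Ma)$. Set $Y := \loc(a,b/M)$; its coordinate projection to $\loc(a/M)\subseteq X^n$ is a generically finite surjection onto the projective variety $\loc(a/M)$. Hence $Y$ is Moishezon, so bimeromorphic to some projective variety $Y'\subseteq \mathbb{P}^N$. The point of $Y'$ corresponding to $(a,b)$ under this bimeromorphism is a tuple $c$ which, by Chow's theorem, lies in $\dcl(\mathbb{P}^1)$, and moreover $c$ and $(a,b)$ are mutually $M$-definable.

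Using $\dim X \geq 1$, choose a non-constant (hence, by compactness, surjective) holomorphic map $X \to \mathbb{P}^1$; then every point of $\mathbb{P}^1$ lies in $\dcl$ of some element of $X$, giving $\mathbb{P}^1\subseteq\dcl(X)$ and hence $c\in\dcl(X)$. Since also $c\in\dcl(M,a,b)\subseteq\acl(Ma)$, we obtain $c\in\acl(Ma)\cap\dcl(X)$. By stable embeddedness of $X$ in the stable theory $\ccm$ together with elimination of imaginaries, $\acl(Ma)\cap\dcl(X)=\acl(X(M)a)\cap\dcl(X)$, so $c\in\acl(X(M)a)\cap\dcl(X)$. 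As $a$ itself is trivially in this set, $b\in\dcl(M,a,c)$ lies in $\dcl\!\big(M,\acl(X(M)a)\cap\dcl(X)\big)$, as required.

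I expect the main obstacle to be the Moishezon-to-projective step together with turning the bimeromorphism $Y \dashrightarrow Y'$ into a genuine $M$-definable interdefinability between $(a,b)$ and a tuple in projective space; the remaining points, including the identity $\acl(Ma)\cap\dcl(X)=\acl(X(M)a)\cap\dcl(X)$, are standard consequences of stability, EI, and stable embeddedness of $X$ in $\ccm$.
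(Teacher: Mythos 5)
Your overall route is the same as the paper's: Theorem~\ref{purityalg} is deduced from Proposition~\ref{intern-prop}(i) together with the fact that complex projective varieties internalise finite covers in $\ccm$ --- except that the paper simply cites this fact (Fact~3.1 of~\cite{moosapillay08}) rather than proving it, whereas you attempt a proof. The gap in your attempt sits exactly at the step you yourself flag as ``the main obstacle'', and it is a genuine one rather than a routine verification. The set $Y=\loc(a,b/M)$ is not an actual compact complex variety but a nonstandard closed set, i.e.\ the fibre over an $M$-point of a definable family $Z\to S$ of compact complex spaces; the theorem ``a generically finite cover of a Moishezon space is Moishezon, hence bimeromorphic to a projective variety'' is a statement about standard spaces and cannot be applied to $Y$ as such. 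What is needed is the uniform, definable-in-families version: for a family whose fibres admit generically finite surjections onto fibres of a projective family, the fibres must be bimeromorphic to projective varieties \emph{via a single definable family of bimeromorphisms over the base} (possibly after base change), so that the statement transfers to $\A'$ and produces an $M$-definable interdefinability between $(a,b)$ and a tuple from $\mathbb{P}^1$. That uniformity is precisely the content of the Campana--Fujiki-type result underlying Fact~3.1 of~\cite{moosapillay08}, and nothing in your sketch supplies it.

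A secondary error: a non-constant \emph{holomorphic} map $X\to\mathbb{P}^1(\CC)$ need not exist --- for $X=\mathbb{P}^2(\CC)$ there is none, since its fibres would be pairwise disjoint curves in $\mathbb{P}^2(\CC)$. A dominant meromorphic map $X\dashrightarrow\mathbb{P}^1(\CC)$ does exist when $\dim X\geq 1$ and suffices for your purpose: its image on the complement of the indeterminacy locus omits only finitely many standard, hence $\emptyset$-definable, points, so one still gets $\mathbb{P}^1(\A')\subseteq\dcl(X(\A'))$. The remaining steps --- the reduction via Lemma~\ref{intern-rem}, the identity $\acl(Ma)\cap\dcl(X)=\acl(X(M)a)\cap\dcl(X)$, and the removal of parameters via Corollary~\ref{Cor:dcl-model-stab-emb} --- are fine.
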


\begin{proof}
This follows immediately from Proposition~\ref{intern-prop} by the fact that complex projective varieties internalise finite covers in $\ccm$; see Fact~3.1 of~\cite{moosapillay08}.
\end{proof}

Theorem~\ref{purityalg} tells us that $\acfa$ (with the elements of $\CC$ named) is ``purely stably embedded" in $\ccma$.
Indeed, note that $(K,+,\times)$, the interpretation of the complex field  in $\A'$, with the elements of $\CC$ named, is $0$-definably bi-interpretable with $(X(\A'), L^{X})$ where $X=\mathbb P(\CC)$, and hence by applying the theorem  we have that the structure induced on $K$ by $(\A',\sigma)$ is precisely $(K,+,\times,\sigma)\models\acfa$.

\begin{remark}[{Bustamante \cite[Prop. 2.3.5]{bustamante}}]
$\acfa$ is also interpretable in the theory of differentially closed fields (of characteristic 0) with a generic automorphism, by considering the field of constants.
However, $\acfa$ is not in this way purely stably embedded in $\dcfa$.
Indeed, let $(K,+,\times,\delta,\sigma)\models\dcfa$ with $C=\operatorname{const}(\delta)$, and $b\in\fix(\sigma)\setminus C$. Then 
$\{y\in C\,\mid\, \exists x\,[\sigma(x)=x\wedge x^2-b=y]\}$ is not definable in~$(C,+,\times,\sigma)$. 
\end{remark}

What about the structure induced on other compact complex varieties?
In the rest of this section we aim to prove the following theorem that deals with some other compact complex manifolds of particular interest.

\begin{theorem}
\label{purity}
Suppose $X$ is either a nonalgebraic simple complex torus or a strongly minimal simply connected nonalgebraic compact complex manifold.
Then $X(\mathcal A')$ is stably embedded in $(\mathcal A',\sigma)$ with induced structure $(X(\mathcal A'),L^X_\sigma)$.
\end{theorem}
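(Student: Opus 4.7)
The plan is to apply Proposition~\ref{intern-prop}(i), which reduces the theorem to establishing that $X$ internalises finite covers in $\ccm$. By Lemma~\ref{intern-rem}, I must show that for every $M\models\ccm$ and every finite tuple $a$ from $X$, every $c\in\acl(Ma)$ lies in $\dcl\big(M,\,\acl(X(M)a)\cap\dcl(X)\big)$. Writing $n$ for the length of $a$, I form $Z:=\loc((c,a)/M)$ with the projection $\pi:Z\to X^n$ generically finite and surjective. Following the strategy of Lemma~\ref{lem:Bacldcl} and Proposition~\ref{prop:Xacldcl}, I pass via Stein factorisation and normalisation to a finite surjective holomorphic map $\hat\pi:\hat Z\to X^n$ with $\hat Z$ normal and irreducible.

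The central geometric observation is that in both our cases $X^n$ has no irreducible closed subvariety of codimension one. By the purity of branch locus (applicable since $\hat Z$ is normal and $X^n$ is smooth), this forces the branch locus of $\hat\pi$ to be empty, so $\hat\pi$ is an \'etale finite cover. For the simply connected strongly minimal case, $\dim X\geq 2$ (since $X$ has no curves by strong minimality, and the examples of interest are of K3 type as in Fact~\ref{fact:K3}); the no-codimension-one-subvariety property of $X^n$ then follows by induction on $n$, using projections onto $X$-factors together with the dimension formula to reduce to the absence of proper infinite subvarieties of $X$. For the simple nonalgebraic torus case, one again has $\dim X\geq 2$ (since one-dimensional simple complex tori are elliptic curves, hence algebraic), and uses the Ueno-style structure of closed subvarieties of complex tori, combined with the zero-algebraic-dimension property of $X$, to reduce arbitrary subvarieties of $X^n$ to translates of subtori; a dimension count using simplicity of $X$ then rules out codimension-one subtori of $X^n$.

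With $\hat\pi$ established as \'etale, the two cases diverge. In the simply connected case, $X^n$ is simply connected (by K\"unneth), and $\hat Z$ is connected (being irreducible), so the \'etale cover $\hat\pi$ must be a biholomorphism; hence $\pi$ is bimeromorphic and $c\in\dcl(Ma)$, which trivially lies in the target set. In the torus case, any connected \'etale finite cover of the complex torus $X^n$ is itself a complex torus, so $\hat Z$ is a torus $X'$ and $\hat\pi$ is an isogeny; the dual isogeny $X^n\to X'$ shows $X'\subseteq\dcl(X^n)\subseteq\dcl(X)$, whence $c\in\dcl(X)\cap\acl(Ma)$. Stable embeddedness of $\dcl(X)$ in the stable theory $\ccm$ then gives $c\in\acl(X(M)a)\cap\dcl(X)$, as in the argument concluding the proof of Lemma~\ref{intern-rem}.

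The main obstacle I anticipate is the geometric claim that $X^n$ has no codimension-one subvariety. The simply connected case is relatively clean by induction once one exploits the no-curves property. The torus case is subtler, requiring a careful appeal to the structure theory of subvarieties of nonalgebraic complex tori --- namely the use of stabiliser subtori and algebraic dimension to rule out hypersurfaces in $X^n$ that would otherwise arise in the algebraic setting but not here.
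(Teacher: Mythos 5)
Your overall skeleton is the same as the paper's (reduce via Proposition~\ref{intern-prop}(i) to internalising finite covers; Stein factorisation and normalisation; purity of branch locus plus the absence of codimension-one subvarieties of $X^n$ to get an unramified cover; simple connectedness forces an isomorphism, while in the torus case one inverts the isogeny via multiplication by the degree). But there is a genuine gap in how you execute it: you apply Stein factorisation, normalisation, purity of branch locus, and the classification of \'etale covers of complex tori directly to $Z=\loc\big((c,a)/M\big)$, where $M$ is an arbitrary (typically nonstandard, saturated) model of $\ccm$. These are theorems about honest compact complex varieties; a locus over nonstandard parameters is only a definable closed set, a fibre of a standard family, and the geometric conclusions transfer to it only insofar as they are expressible uniformly and definably in that family. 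This is precisely why the paper, in Proposition~\ref{toruscover}, first replaces the parameters by a finite canonical base $b$, passes to the standard varieties $S=\loc(b)$, $Y=\loc(bac)$, and works relatively over $S$; the technical heart is then Claim~\ref{uniform-group}, which shows that \emph{after a finite base change} the torus group structure on the covering fibres $\widehat Y_s$ is \emph{uniformly definable} in $s$ (the identity section cannot be picked out definably without the base change, whence the argument with connected components of $\widehat\pi^{-1}(S\times\Gamma)$ and a Stein factorisation of that family). Only with this uniformity is your ``dual isogeny'' actually a definable meromorphic map $S\times X^\ell\to\widehat Y$ over $S$, which is what internality requires. Your proposal asserts the fibrewise statement (``a connected \'etale cover of a torus is a torus, take the dual isogeny'') but supplies no argument that this structure is definable over the relevant parameters, so the key step $c\in\dcl(M\cup X)$ is not established.

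Two smaller points. First, for an arbitrary finite tuple $a$ from $X$ the projection $\loc((c,a)/M)\to X^n$ need not be surjective; you must first replace $a$ by an $\acl$-basis over $M$ (using strong minimality of $X$, which holds in both cases), as the paper does. Second, your assertion $X'\subseteq\dcl(X^n)\subseteq\dcl(X)$ is too strong: the isogeny and the cover are defined over $M$, so at best you get $c\in\dcl(M\cup X)$, i.e.\ $X$-internality of $\tp(c/M)$; you then still need the standard ``internality over a model'' step (as in the proof of Lemma~\ref{intern-rem}) to replace $c$ by a tuple from $\dcl(X)$ lying in $\acl(X(M)a)$. The simply connected case of your argument is essentially fine modulo the same standard-versus-nonstandard caveat, since there the conclusion is just that the cover is generically one-to-one, which is a definable condition in families, matching Proposition~\ref{simpConnIntern}.
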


This theorem will follow also from Proposition~\ref{intern-prop}, once we know that the relevant compact complex manifolds internalise finite covers in $\ccm$.
That is what we now do, dealing with nonalgebraic simple complex tori first in Proposition~\ref{toruscover} below, and then with (the much easier)  strongly minimal simply connected nonalgebraic compact complex manifolds in Proposition~\ref{simpConnIntern}.
Note that we are now working just in the theory $\ccm$.

\begin{proposition}
\label{toruscover}
Suppose $X$ is a simple nonalgebraic complex torus.
Then $X$ internalises finite covers in $\ccm$.
\end{proposition}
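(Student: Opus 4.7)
The approach is to verify the criterion of Lemma~\ref{intern-rem}. So given $M\models\ccm$ and a finite tuple $a\in X$, the goal is to show
$$\acl(Ma)\subseteq\dcl\!\bigl(M,\acl(X(M)a)\cap\dcl(X)\bigr),$$
the reverse inclusion being immediate. Fix $b\in\acl(Ma)$ and consider the locus $Y:=\loc(b,a/M)$ with its projection $\pi:Y\to V$, where $V:=\loc(a/M)\subseteq X^k$. Exactly as in the first part of the proof of Lemma~\ref{lem:Bacldcl}, by applying Stein factorisation followed by normalisation, after replacing $b$ by an interalgebraic element over $M$ we may assume there is a finite surjective holomorphic map $\widehat\pi:\widehat Y\to V$ with $\widehat Y$ a normal compact complex variety.

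The crucial geometric input would be a structural claim: because $X$ is a simple nonalgebraic complex torus, $V$ is a translate of a subtorus of $X^k$, and any such $\widehat Y$ is itself a complex torus with $\widehat\pi$ decomposing as an isogeny followed by a translation. The claim about $V$ should follow from the fact that simple nonalgebraic tori admit no nontrivial meromorphic fibrations, forcing subvarieties of $X^k$ to be translates of subtori. The claim about $\widehat Y$ should follow from the Albanese universal property: a normal compact complex variety of dimension $\dim V$ admitting a finite surjection to a torus $V$ has an Albanese that is a torus isogenous to $V$, and the Albanese map must be bimeromorphic in this setting by rigidity.

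Once this structural step is available, the model-theoretic conclusion is straightforward. Pick an $M$-definable dual isogeny $q:V\to\widehat Y$ (such a $q$ exists because the group $\hom(V,\widehat Y)$ is discrete, so individual isogenies between $M$-definable tori are isolated and hence $M$-definable), satisfying $\widehat\pi\circ q=[N]_V\circ\tau$ for some integer $N\geq 1$ and some translation $\tau$. Given $b'\in\widehat Y$ lifting $b$ with $\widehat\pi(b')=a$, choose $c\in V$ with $q(c)=b'$; then $c\in[N]_V^{-1}(\tau^{-1}(a))$ is algebraic over $a$, so $c\in\acl(X(M)a)$, and clearly $c\in X^k\subseteq\dcl(X)$. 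Since $q$ is $M$-definable, $b\in\dcl(Mb')=\dcl(M,q(c))\subseteq\dcl\!\bigl(M,\acl(X(M)a)\cap\dcl(X)\bigr)$, as required.

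The main obstacle is the structural step itself: identifying $V$ as a translate of a subtorus of $X^k$, and identifying $\widehat Y$ as (bimeromorphic to) a torus isogenous to $V$. Both conclusions genuinely fail for algebraic tori in general — a higher-genus curve can be a finite cover of an elliptic curve — so the argument must exploit both simplicity and nonalgebraicity of $X$, via the absence of nontrivial meromorphic fibrations from $X$ and the Albanese universal property. I expect the author either to cite the relevant classical theorems on complex tori or to spell out these bimeromorphic rigidity results directly; the rest of the argument is a standard model-theoretic unwinding once the geometric structure is in place.
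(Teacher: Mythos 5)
Your skeleton matches the paper's: after Stein factorisation and normalisation you have a finite surjective map $\widehat\pi:\widehat Y\to V$ with $\widehat Y$ normal and $V$ a (translate of a) torus, you want $\widehat\pi$ to be an isogeny of tori, and you then use multiplication by the order of the kernel to produce a definable inverse witnessing internality. But the step you yourself flag as ``the main obstacle'' is a genuine gap, and the justification you sketch for it does not work. The Albanese property only gives a factorisation $\widehat Y\to\operatorname{Alb}(\widehat Y)\to V$ up to translation; it gives no control over the Albanese map itself, and there is no general ``rigidity'' forcing it to be bimeromorphic --- that is exactly what fails for the genus-two double cover of an elliptic curve you mention, and nothing you have said rules out the analogous behaviour here. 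The paper's actual mechanism is the purity of branch locus theorem: since $\widehat Y$ is normal and the base is smooth, the ramification locus of the finite map $\widehat\pi$ is empty or of pure codimension one, so its image would be a codimension-one analytic subset of the base; but the base is built from a simple nonalgebraic torus $X$, which has $\dim X\geq 2$ and all of whose analytic subsets of cartesian powers have dimension a multiple of $\dim X$, so no such divisor exists. Hence $\widehat\pi$ is unramified, and covering space theory (not the Albanese) makes $\widehat Y$ a torus and $\widehat\pi$ an isogeny. This is the one place where simplicity and nonalgebraicity enter, and it is missing from your argument.

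A secondary gap is definability. Your $V$ and $\widehat Y$ are nonstandard closed sets, so ``$\widehat Y$ is a torus'' must mean that it carries an $M$-definable group structure, and your assertion that the dual isogeny $q$ is $M$-definable ``because $\hom(V,\widehat Y)$ is discrete'' presupposes exactly that. The paper devotes a separate argument (Claim~\ref{uniform-group}) to producing the group structure definably --- identifying its graph with a connected component of $\widehat\pi^{-1}(\Gamma)$, where $\Gamma$ is the graph of addition on the base, after a finite base change --- and something of this kind is needed in your setting too, though working over a model $M$ softens the uniformity issue. Note also that the paper avoids your appeal to the classification of subvarieties of $X^k$ as translates of subtori by first replacing the tuple from $X$ with an $\acl$-basis, so that the base of the cover is literally $S\times X^\ell$; your route is workable but adds a further unproved structural claim.
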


\begin{proof}
See \cite{pillay00} for the basics of the model theory of complex tori.

Suppose $\tp(a/A)$ is stationary and 
almost $X$-internal.
We need to show that it is $X$-internal outright.
There exists $B\supseteq A$ with $a\ind_AB$, and a tuple $c$ from $X(\A')$ such that $a\in\acl(Bc)$.
As $X$ is strongly minimal 
we may replace $c$ by an $\acl$-basis for $c$ over $B$; that is, we may assume $c$ is a $B$-independent $\ell$-tuple of generic points in $X$ over $B$.
We will show that $\tp(ac/B)$ is $X$-internal -- this will suffice as it implies that $\tp(a/B)$ is $X$-internal, and $\tp(a/B)$ is the nonforking extension of $\tp(a/A)$.

Extending $B$ we may assume that $\tp(ac/B)$ is stationary.
Replacing $B$ by the canonical base of $\tp(ac/B)$, we may assume that $B=b$ is a finite tuple.
Let $S:=\loc(b)$ and $Y:=\loc(bac)$.
Note that $\loc(bc)=S\times X^\ell$.
Co-ordinate projections yield the following  commuting diagram of surjective morphisms:
$$\xymatrix{
Y\ar[rd]\ar[rr]^\pi &&S\times X^\ell\ar[dl]\\
&S
}$$
Note that $\pi$ is generally finite-to-one since $a\in\acl(bc)$.
Since $\tp(ac/b)$ is the generic type of the generic fibre of $Y\to S$, to prove that it is $X$-internal it will suffice to prove that, possibly after base change, there is a dominant meromorphic map back from $S\times X^\ell$ to $Y$ over $S$.
This is what we now do.

Consider the Stein factorisation of $\pi$,
$$\xymatrix{
Y\ar[rd]\ar[r]^f &\widetilde Y\ar[d]\ar[r]^{\widetilde{\pi}\ \ \ \ \ \ }&S\times X^\ell\ar[dl]\\
&S
}$$
where $f$ is a bimeromorphism and $\widetilde\pi$ is now finite-to-one everywhere.
Next we take a normalisation of $\widetilde Y$ to get
$$\xymatrix{
&\widehat Y\ar[d]\ar[dr]^{\widehat\pi}\\
Y\ar[rd]\ar[r]^f &\widetilde Y\ar[d]\ar[r]^{\widetilde{\pi}\ \ \ \ \ \ }&S\times X^\ell\ar[dl]\\
&S
}$$
where $\widehat Y$ is normal and $\widehat\pi$ remains a finite morphism.

\begin{claim}
\label{uniform-group}
Possibly after a (finite) base change, the general fibres of $\widehat Y\to S$ are complex tori with a uniformly definable group structure, and $\widehat\pi$ restricted to these fibres is an isogeny.
\end{claim}

\begin{proof}[Proof of Claim~\ref{uniform-group}]
We are grateful to Fr\'ed\'eric Campana for some useful correspondence around these issues.

Fixing a general $s\in S$ we have that $\widehat Y_s$ is normal~\cite[Th\'eor\`eme 2]{banica79} and connected, $X^\ell$ is smooth and $\widehat\pi_s:\widehat Y_s\to X^\ell$ is finite.
It follows by the purity of branch locus theorem~\cite[Theorem~VII.1.6]{scv7} that the ramification locus of $\widehat\pi_s$ is either empty or complex analytic of codimension $1$ in $\widehat Y_s$.
In the latter case the image of this ramification locus would have to be a codimension $1$ complex analytic subset of $X^\ell$, but the fact that $X$ is strongly minimal with $\dim X>1$ makes this impossible (any complex analytic subset of $X^\ell$ has dimension a multiple of $\dim X$).
Hence $\widehat\pi_s$ is an unramified covering.
It follows that $\widehat Y_s$ itself has the structure of a complex torus such that $\widehat\pi_s:\widehat Y_s\to X^\ell$ is an isogeny.

We have not yet proved that this  group structure on $\widehat Y_s$ with respect to which $\widehat\pi_s$ is an isogeny is {\em uniform} in $s$.
Note,
however, that if $\Gamma\leq (X^\ell)^3$ is the graph of the group structure on $X^\ell$, then for each general $s\in S$, $\widehat\pi_s^{-1}(\Gamma)\leq\widehat Y_s^3$ is a possibly disconnected subtorus that is uniformly definable in $s$.
Moreover, since $\widehat\pi_s:\widehat Y_s\to X^\ell$ is finite, $\dim\widehat\pi_s^{-1}(\Gamma)=\dim\Gamma=2\ell \dim X$.
Now, the graph of the (not yet known to be uniformly definable) group operation on $\widehat Y_s$ is of dimension $2\dim\widehat Y_s=2\ell\dim X$ also, and it is a connected subtorus of $\widehat\pi_s^{-1}(\Gamma)$.
It must therefore be the connected component of identity.
This is still not enough because we do not yet know that the identity of $\widehat Y_s$ can be picked out definably in~$s$.
However, it does follow that each connected component of $\widehat\pi_s^{-1}(\Gamma)$ is the graph of {\em some} group structure on $\widehat Y_s$ with respect to which $\widehat\pi_s$ is an isogeny.
Indeed, if $H$ is any such connected component then it is a translate of the graph of the group operation on $Y_s$ by some $(a,b,c)\in\widehat\pi_s^{-1}(\Gamma)$.
So $H$ is itself the graph of a new group operation on $Y_s$, namely $x\oplus y:=x+y-(a+b-c)$.
That $\widehat\pi_s$ remains an isogeny with respect to this new group operation follows from the fact that $\widehat\pi_s(a)+\widehat\pi_s(b)=\widehat\pi_s(c)$.

So it suffices to show that after base change 
the connected components of $\widehat\pi_s^{-1}(\Gamma)$ are uniformly definable.
This we now point out.
Let $Z\subset\widehat Y\times_S\widehat Y\times_S\widehat Y$ be a maximal dimensional irreducible component of $\widehat\pi^{-1}(S\times\Gamma)$ that projects onto $S$.
Let
$$\xymatrix{
Z\ar[r] &T\ar[r]^{\theta} &S
}$$
be the Stein factorisation of $Z\to S$.
So for general $t\in T$, $Z_t$ is a connected component of $\widehat\pi_s^{-1}(\Gamma)$ where $s=\theta(t)$.
Taking the finite base change $\theta:T\to S$, we obtain $\widehat\pi_T:\widehat Y_T\to T\times X^\ell$. Then for general $t \in T$ we have that $\widehat\pi_t^{-1}(\Gamma)$ is connected, and so defines, uniformly in $t$, a group structure with respect to which $\widehat\pi_t$ is an isogeny, as required.
\end{proof}

By the Claim, we may assume that the group structure on the general fibres $\widehat Y_s$ with respect to which $\widehat\pi_s$ is an isogeny is uniformly definable over~$s$.
Let $n$ be the size of the kernel of $\widehat\pi_s$ for $s\in S$ general.
We can define $\rho:S\times X^\ell\to \widehat Y$ over $S$ to be the meromorphic map such that for general $s\in S$, $\rho_s\circ\widehat\pi_s$ is multiplication by $n$ on $\widehat Y_s$, by setting $\rho_s(x) := ny$ for any $y \in \widehat\pi_s^{-1}(x)$.
Finally, looking at the diagram before the statement of Claim~\ref{uniform-group}, we see that $\widehat Y$ admits a dominant meromorphic map onto $Y$ over $S$ since $f$ is bimeromorphic.
We therefore have a dominant meromorphic map from $S\times X^\ell$ to $Y$ over $S$, witnessing that $\tp(a/A)$ is indeed internal to $X$.
\end{proof}

To complete the proof of Theorem~\ref{purity} we need to deal with one more case.

\begin{proposition}
  \label{simpConnIntern}
  Let $X$ be a simply connected nonalgebraic strongly minimal compact complex variety.
  Then $X$ internalises finite covers.
\end{proposition}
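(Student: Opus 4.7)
The plan is to follow the template of Proposition~\ref{toruscover}, replacing the appeal to the complex-torus group structure with an appeal to simple-connectedness of $X$. I would begin with a stationary almost $X$-internal type $\tp(a/A)$, choose $B \supseteq A$ independent from $a$ over $A$ together with an $\acl$-independent $\ell$-tuple $c$ of generic points of $X$ over $B$ such that $a \in \acl(Bc)$, and reduce as in that earlier proof to the task of showing that $\tp(ac/B)$ is $X$-internal for $B=b$ a finite tuple with $\tp(ac/b)$ stationary. Setting $S := \loc(b)$ and $Y := \loc(abc)$ then yields a generically finite surjection $\pi : Y \to S \times X^\ell$, and it will suffice to prove that $\pi$ is bimeromorphic, since then $a \in \dcl(bc)$.

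I would next take the Stein factorization of $\pi$ followed by a normalization, obtaining a commutative diagram with $f : Y \to \widetilde Y$ a bimeromorphism, a finite bimeromorphism $\widehat Y \to \widetilde Y$, and a finite surjection $\widehat\pi : \widehat Y \to S \times X^\ell$. For general $s \in S$, stationarity of $\tp(ac/b)$ forces $Y_s$, and hence $\widetilde Y_s$, to be irreducible; Banica's theorem then gives normality of $\widehat Y_s$, and standard generic compatibility of normalization with base change yields that $\widehat Y_s$ is irreducible, hence connected.

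The heart of the argument will be to apply the purity of branch locus theorem to the finite map $\widehat\pi_s : \widehat Y_s \to X^\ell$, exactly as in Proposition~\ref{toruscover}: since $X$ is strongly minimal with $\dim X \geq 2$ (a $1$-dimensional compact complex variety is a Riemann surface, hence algebraic, contradicting our hypothesis), every proper analytic subset of $X^\ell$ has codimension at least $\dim X \geq 2$, so the image of any pure-codimension-one ramification locus cannot fit inside $X^\ell$. Thus $\widehat\pi_s$ is an unramified finite cover of $X^\ell$. Since $X$ is simply connected, so is $X^\ell$ as a product of simply connected spaces, and therefore every connected unramified finite cover of $X^\ell$ is a biholomorphism; connectedness of $\widehat Y_s$ then forces $\widehat\pi_s$ to be an isomorphism.

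Having established that $\widehat\pi_s$ is an isomorphism for general $s$, it follows that $\widehat\pi$, and hence $\widetilde\pi$, and hence $\pi$, is bimeromorphic, giving $a \in \dcl(bc)$ as desired. The step I expect to be most delicate is securing connectedness of the generic fibre $\widehat Y_s$: irreducibility of $Y_s$ comes cleanly from stationarity, but transporting this through the Stein factorization and normalization requires generic compatibility of normalization with base change, and if this proves insufficient I would fall back on performing a further Stein factorization of $\widehat Y \to S$ together with a harmless finite base change on $S$ to ensure that the connected components of $\widehat Y_s$ are picked out uniformly in $s$.
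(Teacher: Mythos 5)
Your proposal is correct and follows essentially the same route as the paper: the paper likewise reduces, exactly as in the first part of the proof of Proposition~\ref{toruscover} (Stein factorisation, normalisation, purity of branch locus plus strong minimality with $\dim X\geq 2$ to get an unramified finite cover $\widehat\pi_s:\widehat Y_s\to X^\ell$ with irreducible general fibre), and then concludes from simple-connectedness of $X^\ell$ that $\widehat\pi_s$ is an isomorphism, so that $\widehat\pi^{-1}$ gives the required dominant meromorphic section. Your handling of the connectedness of $\widehat Y_s$ via stationarity and generic compatibility of normalisation with base change is the same mechanism the paper relies on implicitly in Proposition~\ref{toruscover}.
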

\begin{proof}
  The essential point is just that there are no finite covers to internalise.
  This is a uniform version of Lemma~\ref{lem:Bacldcl}.

  As in the first part of the proof of Proposition~\ref{toruscover},
    it suffices to see that for any irreducible $S$,
    if $\widehat\pi : \widehat Y \rightarrow S \times X^\ell$ is a finite cover over $S$ such that for general $s\in S$ the fibre $\widehat Y_s$ is irreducible
    and the map $\widehat\pi_s : \widehat Y_s \rightarrow X^\ell$ is unramified,
    then there is a dominant meromorphic map from $S\times X^\ell$ to $\widehat Y$ over $S$.
    But $X^\ell$ is simply connected since $X$ is,
    so $\widehat\pi_s$ is an isomorphism for general $s\in S$.
  Hence $\widehat\pi^{-1}$ will do.
\end{proof}

\bigskip
\section{Finite-dimensional types}
\label{findim-section}

\noindent
Everything we do in this section could be done in the more general setting of $TA$ for $T$ the theory of an $\omega_1$-compact noetherian topological structure with quantifier elimination and in which  irreducibility is definable.
But we stick to $\ccma$ for the sake of concreteness.

Fix a sufficiently saturated $(\A',\sigma)\models\ccma$.

%Beyond the general properties discussed in the previous section, this structure is also endowed with a natural topology. In fact, $\sigma$ induces a refined topology on any closed set $F$ in $\A'$, with the basic {\em $\sigma$-closed} subsets being of the form\marginpar{\tiny Is the topology noetherian?} $$(F,G)^\sharp:=\{a\in F:(a,\sigma a,\dots,\sigma^n a)\in G\}$$ where $G$ is a closed subset of $F\times F^{\sigma}\times\cdots\times F^{\sigma^n}$, and $n\geq 0$.

\begin{definition}
By a {\em $\ccm$-$\sigma$-variety} is meant a pair $(F,G)$ where $F$ is an irreducible closed set in $\A'$ and $G\subseteq F\times F^\sigma$ is an irreducible closed subset whose projections to both $F$ and $F^\sigma$ are surjective and generically finite-to-one.
This data gives rise to the following definable set in $(\A',\sigma)$
$$(F,G)^\sharp:=\{a\in F:(a,\sigma a)\in G\} .$$
\end{definition}

Note that $F^\sigma:=\sigma(F)$ is again a closed set in $\A'$; if $F=X_a$ where $X\subset S\times Z$ is a complex analytic subset of a product of compact complex varieties and $a\in S(\A')$, then $F^\sigma=X_{\sigma(a)}$.

We can associate to a meromorphic dynamical system $(X,f)$ the
$\ccm$-$\sigma$-variety $(F,G)$ where $F=X(\mathcal A')$ and $G$ is the (set
of $\mathcal A'$-points of the) graph of $f$. In fact, a
$\ccm$-$\sigma$-variety should be viewed as the generalisation of a
meromorphic dynamical system where we allow finite-to-finite meromorphic
correspondences in place of dominant meromorphic maps.

$\ccmsigma$-varieties give rise to finite-dimensional types, in the following
sense.

\begin{definition}
\label{def-fd}
Given an inversive (i.e., closed under $\sigma$ and $\sigma^{-1}$) set $A$, and a tuple $a$, we set
$$\dim_{\sigma}(a/A):=\big(\dim\loc((a,\sigma a,\dots,\sigma^na)/A)\big)_{n<\omega}$$
where by $\dim$ is meant the complex-analytic dimension.
We say that $\tpsigma(a/A)$ is {\em finite-dimensional} if $\dimsigma(a/A)$ is eventually constant, in which case we often write $\dimsigma(a/A)=d$ where $d$ is that eventual value.
A definable set is said to be {\em finite-dimensional} if every complete type over an inversive parameter set extending the definable set is finite-dimensional.
\end{definition}

Note that this dimension witnesses independence: for all inversive $B\supseteq A$,
\begin{eqnarray}
\label{indcma}
a\ind^{\ccma}_A B &\iff& \dimsigma(a/B)=\dimsigma(a/A)
\end{eqnarray}
This follows rather easily from~(2) and~(3) of Section~\ref{ta-section}, together with the fact that complex analytic dimension witnesses independence in $\ccm$.

\begin{lemma}
\label{dimind}
Suppose $(F,G)$ is a $\ccmsigma$-variety over inversive $A$.
Then there exists $c\in (F,G)^\sharp$ that is $\ccm$-generic in $F$ over $A$.
Such a point satisfies:
\begin{itemize}
\item[(i)]
$\tpsigma(c/A)$ is finite-dimensional with $\dimsigma(c/A)=\dim F$
\item[(ii)]
for all inversive $B\supseteq A$, $\displaystyle c\ind^{\ccma}_AB$ if and only if $\loc(c/B)=F$.
\end{itemize}
\end{lemma}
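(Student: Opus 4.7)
The plan has three steps: existence of $c$, the dimension computation for (i), and deducing (ii) from (i). For existence, I would appeal to the axiomatisation of $\ccma$ from~\cite{bgh}: for every $\ccmsigma$-variety $(F,G)$ over a (small) inversive parameter set $A$, the set $(F,G)^\sharp$ meets the $\ccm$-generic locus of $F$ over $A$; saturation of $(\A',\sigma)$ then produces the desired $c$.

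For (i), I would first observe that $(c,\sigma c)$ is $\ccm$-generic in $G$ over $A$. Indeed, if it lay in some $A$-closed $G'\subsetneq G$ then, by irreducibility, $\dim G'<\dim G=\dim F$ (the equality because $G\to F$ is generically finite-to-one), and since $G\to F$ has finite generic fibres, the closure of $\pi(G')$ is a proper $A$-closed subset of $F$ containing $c$, contradicting $\loc(c/A)=F$. Now set $F_n:=\loc\big((c,\sigma c,\dots,\sigma^n c)/A\big)$; I would induct on $n$ to show $\dim F_n=\dim F$. The projection $\pi_n:F_n\to F_{n-1}$ onto the first $n$ coordinates has dense image, so $\dim F_n\ge\dim F_{n-1}$. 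For the reverse inequality, apply the $L$-automorphism $\sigma^{n-1}$—which setwise fixes the inversive set $A$—to the $A$-generic pair $(c,\sigma c)\in G$: this shows that $(\sigma^{n-1}c,\sigma^n c)$ is $A$-generic in $G^{\sigma^{n-1}}$, and since $G^{\sigma^{n-1}}\to F^{\sigma^{n-1}}$ is generically finite-to-one (as $G\to F$ is), the generic fibre of $\pi_n$ is finite. Hence $\dim F_n=\dim F_{n-1}$, and by induction $\dimsigma(c/A)=\dim F$.

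For (ii), I would invoke the characterisation~(\ref{indcma}): $c\ind^{\ccma}_AB$ iff $\dimsigma(c/B)=\dim F$. If $\loc(c/B)=F$, then $c$ is $\ccm$-generic in $F$ over the inversive set $B\supseteq A$, and since $(F,G)$ remains a $\ccmsigma$-variety over $B$, the argument of (i) applied with $B$ in place of $A$ yields $\dimsigma(c/B)=\dim F$. Conversely, if $\dimsigma(c/B)=\dim F$, then in particular the zeroth entry $\dim\loc(c/B)=\dim F$; combined with the inclusion $\loc(c/B)\subseteq\loc(c/A)=F$ and irreducibility of $F$, this forces $\loc(c/B)=F$. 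The main subtlety is ensuring the axiom scheme of~\cite{bgh} directly produces a point generic in $F$ over $A$ and tracking the action of $\sigma$ on loci over inversive parameters; the rest is routine dimension bookkeeping modelled on the analogous argument for $\acfa$.
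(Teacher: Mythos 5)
Your proposal is correct and follows essentially the same route as the paper: existence via the axioms of $\ccma$ from~\cite{bgh} plus saturation, and then the observation that generic finiteness of $G\to F$ forces $\sigma^n(c)$ into $\acl(Ac)$, which is exactly what your induction on $\dim F_n$ amounts to (the paper states this directly as $\sigma^n(c)\in\acl(Ac)$ rather than via loci). Part (ii) is likewise deduced from~(\ref{indcma}) and irreducibility of $F$ in both arguments.
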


\begin{proof}
That we can find $c\in (F,G)^\sharp$ avoiding any particular proper $A$-closed subset of $F$ follows from the axiomatisation of $\ccma$ given in~\cite[Proposition~5.2]{bgh}.
By saturation we get a generic of $F$ over $A$ in $(F,G)^\sharp$.
(Note that this does not use the fact that the projections are finite-to-one, only that they are surjective.)

For~(i), note that $c$ being generic in $F$ and $(c,\sigma(c))\in G$ implies that $\sigma(c)\in\acl(Ac)$.
Applying $\sigma$ repeatedly gives $\sigma^{n}(c)\in\acl(Ac)$ for all $n$.
Hence $\dimsigma(c/A)=(\dim F,\dim F,\dots)$.

Part~(ii) now follows.
Indeed, from~(\ref{indcma}), we have that $\displaystyle c\ind^{\ccma}_AB$ if and only if $\dim\loc\big((c,\sigma c,\dots,\sigma^nc)/B\big)=\dim F$ for all $n$, if and only if $\dim\loc(c/B)=\dim F$.
By irreducibility of $F$, the latter is equivalent to $\loc(c/B)=F$.
\end{proof}

\begin{remark}
The (absolute) irreducibility of $F$ and $G$ in the definition of a $\ccmsigma$-variety is essential.
Suppose $F_1$ and $F_2$ are disjoint irreducible closed sets such that $F_1^\sigma=F_2$ and $F_2^\sigma=F_1$. Let $F=F_1\cup F_2$ and let $G$ be the union of the diagonals in $F_1\times F_1$ and $F_2\times F_2$. Then $G$ projects generically finite-to-one onto both $F$ and $F^\sigma=F$, but $(F,G)^\sharp=\emptyset$.
If in addition $F$ is $A$-definable but $F_1$ is not $A$-definable, then $F$ and $G$ will even be $A$-irreducible.
\end{remark}

We conclude this section by pointing out that $\ccm$-$\sigma$-varieties capture all the finite-dimensional types in $\ccma$.

\begin{lemma}
\label{findim}
Suppose $A$ is $\aclsigma$-closed and $c$ is such that $\dimsigma(c/A)$ is finite.
Then there exists $N\geq 0$ and a $\ccmsigma$-variety $(F,G)$ over $A$, such that $(c,\sigma c,\dots,\sigma^Nc)$ is generic in $F$ over $A$ and contained in $(F,G)^\sharp$.
\end{lemma}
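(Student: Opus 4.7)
The plan is to build $(F,G)$ directly as loci of long enough initial segments of the $\sigma$-orbit of $c$. Since $\dimsigma(c/A)$ is finite, it is eventually constant: choose $N$ so that
\[
\dim\loc\bigl((c,\sigma c,\dots,\sigma^Nc)/A\bigr)=\dim\loc\bigl((c,\sigma c,\dots,\sigma^{N+1}c)/A\bigr).
\]
Set $\bar c:=(c,\sigma c,\dots,\sigma^Nc)$ and define $F:=\loc(\bar c/A)$ and $G:=\loc\bigl((\bar c,\sigma\bar c)/A\bigr)$. Since $\sigma\bar c=(\sigma c,\dots,\sigma^{N+1}c)$ and $A$ is inversive, $F^\sigma=\loc(\sigma\bar c/A)$, and one readily checks $G\subseteq F\times F^\sigma$. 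By construction $\bar c\in(F,G)^\sharp$ and $\bar c$ is $\ccm$-generic in $F$ over $A$, which will give both clauses of the conclusion once $(F,G)$ is verified to be a $\ccmsigma$-variety.

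Next I would check that the projections $\pi_1:G\to F$ and $\pi_2:G\to F^\sigma$ are surjective and generically finite-to-one. Surjectivity is automatic: the image of $\pi_1$ is an $A$-closed subset of $F$ containing the $A$-generic point $\bar c$, hence equals $F$; symmetrically for $\pi_2$ via $\sigma\bar c$. For the finite-to-one condition, note that $G$ is, via the bijection forgetting repeated coordinates, in definable bijection with $\loc\bigl((c,\sigma c,\dots,\sigma^{N+1}c)/A\bigr)$, so by the choice of $N$ we have $\dim G=\dim F=\dim F^\sigma$; a surjective morphism of irreducible closed sets of equal dimension is generically finite-to-one.

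It remains to verify that $F$ and $G$ are absolutely irreducible. Here I would invoke the hypothesis that $A$ is $\aclsigma$-closed: by property~(1) from Section~\ref{ta-section}, this forces $A$ to be inversive and $\acl$-closed in the $\ccm$-sense. Since $\ccm$ is stable with elimination of imaginaries, types over $\acl$-closed sets are stationary, and stationarity of a $\ccm$-type is equivalent to absolute irreducibility of its locus (cf.\ the discussion in~\cite[$\S2$]{ret}). Applying this to $\bar c$ and to $(\bar c,\sigma\bar c)$ gives that $F$ and $G$ are absolutely irreducible; and $F^\sigma=\sigma(F)$ is absolutely irreducible because $\sigma$ is an $L$-automorphism of $\A'$ preserving the analytic structure.

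The only subtle point I anticipate is this last absolute-irreducibility issue: one must be careful that the definition of $\ccmsigma$-variety demands absolute irreducibility rather than $A$-irreducibility, and this is exactly what the $\aclsigma$-closure hypothesis on $A$ buys. Once this is in hand, the remaining verifications are formal, and parts (i)--(ii) of Lemma~\ref{dimind} give the ``generic in $F$'' and finite-dimensionality content immediately.
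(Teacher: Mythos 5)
Your proposal is correct and follows essentially the same route as the paper's proof: choose $N$ where the dimension sequence stabilises, take $F$ and $G$ to be the loci of $\bar c$ and $(\bar c,\sigma\bar c)$ over $A$, get irreducibility from $A=\aclsigma(A)$, and deduce generic finiteness of the projections from the equality of dimensions. You merely spell out the irreducibility and surjectivity steps in slightly more detail than the paper does.
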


\begin{proof}
Let $N$ and $d$ be such that
$$d=\dim\loc\big((c,\sigma c,\dots,\sigma^Nc)/A\big)=\dim\loc\big((c,\sigma c,\dots,\sigma^{N+1}c)/A\big)$$
Let $\overline c:=(c,\sigma c,\dots,\sigma^Nc)$, $F:=\loc(\overline c/A)$ and $G:=\loc\big((\overline c,\sigma\overline c)/A\big)\subseteq F\times F^\sigma$.
The assumption that $A=\aclsigma(A)$ ensures that $F$ and $G$ are irreducible.
Note that $\dim G=\dim\loc\big((c,\sigma c,\dots,\sigma^{N+1}c)/A\big)=d=\dim F=\dim F^\sigma$.
Moreover, both projections of $G$ are onto as $\overline c$ is generic in $F$ over $A$, and so $\sigma\overline c$ is generic in $F^\sigma$ over $A$ also.
Hence these projections must be generically finite-to-one.
\end{proof}

\begin{remark}
    If furthermore we take $N$ in Lemma~\ref{findim} large enough so that
    the number of realisations of $\tpsigma(\sigma^{N+1}c / A,c,...,\sigma^N c)$ is minimal, then the
    quantifier-free $\ccma$-type of $c$ over $A$ is determined by saying
    that $(c,\sigma c,\dots,\sigma^Nc)\in (F,G)^\sharp$ and $c$ is generic in $F$ over~$A$.
\end{remark}

\bigskip
\section{Difference-analytic jet spaces and the Zilber dichotomy}

\noindent
We now aim to prove the Zilber dichotomy for finite-dimensional minimal types in $\ccma$: either they are one-based or they are nonorthogonal to the fixed field (of the canonical interpretation of $\acfa$ in $\ccma$).
For $\acfa$ itself this was done first by Chatzidakis and Hrushovski in~\cite{acfa1}, but then a much simpler proof was given by Pillay and Ziegler in~\cite{pillayziegler03}.
It is this latter argument, which goes via an appropriate notion of ``jet space" and actually proves something stronger than the dichotomy (namely the Canonical Base Property or the~CBP), that we will follow here, and extend to all of $\ccma$.

In the appendix to this paper we have reviewed the uniformly definable construction of jet spaces in complex analytic geometry.
Given a holomorphic map $\pi:X\to S$ of compact complex varieties, there exists a $\ccm$-definable complex variety $\jet^n (X/S)\to X$ such that for all $x\in X$
$$\jet^n(X/S)_x=\jet^n(X_{\pi(x)})_x=\hom_{\mathbb C}(\mathfrak m_{X_{\pi(x)},x}/\mathfrak m_{X_{\pi(x)},x}^{n+1},\mathbb C)$$
and such that the vector space structure on these fibres is uniformly definable in $\ccm$.
Here $\mathfrak m_{X_{\pi(x)},x}$ denotes the maximal ideal of the local ring of germs of holomorphic functions on the fibre $X_{\pi(x)}$ at the point~$x$.
We point the reader to the appendix for further details.

The uniformity allows us to define jet spaces of nonstandard closed sets.

\begin{definition}
Suppose $F=X_a$ is a closed set in $\ccm$ where $X\subseteq S\times Z$ and $a\in S(\A')$.
Then by the {\em $n$th jet space of $F$}, which we will denote by $\jet^n (F)\to F$, we will mean the restriction of $\jet^n(X/S)(\A')\to X(\A')$ to $X_a$.
\end{definition}

The fibres of $\jet^n(F)\to F$ are therefore uniformly finite dimensional $K$-vector spaces, where recall that $(K,+,\times)$ is the interpretation of the complex field in $\A'$.
Moreover, by the functoriality of the construction of jet spaces, if $F$ is an irreducible closed subset of an irreducible closed set $G$, and $c\in F$, then $\jet^n(F)_c$ is naturally a $K$-linear subspace of $\jet^n(G)_c$.
The main point of this construction is the following proposition which says these jet spaces yield a linearisation of irreducible closed sets.

\begin{proposition}
\label{lineariseccm}
Suppose $S$ and $Z$ are compact complex varieties and $X\subset S\times Z$ is a subvariety.
There exists $n>0$ such that for all $a,b\in S(\mathcal A')$, if $X_a$ and $X_b$ are irreducible, pass through some $c\in Z(\mathcal A')$, and have $\jet^n(X_a)_c=\jet^n(X_b)_c$ as $K$-subspaces of $\jet^n(Z)_c$, then $X_a=X_b$
.
\end{proposition}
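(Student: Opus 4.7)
The plan is to reduce the statement to the standard model $\A\models\ccm$ and then combine noetherianity with the classical fact that the formal germ of an irreducible analytic subvariety determines it. For each fixed $n$, the assertion of the proposition is a first-order sentence in CCM, since the jet-space construction, the irreducibility of $X_a$ in $a$, and the equality $X_a=X_b$ in $(a,b)$ are all definable. It therefore suffices to exhibit a single $n_0$ for which the conclusion holds in $\A$; by elementarity, that $n_0$ will then witness the statement in $\A'$.

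Working in $\A$, I would consider the family
\[
F_n := \{(a,b,c) \in S \times S \times Z : c \in X_a \cap X_b \text{ and } \jet^n(X_a)_c = \jet^n(X_b)_c\}.
\]
Since the canonical inclusion $\jet^m(Z)_c \hookrightarrow \jet^n(Z)_c$ for $m\le n$ intersects $\jet^n(X_a)_c$ in $\jet^m(X_a)_c$, agreement of jets at level $n$ forces agreement at every lower level, so the $F_n$ form a descending chain of closed analytic subsets. By noetherianity on the compact complex variety $S \times S \times Z$, the chain stabilises at some $n_0$; set $F_\infty := F_{n_0} = \bigcap_n F_n$.

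The analytic heart of the argument is the claim that on the irreducible locus $F_\infty$ coincides with $\{(a,b,c) : X_a = X_b,\ c \in X_a\}$. Given $(a,b,c) \in F_\infty$ with $X_a$ and $X_b$ irreducible, the jet condition at level $n$ translates to
\[
I(X_a, c) + \mathfrak{m}_{Z,c}^{n+1} = I(X_b, c) + \mathfrak{m}_{Z,c}^{n+1}
\]
in $\O_{Z,c}$, where $I(X_a,c)$ denotes the ideal at $c$ of the germ of $X_a$. Since this holds for every $n$, the ideals $I(X_a,c)$ and $I(X_b,c)$ generate the same ideal in the formal completion $\widehat{\O}_{Z,c}$; by faithful flatness of $\O_{Z,c}\hookrightarrow\widehat{\O}_{Z,c}$ they already coincide in $\O_{Z,c}$, and the analytic germs of $X_a$ and $X_b$ agree at $c$. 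As $X_a \cap X_b$ is then a closed analytic subset of the irreducible $X_a$ whose germ at $c$ has dimension $\dim X_a$, irreducibility forces $X_a \cap X_b = X_a$, and symmetrically $X_a = X_b$.

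I expect the main technical obstacle to be ensuring that the $F_n$ genuinely form a decreasing chain of closed analytic subsets, since the dimension of $\jet^n(X_a)_c$ can jump at singular points. One way to handle this is to reformulate the equality of jet subspaces via the relative jet bundle $\jet^n(X/S)\subseteq S\times\jet^n(Z)$ together with incidence conditions in an ambient relative Grassmannian over $Z$; this encodes the condition as a closed analytic one, leaving the noetherian descent and the analytic-local argument above unchanged.
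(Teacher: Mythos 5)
Your overall strategy coincides with the paper's: for each fixed $n$ the assertion is first-order, so it suffices to produce one $n_0$ that works in the standard model $\A$, and your ``analytic heart'' --- equality of jets at every level gives equality of the ideals modulo $\mathfrak m_{Z,c}^{n+1}$ for all $n$, hence equality of germs by Krull intersection, hence $X_a=X_b$ by irreducibility --- is exactly Lemma~\ref{jetsdetermine}. The gap is in how you extract the uniform $n_0$. Your noetherian descent needs each $F_n$ to be a \emph{closed analytic} subset of $S\times S\times Z$, and that is precisely what is not established (and is false in general): locally $\jet^n(X_a)_c$ is the kernel of a matrix of holomorphic functions of $(a,c)$, and equality of kernels of two holomorphically varying matrices is not a closed condition --- the fibre dimension of $\jet^n(X/S)$ jumps up along analytic subsets (singular points of the fibres, special values of the parameter), the two jet spaces may jump differently in a limit, so $F_n$ is in general only definable (constructible), not closed. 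The Grassmannian repair you sketch does not obviously help: $\jet^n(X/S)$ is a linear space, not a vector bundle, so there is no classifying map to a relative Grassmannian on all of $X$; restricting to the locus where it is a bundle and taking closures replaces fibrewise equality of jet spaces by equality of limit subspaces, which is a different condition and breaks the identification of $\bigcap_n F_n$ with the locus of germ equality. Nor can you simply run the chain argument for definable sets, since definable sets in $\ccm$ do not satisfy the descending chain condition (cofinite subsets of a strongly minimal sort already give an infinite strictly descending chain).

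The paper sidesteps the closedness issue by using $\omega_1$-compactness of $\A$ in place of noetherianity. For each $n$ let $\phi_n(s,s',z)$ express that $X_s$ and $X_{s'}$ are irreducible, pass through $z$, satisfy $\jet^n(X_s)_z=\jet^n(X_{s'})_z$, but $X_s\neq X_{s'}$; these are definable conditions (no closedness needed), and $\phi_n\rightarrow\phi_m$ for $m\leq n$, so they are nested. If every $\phi_n$ were satisfiable in $\A$, countable compactness of the standard model would yield a single $(s,s',z)$ satisfying all of them, i.e.\ with equal jets at every level but $X_s\neq X_{s'}$, contradicting Lemma~\ref{jetsdetermine}; hence some $\phi_{n_0}$ is unsatisfiable, which is the desired uniform bound. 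If you replace your stabilisation-of-$F_n$ step by this compactness argument, the rest of your proposal goes through unchanged.
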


\begin{proof}
The statement that is claimed by the proposition (for fixed $S,Z,X$) is first-order in the language of $\ccm$ (using the fact that irreducibility is definable), and hence it suffices to prove that it is true in the standard model.
Now, working in the standard model, by Lemma~\ref{jetsdetermine}, it is the case that for any $s,s'\in S(\mathcal A)$ if $X_s$ and $X_{s'}$ are irreducible and pass through $z\in Z(\mathcal A)$ with $\jet^n(X_s)_z=\jet^n(X_{s'})_z$ for {\em all} $n\geq 0$, then $X_s=X_{s'}$.
To see that we can uniformly bound $n$ is a straightforward compactness argument:
For each $n>0$, let $\phi_n(s,s',z)$ be the formula expressing that $X_s$ and $X_{s'}$ are irreducible, pass through $z$, satisfy $\jet^n(X_s)_z=\jet^n(X_{s'})_z$ but $X_s\neq X_{s'}$.
We are using here the definability of irreducibility and the fact that the jet spaces are uniformly definable in order to write down $\phi_n$.
Note that $\phi_n(s,s',z)\rightarrow\phi_m(s,s',z)$ for all $m\leq n$.
What we need to prove is that for some $n$, $\phi_n$ is not satisfiable in $\A$.
But by $\omega_1$-compactness of $\A$, if each $\phi_n$ were satisfiable then we would find $s,s',z$ such that $X_s\neq X_{s'}$ but $\jet^n(X_s)_z=\jet^n(X_{s'})_z$ for all $n\geq 0$, contradicting what was previously established.
\end{proof}

Now we pass to $\ccma$ and define jet spaces there using the nonstandard complex analytic jet spaces described above, in very much the same way that Pillay and Ziegler use algebraic jet spaces to define jet spaces for finite-dimensional difference-algebraic varieties in~\cite[$\S$4]{pillayziegler03}.

Work in a sufficiently saturated model $(\A',\sigma)\models\ccma$, and 
fix a $\ccmsigma$-variety $(F,G)$ defined over $B=\acl(B)$.
We can arrange things so that $F=X_a$ and $G=W_{(a,\sigma(a))}\subset X_a\times X_{\sigma(a)}$ where
\begin{itemize}
\item
$a$ is a tuple from $B$ that is a generic point of a compact complex variety $S$,
\item
$X\to S$ is a {\em fibre space}; that is, the general fibres are irreducible,
\item
$(a,\sigma(a))$ is a generic point of a subvariety $T\subseteq S^2$, and
\item
$W\subseteq X^2$ is a subvariety such that the induced map $W\to T$ is a fibre space.
\end{itemize}
The fact that $G$ projects generically finite-to-one onto $F$ and $F^\sigma$ implies, by definability of dimension, that in the standard model, for general $t=(s_1,s_2)\in T(\mathcal A)$, $W_t\subseteq X_{s_1}\times X_{s_2}$ projects generically finite-to-one onto each co-ordinate.
It follows that for any $n>0$ and  for general $x_1\in X_{s_1}(\mathcal A)$ and $x_2\in X_{s_2}(\mathcal A)$, $\jet^n(W_t)_{(x_1,x_2)}$ is the graph of a $\mathbb C$-linear isomorphism from $\jet^n(X_{s_1})_{x_1}$ to $\jet^n(X_{s_2})_{x_2}$.
(For this latter property of jet spaces, see for example Lemma~5.10 of~\cite{paperA}, which works in the algebraic setting but goes through in our analytic setting.)
As this is a definable property we get for $c\in (F,G)^\sharp$, with $c$ generic in $F$ over $B$, that $\jet^n(G)_{(c,\sigma(c))}$ is the graph of a ($\ccm$-definable) $K$-linear isomorphism $g:\jet^n(F)_c\to\jet^n(F^\sigma)_{\sigma(c)}$.

\begin{definition}
Suppose $(F,G)$ is a $\ccmsigma$-variety over $B=\acl(B)$, and $c\in (F,G)^\sharp$ is generic in $F$ over $B$.
By the {\em $n$th jet space of $(F,G)^\sharp$ at $c$} we mean the $Bc$-definable $\fix(K,\sigma)$-vector subspace
$$\jet^n(F,G)^\sharp_c:=\big(\jet^n(F)_c,\jet^n(G)_{(c,\sigma(c))}\big)^\sharp=\{v\in \jet^n(F)_c:\sigma(v)=g(v)\}$$
\end{definition}

Note that $g^{-1}\sigma:\jet^n(F)_c\to \jet^n(F)_c$ endows $\jet^n(F)_c$ with the structure of a {\em $\sigma$-module} over the difference field $(K,\sigma)$;
namely $g^{-1}\sigma(rv)=g^{-1}\big(\sigma(r)\sigma(v)\big)=\sigma(r)g^{-1}\sigma(v)$ for all $r\in K$ and $v\in \jet^n(F)_c$.
This uses the fact that scalar multiplication on $\jet^n(F)_c$ is obtained by the restriction of the $0$-definable scalar multiplication on $\jet^nX$, and hence commutes with $\sigma$.
Now $\jet^n(F,G)^\sharp_c$ is the fixed set of this $\sigma$-module, which therefore is naturally a $\fix(K,\sigma)$-vector subspace.
Moreover, by~\cite[Lemma~4.2(ii)]{pillayziegler03}, the fact that $(K,\sigma)$ is existentially closed implies that
$$\jet^n(F)_c=\jet^n(F,G)^\sharp_c\otimes_{\fix(K,\sigma)}K$$
In particular, the dimension of $\jet^n(F,G)^\sharp_c$ as a $\fix(K,\sigma)$-vector space is equal to $\dim_K\jet^n(F)_c$.

\medskip

We will use the difference-analytic jet spaces defined above to prove a canonical base property for $\ccma$.
Let us first recall briefly what canonical bases in simple theories are.
In a simple theory an {\em amalgamation base} is a type $p\in S(A)$ for which type-amalgamation holds; that is if $B_1$ and $B_2$ are extensions of $A$ that are independent over $A$, and $q_i\in S(B_i)$ is a nonforking extension of $p$ for $i=1,2$, then $q_1\cup q_2$ is consistent and does not fork over $A$.
{\em Parallelism} among amalgamation bases is the transitive closure of the relation of having a common nonforking extension.
Given an amalgamation base $p$, the {\em canonical base of $p$}, denoted by $\cb(p)$, can be characterised by the following property: an automorphism of the universe fixes $\cb(p)$ if and only if it fixes the parallelism class of $p$ setwise.
For more details, and further properties of canonical bases that we will use, we suggest~\cite[Section~4.3]{kim}.

The {\em canonical base property} (CBP) refers to a condition that a theory may or may not satisfy, which tells us something about the type of $\cb(p)$ over a realisation of~$p$.
We will not make the CBP precise in general here, but we do articulate it in our context as Theorem~\ref{cbp} below.

Because $\ccma$ is supersimple, it follows by~\cite{bpw} that types over $\aclsigma^{\eq}$-closed sets are amalgamation bases, and that the canonical base of an amalgamation base exists as a (possibly infinite) tuple of imaginary elements.
(In general simple theories canonical bases exist as ``hyperimaginary" elements.)
We will therefore have to pass sometimes to $(\mathcal A',\sigma)^{\eq}$.

The following theorem should be viewed as a generalisation of~\cite[Theorem~1.2]{pillayziegler03} from $\acfa$ to $\ccma$.
Indeed, our proof is modelled on the $\acfa$ case.

\begin{theorem}[CBP for finite-dimensional types in $\ccma$]
\label{cbp}
Suppose $(F,G)$ is a $\ccm$-$\sigma$-variety defined over $B=\aclsigma(B)$, and $c\in(F,G)^\sharp$ is generic in $F$ over $B$.
Let $B_1\supseteq B$ be an $\aclsigma^{\eq}$-closed set of parameters.
If $e=\cbsigma(c/B_1)$ then $\tpsigma(e/Bc)$ is almost $\fix(K,\sigma)$-internal.
\end{theorem}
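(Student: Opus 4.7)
The plan is to imitate the Pillay--Ziegler strategy for $\acfa$, now using the difference-analytic jet spaces just introduced. First I reduce to a coding problem. Let $F_1 := \loc(c/B_1)$ and $G_1 := \loc\bigl((c,\sigma c)/B_1\bigr)$, so that $(F_1, G_1)$ is a $\ccm$-$\sigma$-subvariety of $(F, G)$ over a subset of $B_1$, with $c \in (F_1, G_1)^\sharp$ generic in $F_1$. The generic finite-to-finiteness of the two projections of $G_1$ is inherited from that of $G$. By quantifier reduction in $\ccma$ and the remark following Lemma~\ref{findim}, $\tpsigma(c/B_1)$ is determined by $(F_1, G_1)$ together with the genericity of $c$ in $F_1$. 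Hence, in $(\A', \sigma)^{\eq}$, $e$ is interalgebraic with the canonical parameter $\ulcorner (F_1, G_1) \urcorner$, and it suffices to prove that this parameter is almost $\fix(K, \sigma)$-internal over $Bc$.

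Next I linearise via jets and descend to the fixed field. Viewing $F_1$ as a fibre in a $B$-definable family of closed subvarieties of $F$, and analogously $G_1$ in $G$, Proposition~\ref{lineariseccm} furnishes $N$ such that $F_1$ is determined, among irreducible closed subvarieties of $F$ through $c$, by the $K$-subspace $V := \jet^N(F_1)_c$ of the $Bc$-definable $K$-vector space $\jet^N(F)_c$, and $G_1$ is similarly determined by $W := \jet^N(G_1)_{(c, \sigma c)} \subseteq \jet^N(G)_{(c, \sigma c)}$. Now comes the key point from the discussion preceding the theorem: the genericity of $c \in (F_1, G_1)^\sharp$ in $F_1$ endows $V$ with a $(K, \sigma)$-$\sigma$-module structure, and by the existential closedness of $(K, \sigma)$ together with \cite[Lemma~4.2(ii)]{pillayziegler03},
\[ V = V^\sharp \otimes_{\fix(K, \sigma)} K, \]
where $V^\sharp := \jet^N(F_1, G_1)^\sharp_c$ is the $\fix(K, \sigma)$-subspace of fixed vectors of $\jet^N(F, G)^\sharp_c$; an analogous identity holds for $W$ and the corresponding $W^\sharp$. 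Thus $V$ and $W$ are recovered, over $Bc$, from $V^\sharp$ and $W^\sharp$, which are subspaces of $Bc$-definable finite-dimensional $\fix(K, \sigma)$-vector spaces.

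Finally, a point of a Grassmannian of subspaces of a finite-dimensional $\fix(K, \sigma)$-vector space is coded by a tuple from $\fix(K, \sigma)$, so $V^\sharp$ and $W^\sharp$ are coded by tuples $\lambda$ and $\mu$ from $\fix(K, \sigma)$. Chaining the reductions, $\ulcorner (F_1, G_1) \urcorner$ lies in $\dclsigma(Bc\lambda\mu)$, yielding the desired almost internality. The principal obstacle is ensuring uniformity in Proposition~\ref{lineariseccm}: one must arrange $F_1$ and $G_1$ as fibres in a single $B$-definable family, so as to get one $N$ working uniformly as $B_1 \supseteq B$ varies. A subsidiary point is to verify that the fibre-space and generic-finite-to-finite projection hypotheses needed to put a $\sigma$-module structure on $\jet^N(F_1)_c$ do indeed hold for $(F_1, G_1)$; but this follows from the corresponding hypotheses on $(F, G)$ together with the fact that $c$ is generic in $F_1$ of complex dimension $\dim F_1$.
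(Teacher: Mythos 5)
Your strategy is the same as the paper's: replace $e$ by the canonical parameter of the locus of $c$ over $B_1$, linearise via Proposition~\ref{lineariseccm}, and descend to the fixed field through the $\sharp$-jet space and the identity $\jet^N(F_1)_c=\jet^N(F_1,G_1)^\sharp_c\otimes_{\fix(K,\sigma)}K$. The gap is in the final step. You assert that $V^\sharp=\jet^N(F_1,G_1)^\sharp_c$, being a subspace of the finite-dimensional $\fix(K,\sigma)$-vector space $\jet^N(F,G)^\sharp_c$, ``is coded by a tuple $\lambda$ from $\fix(K,\sigma)$,'' and conclude that $\ulcorner (F_1,G_1)\urcorner\in\dclsigma(Bc\lambda\mu)$. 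But $\jet^N(F,G)^\sharp_c$ is a $Bc$-definable vector space \emph{over} the fixed field, not a cartesian power of it: its elements live in the nonstandard jet-space sort, and there is no $Bc$-definable isomorphism with $\fix(K,\sigma)^m$. To extract Grassmannian (Pl\"ucker) coordinates in $\fix(K,\sigma)$ for $V^\sharp$ you must first fix a basis $A$ of $\jet^N(F,G)^\sharp_c$, and $A$ is an auxiliary parameter which is neither a tuple from $\fix(K,\sigma)$ nor, in general, in $\dclsigma(Bc)$. As written, your conclusion would place $e$ in $\aclsigma^{\eq}(Bc\cup\fix(K,\sigma))$ with no auxiliary parameters at all, which is strictly stronger than (almost) internality and is not what the argument delivers.

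The repair is exactly what the definition of internality is designed for: choose the basis $A$ with $\displaystyle A\ind^{\ccma}_{Bc}e$ (possible since $\jet^N(F,G)^\sharp_c$ is $Bc$-definable), and then check that any automorphism fixing $B$, $c$, $A$ and $\fix(K,\sigma)$ pointwise fixes $\jet^N(F,G)^\sharp_c$ pointwise, hence fixes $V^\sharp$ setwise, hence by the tensor identity fixes $\jet^N(F_1)_c$ setwise, and so by Proposition~\ref{lineariseccm} fixes $F_1$; this puts the canonical parameter of $F_1$ in $\dclsigma(B,c,A,\fix(K,\sigma))$, which is the paper's conclusion. Two smaller remarks: the uniformity you worry about in Proposition~\ref{lineariseccm} is a non-issue, since $B_1$ is fixed and one only needs the single $n$ attached to one family $X\subseteq S\times Z$ with $F_1=X_a$; and it suffices to recover $F_1$ alone --- its canonical parameter is already interalgebraic with $e$ over $B$ (one direction because automorphisms fixing $e$ fix the parallelism class and hence, by Lemma~\ref{dimind}, the locus; the other because $\displaystyle c\ind^{\ccma}_{\ulcorner F_1\urcorner}B_1$) --- so the parallel treatment of $G_1$ via $W$ is harmless but unnecessary.
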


\begin{proof}
Let $B_1'\supseteq B$ be the intersection of $B_1$ with the real sorts of $(\A',\sigma)$.
By geometric elimination of imaginaries (see~(4) on page~\pageref{gei}), $B_1\subseteq\aclsigma^{\eq}(B_1')$.

Let $F_1=\loc(c/B_1')$ and let $d$ be a canonical parameter for $F_1$ in $\ccm$.
We first claim that $d$ and $e$ are interalgebraic over $B$ in $\ccma$.
Let $\alpha$ be an automorphism of $(\A',\sigma)$ that is the identity on $Be$.
Then $\alpha(c)\in (F,G)^\sharp$ is generic in $F$ over $B$ also, and by properties of canonical bases $\tpsigma\big(\alpha(c)/\alpha(B_1)\big)$ is parallel to $\tpsigma(c/B_1)$.
So $\tpsigma\big(\alpha(c)/\alpha(B_1')\big)$ is parallel to $\tpsigma(c/B_1')$ also.
It follows by Lemma~\ref{dimind} that $\loc\big(\alpha(c)/\alpha(B_1')\big)=\loc(c/B_1')$.
That is, $\alpha(F_1)=F_1$, and so $\alpha(d)=d$.
This shows that $d\in\dclsigma(Be)$.
On the other hand, Lemma~\ref{dimind} also implies that $\displaystyle c\ind^{\ccma}_dB_1'$, and so $e=\cbsigma(c/B_1)\in\aclsigma^{\eq}(d)$.
%This is by~\cite[Proposition~2.8]{hartkimpillay}, using the fact that $e$ is interdefinable with a set if imaginary elements so bdd is replaced by acl. Also see cb.pdf.

So it suffices to show that $\tpsigma(d/Bc)$ is almost $\fix(K,\sigma)$-internal.
In fact, we will show it is $\fix(K,\sigma)$-internal.

Let $n>0$ be as given by Proposition~\ref{lineariseccm} applied to compact complex varieties $X\subset S\times Z$ where $F_1=X_a$ for $a\in S(\A')$.
Let $A$ be a $\fix(K,\sigma)$-basis for $\jet^n(F,G)^\sharp_c$, chosen so that $\displaystyle A\ind^{\ccma}_{Bc}d$.
We show that $d\in\dclsigma\big(B,c,A,\fix(K,\sigma)\big)$.

Setting $G_1=\loc\big((c,\sigma(c))/B_1\big)$, we have that $(F_1,G_1)$ is a $\ccmsigma$-variety, and so $\jet^n(G_1)_{(c,\sigma(c))}$ induces on $\jet^n(F_1)_c$ the structure of a $\sigma$-module over $(K,\sigma)$.
Now $\jet^n(G_1)_{(c,\sigma(c))}$ is a $K$-subspace of $\jet^n(G)_{(c,\sigma(c))}$, so that $\jet^n(F_1)_c$ is a $\sigma$-submodule of $\jet^n(F)_c$.
Hence $\jet^n(F_1,G_1)^\sharp_c$ is a $\fix(K,\sigma)$-subspace of $\jet^n(F,G)^\sharp_c$.

Let $\alpha$ be an automorphism of $(\mathcal A',\sigma)$ that fixes the sets $B,c,A,\fix(K,\sigma)$ pointwise.
Then $\alpha$ fixes all of $\jet^n(F,G)^\sharp_c$ pointwise, and hence all of $\jet^n(F_1,G_1)^\sharp_c$.
But, $\jet^n(F_1)_c=\jet^n(F_1,G_1)^\sharp_c\otimes_{\fix(K,\sigma)}K$, so that  $\alpha$ preserves $\jet^n(F_1)_c$ setwise.
That is, $\jet^n(F_1)_c=\jet^n(F_1^\alpha)_c$.
So by Proposition~\ref{lineariseccm}, $F_1=F_1^\alpha$.
As $d$ is a canonical parameter for $F_1$ in $\ccm$, we have that $\alpha(d)=d$, as desired.
\end{proof}

As was first observed by Pillay~\cite{pillay02}, from such a canonical base property one can deduce a Zilber dichotomy statement.
Ours will apply to finite-dimensional minimal types.
Recall that a type in a simple theory is {\em minimal} if it is of $SU$-rank one, that is, every forking extension is algebraic.

\begin{corollary}[Zilber dichotomy for finite-dimensional minimal types in $\ccma$]
\label{zd}
Suppose $B$ is an $\aclsigma$-closed set and $p(x)\in S(B)$ is a finite-dimensional minimal type.
Then either $p$ is one-based or it is almost internal to $\fix(K,\sigma)$.
\end{corollary}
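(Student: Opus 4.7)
The plan is to derive Corollary~\ref{zd} from Theorem~\ref{cbp} via the by-now-standard Pillay-style argument originating in~\cite{pillay02}.

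First I would reduce to the setting of Theorem~\ref{cbp}. Given $a\models p$, finite-dimensionality and Lemma~\ref{findim} applied over the $\aclsigma$-closed set $B$ supply $N\geq 0$ and a $\ccmsigma$-variety $(F,G)$ over $B$ such that $\bar a:=(a,\sigma a,\dots,\sigma^N a)$ is $\ccm$-generic in $F$ over $B$ and lies in $(F,G)^\sharp$. Since $\bar a$ and $a$ are interdefinable over $B$, it suffices to prove the dichotomy for $\tpsigma(\bar a/B)$, which is still minimal.

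Next I would assume $\tpsigma(\bar a/B)$ is not one-based and deduce it is almost $\fix(K,\sigma)$-internal. By definition of non-one-basedness, there is an $\aclsigma^{\eq}$-closed set $B_1\supseteq B$ with $\bar a\nind^{\ccma}_B B_1$ such that, setting $e:=\cbsigma(\tpsigma(\bar a/B_1))$, one has $e\notin\aclsigma^{\eq}(B\bar a)$. Combining $\bar a\ind^{\ccma}_{Be}B_1$ (from the defining property of the canonical base) with $\bar a\nind^{\ccma}_B B_1$ forces $\bar a\nind^{\ccma}_B e$; SU-rank one then promotes this forking to $\bar a\in\aclsigma^{\eq}(Be)$. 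Applying Theorem~\ref{cbp} to $(F,G)$, $\bar a$ and $B_1$ now yields that $\tpsigma(e/B\bar a)$ is almost $\fix(K,\sigma)$-internal, and it is non-algebraic since $e\notin\aclsigma^{\eq}(B\bar a)$; hence it is non-orthogonal to the generic type of $\fix(K,\sigma)$.

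To close, I would transfer this non-orthogonality to $p$ itself. Because $\bar a\in\aclsigma^{\eq}(Be)$ and $e\notin\aclsigma^{\eq}(B\bar a)$, the types $\tpsigma(\bar a/B)$ and $\tpsigma(e/B)$ are non-orthogonal; combined with the non-orthogonality of $\tpsigma(e/B\bar a)$ with the generic type of $\fix(K,\sigma)$, transitivity of non-orthogonality at the regular (SU-rank one) type $\tpsigma(\bar a/B)$ yields non-orthogonality of $\tpsigma(\bar a/B)$ with the generic type of $\fix(K,\sigma)$. Minimality automatically promotes this to the required almost $\fix(K,\sigma)$-internality of $p$: a forking witness against a tuple from $\fix(K,\sigma)$ over a suitable independent extension $D$ of $B$ places a realization of $p$ in $\aclsigma^{\eq}(D\cup\fix(K,\sigma))$. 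The main obstacle---more bookkeeping than conceptual---will be tracking the canonical base $e$ as an imaginary in $(\A',\sigma)^{\eq}$, where $\ccma$ has only geometric elimination of imaginaries (Corollary~\ref{notEI}); one must verify that the forking-calculus facts invoked above (transitivity of independence, regularity of SU-rank-one types, and the promotion of non-orthogonality-to-$\fix(K,\sigma)$ to almost internality for minimal types) go through in this supersimple but non-stable setting, which they do by the general theory of supersimple theories together with the properties of $\ccma$ recalled in Section~\ref{ta-section}.
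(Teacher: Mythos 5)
There is a genuine gap at the very first use of the non-one-basedness hypothesis. One-basedness of $p$ is a condition on \emph{finite tuples} of realisations of $p$: $p$ is one-based iff for every finite tuple $c$ of realisations of $p$ and every $\aclsigma^{\eq}$-closed $B_1\supseteq B$ one has $\cbsigma(c/B_1)\subseteq\aclsigma^{\eq}(Bc)$. You instead extract a witness involving a single realisation: your $\bar a=(a,\sigma a,\dots,\sigma^N a)$ is interdefinable with $a$ (each $\sigma^i a\in\dclsigma(a)$), and you posit an $\aclsigma^{\eq}$-closed $B_1\supseteq B$ with $\bar a\nind_B B_1$ and $e:=\cbsigma(\bar a/B_1)\notin\aclsigma^{\eq}(B\bar a)$. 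For a minimal type such a configuration never exists: if $\bar a\nind_B B_1$ then, since $\SU(\bar a/B)=1$ and $B_1$ is $\aclsigma^{\eq}$-closed, $\bar a\in B_1$; but then every automorphism fixing $\bar a$ fixes the parallelism class of $\tpsigma(\bar a/B_1)$ (whose realisation set is $\{\bar a\}$), so $e\in\dclsigma^{\eq}(\bar a)\subseteq\aclsigma^{\eq}(B\bar a)$. (And if instead $\bar a\ind_B B_1$, then $e\in\aclsigma^{\eq}(B)$.) So the single-realisation canonical-base condition holds trivially for \emph{every} minimal type; your assumed witness is vacuous and the proof never actually engages the failure of one-basedness. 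This is exactly why the paper's proof starts from a finite tuple $c$ of realisations of $p$ and a model $M\supseteq B$ with $e=\cbsigma(c/M)\notin\aclsigma(Bc)$, observes that $\tpsigma(c/B)$ is still finite-dimensional, and applies Lemma~\ref{findim} and Theorem~\ref{cbp} to that whole tuple.

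A secondary, now moot, issue: even granting almost $\fix(K,\sigma)$-internality of $\tpsigma(e/B\bar a)$, your passage back to $p$ via ``transitivity of non-orthogonality at the regular type'' is only sketched; the internality is over the enlarged base, $e$ need not have $\SU$-rank one, and this orthogonality calculus in a merely supersimple theory with only geometric elimination of imaginaries needs an argument. The paper instead proceeds directly: $e$ lies in the definable closure of finitely many realisations of $p$ (a finite part of a Morley sequence in $\tpsigma(c/M)$), one writes $e\in\aclsigma(A,B,c,d)$ with $d$ a tuple from $\fix(K,\sigma)$ and $A\ind_{Bc}e$, deduces $(a_1,\dots,a_m)\nind_{ABc}d$ for suitable $ABc$-independent realisations $a_i$ of $p$, and then $\SU(p)=1$ places some $a_{i+1}$ in $\aclsigma(a_1\dots a_iABcd)$, which is precisely almost internality of $p$ to $\fix(K,\sigma)$. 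To repair your proposal you would need both to replace the single realisation by a tuple-of-realisations witness and to carry out some such final bookkeeping step.
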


\begin{proof}
We follow the argumentation of~\cite[Corollary~6.19]{paperC}.

Suppose $p(x)$ is not one-based.
Then there exists a finite tuple $c$ of realisations of $p(x)$ and a model $M\supseteq B$ such that $e:=\cbsigma(c/M)\notin\aclsigma(Bc)$.
%See the file 1b.pdf for an argument for this.
Note that $\tpsigma(c/B)$ is also finite-dimensional so that, possibly after replacing $c$ by $(c,\sigma c,\dots,\sigma^nc)$ for some $n$, we may assume by Lemma~\ref{findim} that there is a $\ccmsigma$-variety $(F,G)$ over $B$ and that $c$ is generic in $F$ over $B$ and contained in $(F,G)^\sharp$.
Hence, by Theorem~\ref{cbp} (with $B_1=M$), we have that $\tpsigma(e/Bc)$ is almost $\fix(K,\sigma)$-internal.
So we have $e\in\aclsigma(A,B,c,d)$ for some tuple $d$ from $\fix(K,\sigma)$ and some $\displaystyle A\ind^{\ccma}_{Bc}e$.
It follows that $\displaystyle e\mathbb \nind^{\ccma}_{ABc}d$.
Now $e$ is in the definable closure of a finite set of realisations of $p(x)$; this is because $e$ is in the definable closure of a finite part of a Morley sequence in $\tpsigma(c/M)$%by~[???]\cite{wagner}
, and $c$ is a tuple of realisations of $p(x)$.
Since $SU(p)=1$, we can find $ABc$-independent realisations $a_1,\dots,a_m$ of $p(x)$ such that $e\in\aclsigma(ABc,a_1,\dots,a_m)$.
Hence $\displaystyle (a_1,\dots,a_m)\mathbb \nind^{\ccma}_{ABc}d$.
But then for some $i<m$, $a_{i+1}\in\aclsigma(a_1\dots a_iABcd)$, witnessing that $p(x)$ is almost internal to $\fix(K,\sigma)$.
\end{proof}

\begin{remark}
On the face of it the above corollary is only about real types, as per our conventions we do not work in $(\A',\sigma)^{\eq}$.
But because $\ccma$ admits geometric elimination of imaginaries, and because both one-basedness and almost internality to $\fix(K,\sigma)$ are preserved by interalgebraicity, we get the Zilber dichotomy for all finite-dimensional minimal types in $(\A',\sigma)^{\eq}$ also.
\end{remark}

The Zilber dichotomy result of Corollary~\ref{zd} applies only to finite-dimensional minimal types.
We give an example to show that not all minimal types are finite-dimensional.
Whether a Zilber dichotomy holds for these types we leave as an open question;
we only give an example of a trivial infinite-dimensional minimal type.

\begin{example}
  \label{infDimTrivMin}
  Let $X$ be a simply connected strongly minimal compact complex variety
    of dimension greater than 1
    and with trivial automorphism group;
    for example, almost all generic K3 surfaces have these properties
    \cite[Theorem~3.6]{MacriStellariAMsK3}.

  By Theorem~\ref{purity}, $X(\mathcal A')$ is stably embedded in $(\A',\sigma)$ with induced structure $L^X_{\sigma}$.
  By \cite[Proposition~2.3]{MoosaPillayAleph0CatCCM}, the complex analytic structure on $X$, namely $\big(X(\mathcal A'),L^X\big)$, is just the structure of equality.
  So the full induced structure on $X(\mathcal A')$
    is that of an infinite set with a generic permutation $\sigma$,
    and with distinguished $\sigma$-fixed points for the complex points $X(\A)$.
    It follows that the type in $\ccma$ of an aperiodic point of $X$,
    \[ p := \{x \in X\}\cup\{ x \neq \sigma^n x \;|\; n\in\mathbb N \} ,\]
    is complete, minimal, and has trivial geometry.
  If $b \vDash p$, then $\loc(b,\ldots,\sigma^{n-1}b) = X^n$,
    so $p$ has infinite $\dimsigma$.

  We may also note that the types of periodic points of $X$ provide examples of trivial minimal types of finite dimension which are orthogonal to $\acfa$. Indeed, the discussion above shows that $X$ is 
  {\em fully orthogonal} to $\acfa$ in the sense that every tuple from $X$ is independent of every tuple from $K$ over any parameters.
\end{example}

\bigskip
\section{Minimal one-based types}

\noindent
The Zilber dichotomy theorem, Corollary~\ref{zd}, tells us that there are no new non-one-based finite-dimensional minimal types in $\ccma$; they all come from $\acfa$ and in fact from the fixed field in $\acfa$.
In Example~\ref{infDimTrivMin}, we saw that there are new trivial minimal types,
both of finite and of infinite dimension.
It remains to consider the minimal one-based nontrivial types.
The following example shows that new ones do occur in $\ccma$, of finite dimension.

Fix $(\A',\sigma)\models\ccma$ sufficiently saturated.

\begin{example}
\label{mnt1bexample}
There are definable groups in $(\A',\sigma)$ that are finite-dimensional, minimal, one-based, and fully orthogonal to $\acfa$.
\end{example}

\begin{proof}
If $X$ is any nonalgebraic simple complex torus, then $\fix\big(X(\A'),\sigma\big)$ will be such an example.
In fact, we will describe precisely which definable subgroups of $X(\A')$ will have the desired properties.

First of all, $X$ is a one-based stable group in $\ccma$.
Indeed, it is a one-based stable group in $\ccm$, and so $\big(X(\mathcal A'),L^X_{\sigma}\big)$ is a one-based stable group by~\cite{gr}, and by Theorem~\ref{purity} the latter is the full structure induced on $X(\A')$ by $(\mathcal A',\sigma)$.
Secondly, as $X$ is fully orthogonal to $\mathbb P$ in $\ccm$ it follows from the characterisation of independence in $\ccma$ -- see~(3) on page~\pageref{ss} -- that $X$ and $\mathbb P$ remain fully orthogonal in $\ccma$.
Hence, one-basedness and full orthogonality to $\acfa$ will come for free; what we require is simply a description of all the finite-dimensional definable subgroups of $X(\A')$ of $SU$-rank one.
Hrushovski's arguments in~\cite[$\S$4.1]{maninmumford}, which are written for simple abelian varieties in $\acfa$ but which work equally well for simple complex tori in $\ccma$, show that the definable subgroups of $X(\A')$ of $SU$-rank one are precisely those of the form $\ker(g)$ where $g$ is an element of $\endo(X)[\sigma]$ and is left-irreducible in $\endo_{\mathbb Q}(X)[\sigma,\sigma^{-1}]$.
Here, $\endo(X)$ denotes the (noncommutative unitary) ring of holomorphic homomorphisms from $X$ to itself, and $\endo_{\QQ}(X):=\endo(X)\otimes_{\ZZ}\QQ$.
Note that $\ker(g)$ will be finite-dimensional.
Taking $g=\sigma-1$ yields the example of $\fix\big(X(\A'),\sigma\big)$.
\end{proof}

The following proposition will essentially say that these are the only nontrivial one-based minimal types of finite dimension that do not come from $\acfa$.
However, in order to express this, we need to talk about groups definable over parameters in $(\A',\sigma)$, and in particular those that correspond to complex tori.
The usual proof of the Weil - van den Dries - Hrushovski theorem goes through for $\ccm$, so that every group interpretable in $\A'$ can be endowed with the structure of an {\em $\A'$-meromorphic group} in the sense of~\cite[Definition~4.3]{ams}; it is the natural nonstandard analogue of meromorphic group.
We will say (somewhat oddly) that an $\A'$-meromorphic group is {\em $\A'$-compact} if as an $\A'$-manifold it is the image of a closed set in $\A'$ under an $\A'$-holomorphic map (again the notions of $\A'$-manifold and $\A'$-holomorphic are the natural nonstandard ones, see~\cite[Definition~4.1]{ams}).
An $\A'$-compact group appears as the generic fibre of a definable family of meromorphic groups that are individually, though not necessarily uniformly, definably isomorphic to complex tori.
In particular,  $\A'$-compact groups are commutative.
One would like to think of them as ``nonstandard complex tori", except that  outside of the essentially saturated context there are some subtleties (and open questions) about doing so, see the discussion in the Introduction to~\cite{ams}.

One other point of terminology before we state the proposition.
Until now in this paper we have used the term ``generic" (for points or types) only in the Zariski topology sense of the theory $\ccm$.
But now we want to talk about {\em generic types} of groups definable in $\ccma$, and we will mean this in the sense of groups definable in simple theories; see for example~\cite[Chapter~7]{kim}.

\begin{remark}
Let $G$ be a definable group in $(\A',\sigma)$ that is finite-dimensional (see Definition~\ref{def-fd}). Then a type is generic if and only if it is of maximal $\dimsigma$.
\end{remark}

\begin{proof}
In this proof, for a finite tuple $a$ from $G$ and a parameter set $C$ we set $d(a/C)=\dimsigma(a/\acl_\sigma(C))$ (which we think of as an element of $\NN$).
The function $d$ satisfies the following properties:

\begin{itemize}
%\item[(1)] $d(a,b/C)=d(b/C)+d(a/C,b)$ (additivity in towers). 
\item[(1)] $d$ witnesses independence: if $C\subseteq B$ and $a$ are given, then $d(a/B)\leq d(a/C)$, where equality holds if and only if $\displaystyle a\ind^{\ccma}_C B$.
\item[(2)] $d(h/C,g)=d(g\cdot h/C,g)=d(h\cdot g/C,g)$ for all $g,h\in G$ (translation invariance).
\end{itemize}

Note that 
%(1) is a consequence of additivity of complex analytic dimension, 
(1) is \eqref{indcma} on page \pageref{indcma}, and (2) follows from the fact that $\dimsigma$ is invariant 
under definable bijections.

The statement now follows formally from properties (1)-(2). Suppose for convenience that $G$ is 0-definable. Let $a\in G$ be such that $d(a)$ is maximal, and let $g\in G$ be generic 
in $G$ over $a$. As $\displaystyle a\ind^{\ccma} g$, we get $d(a)=d(a/g)=d(g\cdot a/g)$, by (1) and (2). Thus $\displaystyle g\cdot a \ind^{\ccma} g$ by maximality of $d(a)$ and (2). 
But $g\cdot a$ is generic (by genericity of $g$ over $a$), and so $a=g^{-1}\cdot g\cdot a$ is generic as well. Moreover, as $g\cdot a$ is independent
from $a$, we conclude that $d(g)=d(g/a)=d(g\cdot a/a)=d(g\cdot a)$, so $g$ is of maximal dimension.
\end{proof}

\begin{proposition}
\label{mnt1b}
Suppose $p(x)\in S(B)$ is a finite-dimensional minimal nontrivial one-based type in $(\A',\sigma)$ that is orthogonal to the projective line.
There exist a commutative simple nonalgebraic $\A'$-compact $\A'$-meromorphic group $G$ and a finite dimensional quantifier-free definable subgroup~$H$ of $SU$-rank one such that $p(x)$ is nonorthogonal to (all) the generic types of $H$.
\end{proposition}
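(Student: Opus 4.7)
The plan is to mimic the strategy of Hrushovski \cite{maninmumford} and Chatzidakis--Hrushovski \cite{acfa1} for $\acfa$, with complex tori playing the role of abelian varieties: from $p$ I would first produce a one-based commutative group of $SU$-rank one with generic types nonorthogonal to $p$, and then realise this group as a subgroup of a simple nonalgebraic $\A'$-compact $\A'$-meromorphic group.

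First, I would apply the group configuration theorem (in its incarnation for nontrivial one-based types in simple theories, as used in \cite{acfa1}) to a suitable configuration drawn from realisations of $p$ to obtain a type-definable commutative group $H_0$ of $SU$-rank one whose generic types are nonorthogonal to $p$. Finite-dimensionality and orthogonality to the projective line are inherited by $H_0$ from $p$. Using Lemma~\ref{findim} applied to a generic of $H_0$, together with the one-basedness of $H_0$ (which keeps everything at the level of $\ccmsigma$-varieties), I would upgrade $H_0$ to a quantifier-free definable group whose underlying set has the form $(F,W)^\sharp$ for a suitable $\ccmsigma$-variety $(F,W)$, with the group operation itself encoded by a $\ccmsigma$-correspondence.

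Second, I would produce an ambient $\A'$-compact $\A'$-meromorphic group $G$. The $\ccm$-version of the Weil--van den Dries--Hrushovski group-chunk theorem embeds $H_0$ at the quantifier-free level into a commutative $\A'$-meromorphic group $\widetilde G$, in which $H_0$ is Zariski dense. Since $p$ is orthogonal to the projective line, so is the generic of $H_0$; Theorem~\ref{purityalg} together with the classification of commutative $\A'$-meromorphic groups (the nonstandard analogue of the fact that a commutative meromorphic group in $\ccm$ is, up to isogeny, an extension of a complex torus by a linear algebraic group) then forces $\widetilde G$ to have no algebraic quotients, so that $\widetilde G$ is $\A'$-compact. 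Decomposing $\widetilde G$ up to $\A'$-isogeny yields simple $\A'$-compact factors; any simple factor $G$ into which $H_0$ has nontrivial image works, with $H$ the image of $H_0$ in $G$. Then $H$ has $SU$-rank one by minimality of $H_0$, and $G$ is necessarily nonalgebraic, since a nontrivial projection of $H_0$ into an algebraic abelian $\A'$-compact factor would, via Theorem~\ref{purityalg}, contradict orthogonality of $H_0$ to the projective line. Nonorthogonality of $p$ to the generic types of $H$ is inherited from that of $p$ to the generic types of $H_0$, which are translates of one another under $H_0$.

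The main obstacle I foresee is the second step: verifying that the arguments of \cite[\S4.1]{maninmumford} for $\acfa$ really do go through for commutative $\ccmsigma$-groups, so that the ambient $\widetilde G$ can be identified as an $\A'$-meromorphic group and then, using orthogonality to the projective line, shown to have no $\mathbb{G}_a$-, $\mathbb{G}_m$-, algebraic-torus-, or abelian-variety-quotient. The tools of Section~\ref{ta-section}, in particular Proposition~\ref{intern-prop} and Theorem~\ref{purityalg}, should supply what is needed to propagate orthogonality from $H_0$ to $\widetilde G$; but the bookkeeping, especially around the interaction of $\A'$-isogeny with the $\sigma$-structure, is where I expect the real work to lie.
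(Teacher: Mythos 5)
Your overall strategy (group configuration $\to$ commutative rank-one group $\to$ ambient meromorphic group $\to$ simplicity and compactness via orthogonality to $\mathbb P$) matches the paper's in outline, but the two places where you have deferred the work are precisely where the content lies, and your first step as stated would not go through. You propose to run the group configuration theorem ``for nontrivial one-based types in simple theories'' directly in $\ccma$. The simple-theory group configuration theorems produce a group only up to commensurability in a hyperdefinable setting, and there is no route from such an object to a quantifier-free definable subgroup of an $\A'$-meromorphic group. The paper's key move is different: finite-dimensionality lets one arrange $\sigma(a)\in\acl(Ba)$ for $a\models p$ (and likewise for the auxiliary points $c,w,y$), so that $\aclsigma$ collapses to $\acl$ on the configuration and the seven points form an \emph{abelian} group configuration in the \emph{stable} reduct $\ccm$. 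The classical abelian group configuration theorem (Theorem~\ref{T:AbGpConf}) then directly yields a connected abelian $\A'$-meromorphic group $G$ with $\ccm$-generics $a',b',c'=a'+b'$ interalgebraic over a model with $a,b,c$. This single step replaces your first three steps (simple-theory configuration, ``upgrade'' to a $\ccmsigma$-group, and Weil--van den Dries--Hrushovski embedding), each of which is a nontrivial obstacle in your formulation.

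The second gap is the construction of $H$. The paper takes $H=\Stab(\mathbf q_0)$ where $q_0=\qftp_\sigma(a'/M)$, and the proof that this stabiliser is quantifier-free \emph{definable} (rather than merely type-definable) is a genuine argument: one writes $H$ as an intersection of $\phi$-stabilisers, each quantifier-free definable, and then rules out infinite strictly descending chains of finite-index quantifier-free definable subgroups by counting generic quantifier-free types via finite-dimensionality. Your proposal does not address this. Finally, your plan to decompose $\widetilde G$ into simple factors up to $\A'$-isogeny presupposes a Poincar\'e-type complete reducibility for $\A'$-meromorphic (nonstandard) groups, which is not available in the references and is not needed: the paper proves simplicity of $G$ directly, by observing that a proper infinite $\A'$-meromorphic subgroup $N$ would either contain the quantifier-free connected rank-one group $H$ (forcing $\dim N\geq\dim G$) or meet it finitely (forcing $\dim(G/N)\geq\dim G$). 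The last step --- nonalgebraicity from orthogonality to $\mathbb P$, and $\A'$-compactness from the Chevalley-type structure theorem for $\A'$-meromorphic groups --- is where your proposal and the paper agree.
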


\begin{remark}
In the case when $B=\emptyset$ and the sort of $p(x)$ is of
K\"ahler-type, the group $G$ in the conclusion can be taken to be (the
interpretation in $\mathcal A'$ of a) simple nonalgebraic complex torus.
\end{remark}

\begin{proof}[Proof of~\ref{mnt1b}]
This is just the analogue for $\ccma$ of the corresponding fact about minimal nontrivial one-based types in $\acfa$ which is due to Chatzidakis and Hrushovski (see \cite[Theorem~5.12]{acfa1}).
Indeed, their proof goes through in this setting. We will give the proof in detail.

By finite dimensionality, replacing $p(x)$ with something interdefinable with it, we can assume that if $a\models p(x)$ then $\sigma(a)\in\acl(Ba)$. By non-triviality, we find realisations $a,b,x,z$ of $p$ which are 
dependent and such that any triple is independent. By modularity and geometric elimination of imaginaries for $\ccma$, we find tuples $c,w$ and $y$ of $\SU$-rank 1 such that 
\begin{itemize}
\item $\aclsigma(B,a,b)\cap\aclsigma(B,x,z)=\aclsigma(B,c)$;
\item $\aclsigma(B,a,z)\cap\aclsigma(B,b,x)=\aclsigma(B,w)$, and 
\item $\aclsigma(B,a,x)\cap\aclsigma(B,b,z)=\aclsigma(B,y)$.
\end{itemize}

By finite dimensionality, enlarging $c$ if necessary, we may assume that $\sigma(c)\in\acl(B,c)$; similarly for $w$ and $y$. The tuple 
$(a,b,c,x,y,z,w)$ forms an abelian group configuration (see Theorem~\ref{T:AbGpConf}) 
with respect to the theory $\ccm$. 
%This would work, but makes statements a bit more complicated. It supports the remark on Kaehler type. Let $(M,\sigma)\models \ccm_\sigma$ be contained in $\A'$ such that $M$ is %a sufficiently saturated model of $\ccm$. 
Let $(M,\sigma)\elex\A'$ be sufficiently saturated. We may assume that $a,b,c,x,y,z,w\ind_B M$. The abelian group configuration theorem (Theorem \ref{T:AbGpConf}) yields a connected abelian $\A'$-meromorphic group $G$ defined over $M$ such that there are independent generics $a',b'$ in $G$ over $M$ (in the sense of $\ccm$) which satisfy $\acl(M,a)=\acl(M,a')$ and $\acl(M,b)=\acl(M,b')$ and $\acl(M,c)=\acl(M,c')$, where $c'=a'+b'$.

Let $q_0:=\qftp_\sigma(a'/M)$. This is an $M$-definable (partial) type, as quantifier free formulas are stable in $\ccma$. Let $H:=\Stab({\mathbf q_0})$, where ${\mathbf q_0}$ is the unique non-forking 
extension of $q_0$ to a quantifier-free complete type over $\A'$.

\begin{claim}\label{C:qfstab}
\begin{enumerate}
\item The subgroup $H$ is definable by a quantifier-free formula.
\item $\SU(H)=1=\SU(q_0)$, $H$ is quantifier-free connected, and any non-algebraic type in $H$ is non-orthogonal to any completion of $q_0$. 
\end{enumerate}
\end{claim}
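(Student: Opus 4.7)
The plan is to adapt the argument for $\acfa$ from \cite[Theorem~5.12]{acfa1}, exploiting the fact that in $\ccma$ the quantifier-free $L_\sigma$-formulas are precisely the $L$-formulas of $\ccm$ and are therefore stable. This lets us view $G$ as a definable group in the totally transcendental theory $\ccm$ and invoke the full machinery of stable group theory for the stabilizer of a global complete type, applied here to the quantifier-free complete type $\mathbf{q}_0$.

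For part (1), since $\mathbf{q}_0$ is a global quantifier-free complete type, its stabilizer is the intersection $\bigcap_{\varphi}\Stab_{\varphi}(\mathbf{q}_0)$, where $\varphi$ ranges over $L$-formulas and each $\Stab_{\varphi}(\mathbf{q}_0)$ is a quantifier-free definable subgroup of $G$. Because $G$ has finite Morley rank in $\ccm$, the descending chain condition on definable subgroups forces this intersection to coincide with the intersection of finitely many such $\varphi$-stabilizers, yielding quantifier-free definability of $H$.

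For part (2), the equality $\SU(q_0)=1$ is a direct consequence of the interalgebraicity $\acl(M,a)=\acl(M,a')$ and the fact that $a$ realises the minimal type $p$, so $\tpsigma(a'/M)$ has $\SU$-rank one, and hence so does $q_0$. For $\SU(H)=1$, pick independent realisations $a',b'$ of $\mathbf{q}_0$; by the standard computation in stable groups, $a'-b'$ is a generic of $H$, and it has $\SU$-rank one since it lies in $\aclsigma(M,a',b')$ with $a',b'$ each of $\SU$-rank one over $M$ and independent. Quantifier-free connectedness of $H$, i.e., the nonexistence of a proper quantifier-free definable subgroup of finite index, is the standard property of stabilizers of complete global types in the stable (quantifier-free) reduct. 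For the non-orthogonality statement: given any non-algebraic type $r$ over some $C\supseteq M$ concentrated on $H$, take $h\models r$ and an $a''\models\mathbf{q}_0\upharpoonright_C$ with $a''\ind_C h$; then $h+a''\models\mathbf{q}_0\upharpoonright_C$ (this is the defining property of $\Stab(\mathbf{q}_0)$), and $h+a''$ is nontrivially interalgebraic with both $h$ and $a''$ over $C$, witnessing non-orthogonality of $r$ to every completion of $q_0$.

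The main obstacle I anticipate is careful bookkeeping between the quantifier-free stable reduct, where the stabilizer apparatus lives, and the full $\ccma$-theory, where the $\SU$-rank and non-orthogonality are measured. In particular, one must verify that $\SU$-rank of realisations of $\mathbf{q}_0$ computed in $\ccma$ truly agrees with the rank inherited from $q_0$; this is where the reduction made at the outset of the proof of Proposition~\ref{mnt1b}, ensuring $\sigma(a)\in\acl(B,a)$ (and analogously for the other tuples), is essential, because it guarantees that the quantifier-free type captures enough of the $\sigma$-structure for ranks to coincide.
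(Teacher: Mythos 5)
Your overall strategy (adapting \cite[Theorem~5.12]{acfa1} via stability of quantifier-free formulas and $\phi$-stabilisers) is the one the paper follows, but both parts break down at exactly the points where the real work lies. In part (1), the appeal to the descending chain condition for definable subgroups of the finite Morley rank group $G$ in $\ccm$ is not valid: the $\phi$-stabilisers are \emph{quantifier-free $L_\sigma$-definable} subgroups, i.e.\ they are cut out by conditions on $(h,\sigma(h),\dots,\sigma^m(h))$ and correspond to $L$-definable subgroups of the prolongations $G\times\sigma(G)\times\cdots\times\sigma^m(G)$ with $m$ growing with $\phi$ --- they are not $L$-definable subgroups of $G$ itself, so no single finite Morley rank DCC applies. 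Supersimplicity of $\ccma$ rules out infinite descending chains in which each index is infinite, but a chain of quantifier-free definable subgroups with \emph{finite} index at each step is not excluded by any soft argument (compare Remark~\ref{notEIExplicit}, where finite-index definable subgroups are precisely the source of trouble). The paper closes this gap with a separate counting argument: after arranging $\sigma(h)\in\acl(M,h)$ on $H_0$, a finite-dimensional quantifier-free definable set carries only countably many generic quantifier-free types over $M$, whereas an infinite strictly descending finite-index chain would produce continuum many. This step is the heart of the proof and is missing from your proposal.

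In part (2), the assertion that for independent realisations $a',b'$ of $\mathbf q_0$ the difference $a'-b'$ is a generic of $H=\Stab(\mathbf q_0)$ ``by the standard computation in stable groups'' is false for an arbitrary type: the stabiliser of a global type can be trivial while differences of independent realisations are nonzero (e.g.\ the generic type of the parabola $y=x^2$ in $(K^+)^2$ has trivial stabiliser). To produce a single non-algebraic element of $H$ one must use the group-configuration relation $c'=a'+b'$; the paper does this \`a la Ziegler by taking $b''\models\tpsigma(b'/M,c')$ with $b''\ind_{M,c'}a'$ and showing $b'-b''\in H$ is non-algebraic of minimal type. Without this, $\SU(H)\geq 1$ is not established. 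Your non-orthogonality argument also only exhibits non-orthogonality of $r$ to \emph{one} completion of $q_0$ (the type of the chosen $a''$); to get non-orthogonality to \emph{every} completion one needs the uniformity the paper extracts from stationarity of quantifier-free types over $M$, namely that every $M$-independent pair $(a_0,d_0)$ with $a_0\models q_0$ and $d_0$ generic in $H$ has the same quantifier-free type, hence the same pairwise independence and algebraicity pattern for $(a_0,d_0,a_0+d_0)$. Finally, your justification of $\SU(q_0)=1$ only handles the particular completion $\tpsigma(a'/M)$; ruling out higher-rank completions again goes through this uniform analysis rather than through the interalgebraicity of $a$ and $a'$ alone.
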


\begin{proof}[Proof of Claim \ref{C:qfstab}]
First note that $H$ is finite-dimensional; indeed, any element of $H$ is of the form $a-b$ for $a$ and $b$ realising $q_0$, and $q_0$ is finite-dimensional.
Now observe that $H$ is the intersection of all $\phi$-stabilisers, where $\phi=\phi(x;y,z)=\psi(y+x,z)$ with $\psi(w,z)=\widetilde{\psi}(w,\sigma(w),\ldots,\sigma^m(w),z)$ for some $\L$-formula $\widetilde{\psi}$. 
By the \emph{$\phi$-stabiliser} we mean $\Stab_{\phi}({\mathbf q_0}):=\{g\in G\mid \forall y,z (\delta(y+g,z) \leftrightarrow\delta(y,z))\}$, where $\delta(y,z)$ is the $\phi$-definition of $q_0$ (cf. \cite[proof of Lemma 1.6.16(i)]{Pil96}).

Every such $\phi$-stabiliser is a quantifier-free definable subgroup of $G$. Indeed, an element $h\in G$ stabilises the $\phi$-type of $a'$ over $M$ if and only if $\tilde{h}=(h,\sigma(h),\ldots,\sigma^m(h))$ stabilises the $\widetilde{\phi}$-type of 
$\tilde{a'}=(a',\sigma(a'),\ldots,\sigma^m(a'))$ over $M$ in the 
$\L$-definable group $\widetilde{G}=G\times\sigma(G)\times\cdots\times\sigma^m(G)$. Here, $\widetilde{\phi}(\tilde{x};\tilde{y},z):=\widetilde{\psi}(\tilde{y}+\tilde{x},z)$, 
where the 
group operation  is the one in $\widetilde{G}$. 

As $\ccma$ is supersimple, there is no infinite descending chain of 
definable groups $(H_i)_{i\in\omega}$ such that $(H_i:H_{i+1})$ is infinite for all $i$. 
To prove (1), it is thus enough to show that there is no infinite strictly descending 
chain of quantifier-free definable subgroups $(H_i)_{i\in\omega}$ of $G$ with $\dim_\sigma(H_0)<\infty$ such that $(H_i:H_{i+1})$ is finite for all $i$. This is what we will 
prove now.

First observe that there is $n$ such that $\sigma^n(h)\in \acl(M,h,\ldots,\sigma^{n-1}(h))$ for all $h\in H_0$, so we may assume that $\sigma(h)\in \acl(M,h)$ for 
all $h\in H_0$, by replacing $H_0$ with a set which is in definable bijection with it.
In particular,  $h\in H_0$ is generic in $H_0$ over $M$ if and only if $\dim(h/M)$ (which by assumption is equal to $\dim_\sigma(h/M)$) is maximal.
We claim there are at most countably many complete quantifier-free types 
extending $x\in H_0$ which are of maximal dimension~$d$.
Indeed,
for any $h\in H_0$ there are natural numbers $N$ and $m$ such that for all $k\geq N$, $\tp(\sigma^{k+1}(h)/M,h,\ldots,\sigma^{k}(h))$ is of multiplicity $m$, and so 
$\tp(\sigma^{k+1}(h)/M\sigma(h),\ldots,\sigma^{k}(h))\vdash\tp(\sigma^{k+1}(h)/M,h,\ldots,\sigma^{k}(h))$ for all $k\geq N$.
Thus $\tp(h,\ldots,\sigma^N(h)/M)\vdash\qftp_\sigma(h/M)$ in this case.
But for every $N$ there are only finitely many possibilities for $\tp(h,\ldots,\sigma^N(h)/M)$ of dimension~$d$ as they will all be $\ccm$-generic types of the Zariski closure of $\{(h,\sigma h,\dots,\sigma^Nh):h\in H_0\}$.

Now, a strictly decreasing sequence $(H_i)_{i\in\omega}$ of quantifier-free definable groups with $(H_i:H_{i+1})$ finite for all $i$ would give rise to at least continuum many generic quantifier-free types in $H_0$ over $M$, 
and hence no such sequence exists.
This completes the proof of~(1).

\smallskip

To prove (2), we argue as in \cite{Zie06}. Let $b''\models \tp_{\sigma}(b'/M,c')$ such that $\displaystyle b''\ind^{\ccma}_{M,c'}a',b'$. As $\displaystyle b'\ind^{\ccma}_M c'$, we get $\displaystyle b''\ind^{\ccma}_M c'$, and thus $\displaystyle b''\ind^{\ccma}_M a',b'$ by transitivity.
Since $\displaystyle a'\ind^{\ccma}_M b'$, we get $\displaystyle a'\ind^{\ccma}_M b',b''$ and thus $\displaystyle a'\ind^{\ccma}_M b'-b''$.  As $\tp_\sigma(b',c'/M)=\tp_\sigma(b'',c'/M)$, we get $\tp_\sigma(b',a'/M)=\tp_\sigma(b'',c'-b''/M)$, therefore in particular 
$\tp_\sigma(a'/M)=\tp_\sigma(a'+b'-b''/M)$, showing that $b'-b''\in H$.

Now $$\SU(b'-b''/M)\geq\SU(b'-b''/M,b'')=\SU(b'/M,b'')=\SU(b'/M)=1,$$
and so $b'-b''$ is non-algebraic. On the other hand, 
\begin{eqnarray*}
\SU(b'-b''/M) &=& \SU(b'-b''/M,a') = \SU(a'+b'-b''/M,a')\\
& \leq &\SU(a'+b'-b''/M) = \SU(a'/M) = 1.
\end{eqnarray*}

%\SU(b'-b''/M)& =& \SU(b'-b''/M,a')=\SU(a',b'-b''/M)-\SU(a'/M)\\
%&=&\SU(a',b'-b''/M)-1=\SU(a',a'+b'-b''/M)-1\\
%&\leq &\SU(a'/M)+\SU(a+b'-b''/M)-1=1.

 It follows that 
$(a',b'-b'',a'+b'-b'')$ is a pairwise independent triple and $\tp_{\sigma}(b'-b''/M)$ is minimal. 

Let $r_0:=\qftp_\sigma(b'-b''/M)$. Note that for any $a_0\models q_0$ and any $d_0\models r_0$ with $\displaystyle d_0\ind^{\ccma}_M a_0$ we get $\qftp_\sigma(a_0,d_0/M)=\qftp_\sigma(a',b'-b''/M)$, 
and so
$$\qftp_\sigma(a_0,d_0,a_0+d_0/M)=\qftp_\sigma(a',b'-b'',a'+b'-b''/M),$$ so in particular $a_0,d_0,a_0+d_0$ is pairwise independent over $M$, and each singleton is 
algebraic over $M$ and the other two (in $\ccm$). Statement (2) now follows. Note that $r_0$ is the only generic quantifier-free type of $H$, as any $h\in H$ which is generic over $M$ 
is of the form $a_1-a_2$ for some $(a_1,a_2)\models q_0^{(2)}$.
\end{proof}

We also infer from (2) that $\dim(h/M)=\dim(a'/M)=\dim(G)$ for $h$ generic in $H$ over $M$. 
We claim that $G$ is simple (as an $\A'$-meromorphic group). To prove this, suppose $N$ is a proper infinite $\A'$-meromorphic subgroup of $G$ which is defined over $M$. Then $\dim(N)<\dim(G)$ as well 
as $\dim(G/N)<\dim(G)$, where $G/N$ is the quotient group. As $H$ is of $\SU$-rank 1 and quantifier-free connected, any proper quantifier-free definable subgroup of $H$ is finite. Thus, either $N\supseteq H$, or $N\cap H$ is finite. In the 
first case, we get $\dim(N)\geq\dim(G)$, and in the second case, we get $\dim(G/N)\geq\dim(H/N\cap H)=\dim(G)$. In both cases, this is a contradiction.

Note that as $p$ is assumed to be orthogonal to the projective line, $G$ must be nonalgebraic.
It remains to see that $G$ is $\A'$-compact.
Meromorphic groups in $\A$ were given a Chevalley-type characterisation in~\cite{mergroups}, this was extended to strongly minimal $\A'$-meromorphic groups in~\cite{ams}, and then to all $\A'$-meromorphic groups in~\cite{scanlonmergroups}.
That characterisation says that $G$ is the extension of an $\A'$-compact group by a linear algebraic group. Nonalgebraicity and simplicity thus force $G$ to be $\A'$-compact.
\end{proof}

\appendix

\bigskip
\section{If $TA$ exists then $T^{\PP}A$ exists}

\begin{fact}
\label{Fact:TPA}
Suppose $T=T^{\eq}$ is a complete (multi-sorted) stable theory admitting quantifier elimination in a language~$L$, $\PP$ is a set of sorts in $T$, $L^{\PP}$ is the language consisting of a predicate symbol for each $L$-formula whose variables belong to sorts in $\PP$, and $T^{\PP}=\Th_{L^{\PP}}(\PP)$.
Assume $TA$ exists.
Then the following hold:
\begin{itemize}
\item[(i)] $T^{\PP}A$ exists.
\item[(ii)]If $(M,\sigma)\models TA$, then $(\PP(M),\sigma)\models T^{\PP}A$.
\item[(iii)] Conversely, for every $(P,\sigma)\models T^{\PP}A$ there is a model $(M,\sigma)\models TA$ such that $(\PP(M),\sigma)\elres(P,\sigma)$.
\end{itemize}
\end{fact}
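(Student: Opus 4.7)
The plan is to establish~(ii) first via an amalgamation lemma, and then to derive~(i) and~(iii). As a preliminary, I would observe that $T^{\PP}$ is stable and admits quantifier elimination in $L^{\PP}$: since $T$ is stable, $\PP$ is automatically stably embedded in $T$, and any $L^{\PP}$-formula is, after replacing its atomic $L^{\PP}$-subformulas by their $L$-counterparts, an $L$-formula with variables in $\PP$-sorts, hence, by QE for $T$, equivalent to a quantifier-free such $L$-formula, which is itself $L^{\PP}$-atomic by construction.

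The heart of the argument is an amalgamation lemma for~(ii). Given $(M,\sigma)\models TA$ and any $L^{\PP}_\sigma$-extension $(\PP(M),\sigma)\subseteq(N,\tau)$, I would construct a model $(M^*,\sigma^*)\models TA$ extending $(M,\sigma)$ together with an $L^{\PP}_\sigma$-embedding $j:(N,\tau)\hookrightarrow(\PP(M^*),\sigma^*)$ fixing $\PP(M)$. Working inside a saturated $L$-monster $\UU\succeq M$, QE for $T^{\PP}$ and saturation let me realise $N$ as an $L^{\PP}$-elementary extension $N'\subseteq\PP(\UU)$ of $\PP(M)$, with an $L^{\PP}$-isomorphism $j:N\to N'$ over $\PP(M)$. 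The delicate step is to extend $\sigma$ on $M$ and $\tilde\tau:=j\tau j^{-1}$ on $N'$ to a common $L$-automorphism. I would check that $\bar\sigma:=\sigma\cup\tilde\tau$ is an $L$-elementary partial map on $M\cup N'$: given an $L$-formula $\phi(\bar x,\bar y)$ with $\bar y$ in $\PP$-sorts, $\bar m\in M$, and $\bar n\in N$, stable embeddedness of $\PP$ in $T$ expresses $\phi(\bar m,\cdot)$ as $\psi(\cdot,\bar q)$ for some $L^{\PP}$-formula $\psi$ and some $\bar q\in\PP(M)$, and correspondingly $\phi(\sigma\bar m,\cdot)$ as $\psi(\cdot,\sigma\bar q)$; then $\psi(j\bar n,\bar q)\iff\psi(j\tau\bar n,\sigma\bar q)$ follows by transporting through the $L^{\PP}$-embeddings $j$ (which fixes $\PP(M)$) and $\tau$ (which agrees with $\sigma$ on $\PP(M)$). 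Extending $\bar\sigma$ to an $L$-automorphism of some elementary extension of $\UU$ and then enlarging to a model of $TA$ yields $(M^*,\sigma^*)$. Existential closure of $(\PP(M),\sigma)$ in $(N,\tau)$ then follows: any existential $L^{\PP}_\sigma$-formula over $\PP(M)$ with $\PP$-sorted variables realised in $(N,\tau)$ is realised in $(\PP(M^*),\sigma^*)$, hence in $(M^*,\sigma^*)$ as an $L_\sigma$-formula, and model completeness of $TA$ pulls the realisation back into $M$, hence into $\PP(M)$. This proves~(ii).

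For~(i), I would take $T^{\PP}A$ to be the $L^{\PP}_\sigma$-theory of the structures $(\PP(M),\sigma)$ as $(M,\sigma)$ ranges over models of $TA$. Running the amalgamation lemma from a trivial base shows every $(N,\tau)\models(T^{\PP})_{\forall,\sigma}$ embeds into some $(\PP(M),\sigma)$ with $(M,\sigma)\models TA$, so $T^{\PP}A$ has the required embedding property; model completeness of $T^{\PP}A$ is then a formal consequence of the existential closure supplied by~(ii), applied between any two such structures. Finally, for~(iii), given $(P,\sigma)\models T^{\PP}A$, the embedding step from~(i) places $(P,\sigma)$ inside $(\PP(M^*),\sigma^*)$ for some $(M^*,\sigma^*)\models TA$; the inclusion is between models of $T^{\PP}A$, so model completeness upgrades it to $(\PP(M^*),\sigma^*)\elres(P,\sigma)$, as required. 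The main obstacle is the verification that $\bar\sigma$ is $L$-elementary; once this stable-embeddedness calculation is in hand, the rest is essentially formal.
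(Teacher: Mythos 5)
Your amalgamation lemma is sound and is essentially the same key technical step as in the paper: the verification that $\bar\sigma=\sigma\cup\tilde\tau$ is $L$-elementary via stable embeddedness of $\PP$ is exactly the paper's claim that any extension $(A',\sigma')\subseteq(B',\tau')$ of algebraically closed difference substructures of models of $T^{\PP}$ lifts to one for $T$, using $\tp(A/A')\vdash\tp(A/\PP(M))$. Parts (ii) and (iii) are fine modulo (i). The gap is in (i). You define $T^{\PP}A$ as the theory of the class $\mathcal K_0=\{(\PP(M),\sigma):(M,\sigma)\models TA\}$ and assert that model completeness is ``a formal consequence of the existential closure \dots applied between any two such structures.'' But Robinson's test requires existential closedness for \emph{arbitrary} pairs of models of the candidate theory $\Th(\mathcal K_0)$, and a model of $\Th(\mathcal K_0)$ need not be (isomorphic to) a structure of the form $(\PP(M),\sigma)$ — you have established existential closedness only for the members of $\mathcal K_0$ themselves. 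The general principle you are implicitly invoking, namely that a cofinal family of existentially closed models of a universal theory axiomatises its model companion, is false: the existentially closed groups form a cofinal family of e.c.\ models of the theory of groups, yet that theory has no model companion. So the existence of $T^{\PP}A$ — which is the whole content of (i), and on which the meaning of (ii) depends — is not yet proved.

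What is needed is precisely a device for converting ``a nice cofinal family of existentially closed models'' into ``the class of e.c.\ models is elementary,'' and this is what the paper's proof supplies by routing through the Chatzidakis--Pillay characterisation: $TA$ exists if and only if every model of $T_\sigma$ embeds into a $\kappa$-generic, $\kappa$-saturated model (with $\kappa=|T|^+$), and the $\kappa$-saturated models of $TA$ are exactly the $\kappa$-generic ones. Given this, the paper needs only your lifting lemma plus the observation (Claim~\ref{cl:k-gen}) that $\kappa$-genericity and $\kappa$-saturation pass from $(M,\sigma)$ to $(\PP(M),\sigma)$; existence of $T^{\PP}A$ then follows by applying the criterion to $T^{\PP}$. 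To repair your argument you should either invoke this criterion, or otherwise show directly that every (sufficiently saturated) model of $\Th(\mathcal K_0)$ is itself existentially closed — which amounts to reproving the relevant lemmas of Chatzidakis--Pillay rather than deducing the claim formally.
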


\begin{proof}
This follows from the work of Chatzidakis and Pillay \cite[\S3]{ChPi98}. As we could not find it explicitly stated, we give a proof. 
As replacing $\PP$ by $\dcl(\PP)$ does not change the truth value of the statements (i), (ii) or (iii), we may assume that $\PP=\PP^{\eq}$, i.e. that $T^{\PP}$ eliminates imaginaries as well.

Let $\kappa={\mid \!T\!\mid ^+}$, and let $\mathcal{C}_{T,\sigma}$ 
be the class of $L_\s$-structures $(A,\sigma)$, where $A$ is an algebraically closed subset of a model of $T$ of cardinality $<\kappa$ and $\sigma$ is an elementary permutation of $A$. A model $(M,\sigma)$ of $T_\sigma$ is said to be \emph{$\kappa$-generic} if whenever $(A,\sigma)\subseteq(B,\sigma)$ are elements of $\mathcal{C}_{T,\sigma}$, any embedding of $(A,\sigma)$ into $(M,\sigma)$ extends to an 
embedding of $(B,\sigma)$ into $(M,\sigma)$. 

The following results are shown in \cite[3.4 and 3.5]{ChPi98}. (The proof in loc.\ cit.\ is given for countable $T$, but it works identically in arbitrary cardinality.)
\begin{itemize}
\item[(1)] $TA$ exists if and only if every $(M_0,\sigma_0)\models T_\sigma$ embeds into some $\kappa$-generic $(M,\sigma)\models T_\sigma$ which is $\kappa$-saturated.
\item[(2)] Assume $TA$ exists. Then it is equal to the common theory of all 
$\kappa$-generic $(M,\sigma)$, and a model of $TA$ is $\kappa$-saturated if and only if it is $\kappa$-generic.
\end{itemize} 

Let $(A,\sigma)\in\mathcal{C}_{T,\sigma}$. Put $A':=\PP\cap A$ and $\sigma':=\sigma\!\upharpoonright_{A'}$. We claim that any extension $(A',\sigma')\subseteq(B',\tau')$ in $\mathcal{C}_{T^{\PP},\sigma}$ 
is of the form $(B\cap\PP,\tau\!\upharpoonright_{B\cap\PP})$ for some extension $(B,\tau)\supseteq(A,\sigma)$ in $\mathcal{C}_{T,\sigma}$. Indeed, we may suppose that $A$ and $B'$ are 
subsets of some $M\models T$. Since $\PP$ is stably embedded in $T$ and $\PP=\PP^{\eq}$, we have that $\tp(A/A')\vdash\tp(A/\PP(M)$ and so in particular $\tp(A/A')\vdash\tp(A/B')$. It follows that $\sigma\cup\tau'$ is an 
elementary permutation of $AB'$ and thus extends to an elementary permutation $\tau$ of $B:=\acl(AB')$, proving the claim.

\begin{claim}\label{cl:k-gen}
Suppose $(M,\sigma)\models T_\sigma$ is $\kappa$-generic for $\mathcal{C}_{T,\sigma}$ and $\kappa$-saturated. Then $(\PP(M),\sigma)\models T^{\PP}_\sigma$ is $\kappa$-generic for $\mathcal{C}_{T^{\PP},\sigma}$ and 
$\kappa$-saturated.
\end{claim}

\begin{proof}[Proof of Claim \ref{cl:k-gen}]
Since $(\PP(M),\sigma)$ is interpretable in $(M,\sigma)$, $\kappa$-saturation of the structure $(\PP(M),\sigma)$ is clear. We now prove $\kappa$-genericity. 
Suppose $(A',\sigma')\subseteq(B',\tau')$ is an extension in $\mathcal{C}_{T^{\PP},\sigma}$ and $f':(A',\sigma')\hookrightarrow(\PP(M),\sigma)$ is an embedding. We may identify $A'$ with its image 
in $M$ and put $A:=\acl(A')\subseteq M$. Then $A$ is $\sigma$-invariant, and we have $(A,\sigma)\in\mathcal{C}_{T,\sigma}$, $A'=A\cap\PP$ and $\sigma'=\sigma\!\upharpoonright_{A'}$. Let 
$(B,\tau)\supseteq(A,\sigma)$ be as provided by the claim in the previous paragraph. By $\kappa$-genericity of $(M,\sigma)$, there is an embedding $g:(B,\tau)\hookrightarrow(M,\sigma)$ over $A$. We may thus 
define $g':=g\!\upharpoonright_{B'}:(B',\tau')\hookrightarrow(\PP(M),\sigma)$, an embedding extending $f'$ as desired.
\end{proof}

Now any $(P,\sigma)\models T^{\PP}_\sigma$ embeds into some $(M_0,\sigma_0)\models T_\s$ (since $\sigma$ is an elementary map in the sense of the original language $L$), so by (1) into some $(M,\sigma)\models T_\sigma$ which is $\kappa$-generic for $\mathcal{C}_{T,\sigma}$ and $\kappa$-saturated. Thus $(P,\sigma)\subseteq(\PP(M),\sigma)$ with $(\PP(M),\sigma)$ $\kappa$-generic for $\mathcal{C}_{T^{\PP},\sigma}$ and 
$\kappa$-saturated by Claim \ref{cl:k-gen}. Using (1) again, this shows the existence of $T^{\PP}A$, proving (i).

\smallskip

To prove (ii), replacing $(M,\sigma)$ by an elementary extension, we may assume that $(M,\sigma)$ is $\kappa$-saturated, so $\kappa$-generic by (2). By Claim \ref{cl:k-gen}, $(\PP(M),\sigma)\models T^{\PP}_\sigma$ is $\kappa$-generic, so in particular a model of $T^{\PP}A$.

\smallskip

Part (iii) follows from the fact that every $(P,\sigma)\models T^{\PP}A$ embeds into some $(M,\sigma)\models TA$. By (ii), $(\PP(M),\sigma)\models T^{\PP}A$, and so $(\PP(M),\sigma)\elres(P,\sigma)$ by model-completeness of $T^{\PP}A$.
\end{proof}

\bigskip
\section{Complex analytic jet spaces}

\noindent
In this appendix we review a particular construction of higher order tangent spaces in complex analytic geometry.
The exposition given here is based on the third author's (more detailed) unpublished survey~\cite{moosajet} of related constructions and their use in model theory.

\subsection{Linear spaces}
Suppose $X$ is a complex variety.
A {\em linear space over $X$} is a complex variety over~$X$, $L\to X$, whose fibres are uniformly equipped with complex vector space structure.
That is, there are holomorphic maps for addition $+:L\times_XL\to L$, scalar multiplication $\lambda:\mathbb C\times L\to L$, and zero section $z:X\to L$, all over $X$, satisfying the usual axioms.
For example, $X\times \mathbb C^n$ is a linear space over~$X$, it is  the {\em trivial} linear space of rank $n$.

There is a natural notion of {\em homomorphism} between linear spaces $L$ and $L'$ over~$X$, the set of which is denoted by $\hom_X(L,L')$ and has canonically the structure of an $\mathcal O_X(X)$-module.
See~\cite[$\S$1.4]{fischer76} for details.

Given a coherent analytic sheaf $\mathcal F$ on $X$, there is a {\em linear space over $X$ associated to $\mathcal F$}, denoted by $L(\mathcal F)\to X$ which has the property that for all open $U$ in $X$
\begin{equation*}
\mathcal F(U)\isom \hom_U(L(\mathcal F)_U,U\times\mathbb C)
\end{equation*}
as $\mathcal O_X(U)$-modules.
Locally $L(\mathcal F)\to X$ can be described as follows: let $U\subset X$ be a (euclidean) open subset for which there exists a resolution of $\mathcal F_U$ as
$$\xymatrix{
\mathcal O^p_U\ar[r]^\alpha & \mathcal O_U^q\ar[r] & \mathcal F_U\ar[r] & 0}$$
Represent $\alpha$ as a $q\times p$ matrix $M=(m_{ij}(x))$ with entries in $\mathcal O_U$.
Then $L(\mathcal F)_U$ is the subspace of the trivial linear space $U\times\mathbb C^q$ defined by the equations
$$m_{1i}(x)y_1+\cdots+m_{qi}(x)y_q=0$$
for $i=1,\dots,p$.

\begin{fact}[Section~1.8 of~\cite{fischer76}]
\label{linspace}
If $\mathcal F$ is a coherent analytic sheaf on $X$ and $x\in X$ then there is a canonical isomorphism $L(\mathcal F)_x\isom\hom_{\mathbb C}(\mathcal F_x\otimes_{\mathcal O_{X,x}}\mathbb C,\mathbb C)$
\end{fact}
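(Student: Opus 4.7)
The plan is to reduce immediately to the local picture at $x$ and then extract the isomorphism from a finite free presentation of the germ $\mathcal F_x$. Choose a (euclidean) open neighbourhood $U$ of $x$ together with a resolution
\[
\xymatrix{\mathcal O_U^p\ar[r]^\alpha & \mathcal O_U^q\ar[r] & \mathcal F_U\ar[r] & 0}
\]
and represent $\alpha$ by the $q\times p$ matrix $M=(m_{ij})$ of holomorphic functions on $U$ as in the construction of $L(\mathcal F)$ recalled above. The fibre $L(\mathcal F)_x$ is then, by construction, the linear subspace of $\mathbb C^q$ cut out by the equations $\sum_{j=1}^q m_{ji}(x) y_j=0$ for $i=1,\dots,p$, i.e.\ it is the kernel of the transpose $M(x)^T:\mathbb C^q\to\mathbb C^p$.

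Next I would compute the other side from the same resolution. Tensoring the presentation over $\mathcal O_{X,x}$ with $\mathbb C=\mathcal O_{X,x}/\mathfrak m_{X,x}$ and using right-exactness gives an exact sequence
\[
\mathbb C^p\xrightarrow{M(x)}\mathbb C^q\to \mathcal F_x\otimes_{\mathcal O_{X,x}}\mathbb C\to 0,
\]
so $\mathcal F_x\otimes_{\mathcal O_{X,x}}\mathbb C$ is naturally identified with $\operatorname{coker} M(x)$. Applying $\hom_{\mathbb C}(-,\mathbb C)$ converts this cokernel into the kernel of the transpose, and we obtain
\[
\hom_{\mathbb C}(\mathcal F_x\otimes_{\mathcal O_{X,x}}\mathbb C,\mathbb C)\;\cong\;\ker\bigl(M(x)^T:\mathbb C^q\to\mathbb C^p\bigr)\;=\;L(\mathcal F)_x,
\]
which is the desired isomorphism at the level of $\mathbb C$-vector spaces.

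What remains, and is the only non-formal point, is canonicality: the isomorphism must be independent of the chosen resolution and should be functorial in $\mathcal F$. The cleanest way to secure this is to avoid the matrix description altogether and instead derive the isomorphism from the defining universal property
\[
\mathcal F(U)\cong\hom_U\bigl(L(\mathcal F)_U,\,U\times\mathbb C\bigr).
\]
Restricting a section of the right-hand side to the fibre over $x$ sends $s\in\mathcal F(U)$ to a $\mathbb C$-linear functional on $L(\mathcal F)_x$ that depends only on the image of the germ $s_x$ in $\mathcal F_x\otimes_{\mathcal O_{X,x}}\mathbb C$ (since elements of $\mathfrak m_{X,x}\mathcal F_x$ evaluate to $0$ on the fibre over $x$). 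This yields a canonical $\mathbb C$-linear map $\mathcal F_x\otimes_{\mathcal O_{X,x}}\mathbb C\to\hom_{\mathbb C}(L(\mathcal F)_x,\mathbb C)$, and hence, after dualising, a canonical map $L(\mathcal F)_x\to\hom_{\mathbb C}(\mathcal F_x\otimes_{\mathcal O_{X,x}}\mathbb C,\mathbb C)$; the resolution-based calculation above verifies that this map is an isomorphism. The main potential obstacle is simply checking that the local model really does realise the universal property on fibres --- i.e.\ that every fibrewise $\mathbb C$-linear functional on $L(\mathcal F)_x$ comes from a section of $\mathcal F$ near $x$ --- but this is exactly what the presentation-based computation provides, since both sides are identified with $\ker M(x)^T$.
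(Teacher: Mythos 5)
Your argument is correct. Note that the paper does not prove this statement at all --- it is quoted as a Fact with a citation to Section~1.8 of Fischer's \emph{Complex Analytic Geometry} --- so there is no in-paper proof to compare against; what you have written is the standard derivation one would find in that reference. The linear algebra is right: the fibre $L(\mathcal F)_x$ is $\ker\bigl(M(x)^T\bigr)$ by the displayed defining equations, right-exactness of $-\otimes_{\mathcal O_{X,x}}\mathbb C$ identifies $\mathcal F_x\otimes_{\mathcal O_{X,x}}\mathbb C$ with $\operatorname{coker}M(x)$, and dualising a finite-dimensional cokernel gives the kernel of the transpose. Your handling of canonicality via the universal property is also the right move. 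The only detail you leave implicit is the compatibility of the two descriptions: to see that the canonical map $\mathcal F_x\otimes_{\mathcal O_{X,x}}\mathbb C\to\hom_{\mathbb C}(L(\mathcal F)_x,\mathbb C)$ really is the perfect pairing between $\operatorname{coker}M(x)$ and $\ker M(x)^T$, one should note that in the local model the universal-property isomorphism $\mathcal F(U)\cong\hom_U(L(\mathcal F)_U,U\times\mathbb C)$ sends the $j$-th generator (the image of $e_j\in\mathcal O_U^q$) to the $j$-th coordinate functional on $L(\mathcal F)_U\subseteq U\times\mathbb C^q$; that is exactly how $L(\mathcal F)$ is constructed, and with that observation the pairing is the standard one $\bigl(\mathbb C^q/\operatorname{im}M(x)\bigr)\times\ker M(x)^T\to\mathbb C$, which is nondegenerate. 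This is a one-line addition, not a gap.
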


What about definability of these objects in $\ccm$?
We can relativise the usual compactification of $\mathbb C^n$ as $\mathbb P_n(\CC)$ as follows.
If in the above local construction of $L(\mathcal F)$ we treat $(y_1:\dots:y_q)$ as homogeneous co-ordinates, then the equations cut out a complex analytic subset of $U\times\mathbb P_{q-1}(\mathbb C)$, and one obtains by gluing what is called the {\em projective linear space associated to $\mathcal F$}, denoted by $\mathbb P(\mathcal F)\to X$.
Applying the construction to the sheaf $\mathcal F\times\mathcal O_X$ we see that the linear space $L(\mathcal F)\to X$ embeds as a Zariski open subset of the projective linear space $\mathbb P(\mathcal F\times\mathcal O_X)\to X$ in such a way that linear structure  extends meromorphically to the projective linear space.
In particular, if $X$ is compact then $L(\mathcal F)\to X$, together with the uniform vector space structure on the fibres, is definable in $\ccm$.

\medskip
\subsection{Relative differentials}
Suppose $\pi:X\to S$ is a holomorphic map of complex varieties and consider the diagonal embedding $d:X\to X\times_SX$.
Recall that the inverse image sheaf $d^{-1}\mathcal O_{X\times_SX}$ is by definition the sheaf of rings on $X$ which assigns to any open set $U$ the direct limit of the rings $O_{X\times_SX}(V)$ as $V$ ranges among open neighbourhoods of $d(U)$ in $X\times_SX$.
There is an induced homomorphism of sheaves $d^{-1}\mathcal O_{X\times_SX}\to\mathcal O_X$ which is surjective because $d$ is a closed immersion.
Let $\mathcal I$ be the (ideal sheaf) kernel of this homomorphism.
The {\em sheaf of relative $n$-differentials} is the sheaf $\underline\Omega_{X/S}^n:=\mathcal I/\mathcal I^{n+1}$, viewed as a sheaf of $\mathcal O_X$-modules using the first co-ordinate projection.

\begin{fact}[Corollaire~2.7 of~\cite{groth}]
\label{diffgerm}
If $\pi:X\to S$ is a holomorphic map of complex varieties and $x\in X$ then there is a canonical isomorphism
$$\underline\Omega_{X/S,x}^n\otimes_{\mathcal O_{X,x}}\CC\isom\mathfrak m_{X_{\pi(x)},x}/\mathfrak m_{X_{\pi(x)},x}^{n+1}$$
where $\mathfrak m_{X_{\pi(x)},x}$ is the maximal ideal of the local ring of the fibre $X_{\pi(x)}$ at the point~$x$.
\end{fact}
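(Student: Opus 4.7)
The plan is to reduce the statement to a purely local computation with analytic local rings, and then to exhibit the isomorphism using local coordinates. Throughout, write $A := \mathcal O_{S,\pi(x)}$, $B := \mathcal O_{X,x}$, and $\bar B := B/\mathfrak m_A B = \mathcal O_{X_{\pi(x)},x}$, with maximal ideal $\bar{\mathfrak m} := \mathfrak m_B/\mathfrak m_A B$. The stalk of $\mathcal O_{X\times_S X}$ at the diagonal point $d(x)$ is the analytic tensor product $B \mathbin{\hat\otimes}_A B$ in the sense of Grauert--Remmert, and, writing $I$ for the kernel of the multiplication map $\mu: B \mathbin{\hat\otimes}_A B \to B$, the stalk at $x$ of $\underline\Omega^n_{X/S}$ is $I/I^{n+1}$ as a $B$-module via the first tensor factor. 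So the claim becomes: the canonical map $\bar{\mathfrak m}/\bar{\mathfrak m}^{n+1} \to (I/I^{n+1})\otimes_B \mathbb C$ is an isomorphism.

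First I would construct the map in one direction. Given $\bar f \in \bar{\mathfrak m}$ represented by $f \in \mathfrak m_B$, send $\bar f$ to the class of $1\otimes f - f\otimes 1$ in $I/I^{n+1}$, tensored with $1$ over $B$. Two well-definedness checks are required. The first is that changing the lift $f$ by an element of $\mathfrak m_A B$ does not affect the output: if $f = (\pi^* t)\cdot g$ with $t \in \mathfrak m_A$ and $g \in B$, then in $B \mathbin{\hat\otimes}_A B$ we have $\pi^* t \otimes 1 = 1 \otimes \pi^* t$, whence $1\otimes f - f\otimes 1 = (\pi^*t\otimes 1)(1\otimes g - g\otimes 1)$, which lies in $\mathfrak m_B \cdot I$ and so dies after tensoring by $\mathbb C$ over $B$. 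The second is compatibility with powers: modulo $I^{n+1}$, the expression $1\otimes f - f\otimes 1$ satisfies a Leibniz-type identity that allows one to define the map consistently on $\bar{\mathfrak m}^k/\bar{\mathfrak m}^{k+1}$ for each $k\leq n$ by replacing a product $\bar f_1\cdots \bar f_k$ with the corresponding product of the $(1\otimes f_i - f_i\otimes 1)$.

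Second, to invert the map I would work in analytic local coordinates. Near $x$, embed $X$ as a closed analytic subset of an open neighbourhood of $(\pi(x),0)$ in $S\times\mathbb C^m$, where $y_1,\dots,y_m$ are coordinates on $\mathbb C^m$ whose restrictions to the fibre generate $\bar{\mathfrak m}$. Then $B \mathbin{\hat\otimes}_A B$ is a quotient of $A\{y_1,\dots,y_m,y_1',\dots,y_m'\}$, and $I$ is generated by the $y_i - y_i'$. Tensoring $I/I^{n+1}$ with $\mathbb C$ over $B$ via the first projection kills all monomials in the $y_i$ (the ``unprimed'' variables), so the quotient is presented by the classes of monomials of positive degree in the $y_i'$, modulo degree $n+1$ and modulo the defining equations of $X_{\pi(x)}$ in the primed coordinates. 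This presentation agrees with $\bar{\mathfrak m}/\bar{\mathfrak m}^{n+1}$ via $y_i' \leftrightarrow \bar y_i$, and yields the sought inverse.

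The main obstacle is justifying the analytic tensor product calculation: the identification $\mathcal O_{X\times_S X,d(x)} \cong B \mathbin{\hat\otimes}_A B$, the generation of $I$ by the differences $y_i - y_i'$, and the behaviour of $I^{n+1}$ in this coordinate presentation. In Grothendieck's Cartan seminar exposé (cited as Corollaire~2.7 of~\cite{groth}), these points are handled using the universal property of the analytic tensor product together with Weierstrass preparation, and we would simply invoke that framework rather than redevelop it.
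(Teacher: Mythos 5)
The paper does not actually prove this statement: it is stated as a Fact and quoted wholesale from the cited reference (Corollaire~2.7 of Grothendieck's Cartan seminar expos\'e), so there is no internal argument to compare yours against. Your sketch is, in substance, the standard proof that one would find behind that citation: identify the stalk $\mathcal O_{X\times_S X,d(x)}$ with the analytic tensor product $B\mathbin{\hat\otimes}_A B$, let $I=\ker\mu$, map $\bar f\mapsto [1\otimes f-f\otimes 1]$, kill the unprimed variables by tensoring with $\CC$ over $B$ via the first factor, and invert in local embedding coordinates using that $I$ is generated by the differences $y_i-y_i'$. The analytic inputs you defer (universal property of the analytic fibre product, generation of the diagonal ideal, behaviour of $I^{n+1}$) are exactly the points the paper itself defers by citing Grothendieck, so this is a reasonable division of labour.

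One step in your write-up is muddled, though recoverable. You need a single well-defined map $\bar{\mathfrak m}/\bar{\mathfrak m}^{n+1}\to (I/I^{n+1})\otimes_B\CC$, not a family of maps on the graded pieces $\bar{\mathfrak m}^k/\bar{\mathfrak m}^{k+1}$; the appeal to a ``Leibniz-type identity'' to ``define the map consistently'' on products is not quite the right formulation. What is actually required is that $f\in\mathfrak m_B^{n+1}$ implies $1\otimes f-f\otimes 1\in I^{n+1}+\mathfrak m_B I$, and this follows from a direct expansion: for $f=f_1\cdots f_{n+1}$ write $1\otimes f_i=(1\otimes f_i-f_i\otimes 1)+f_i\otimes 1$ and expand the product; the term $\prod_i(f_i\otimes 1)$ cancels against $f\otimes 1$, every remaining term contains at least one factor from $I$, and it lies in $I^{n+1}$ if all its factors do and in $\mathfrak m_B I$ otherwise. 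With that substitution (and your first well-definedness check, which is correct as written), the map is well defined and your coordinate computation supplies the inverse, so the outline goes through.
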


\medskip
\subsection{Relative jet spaces}
The word ``jet'' is used in various ways in the literature.
The reader should be warned that our jet spaces are different from those of Buium, which among model theorists usually go by the term ``prolongation spaces''.

Given a point $y$ on a complex variety $Y$, by the {\em $n$th jet space of $Y$ at $y$} we mean the finite dimensional complex vector space $\jet^n(Y)_y:=\hom_{\mathbb C}(\mathfrak m_{Y,y}/\mathfrak m_{Y,y}^{n+1},\mathbb C)$.
We view this as a kind of higher order tangent space.

\begin{lemma}
\label{jetsdetermine}
Suppose $A$ and $B$ are irreducible complex analytic subsets of a complex variety $Y$ and $y\in A\cap B$.
If $\jet^n(A)_y=\jet^n(B)_y$ for all $n\geq1$, then $A=B$.
\end{lemma}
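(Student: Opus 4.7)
I would prove this by passing from the jet space equality to an ideal equality in the local ring $\mathcal O_{Y,y}$, then using irreducibility to upgrade a local statement to a global one.

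Let $I_A, I_B \subseteq \mathfrak m := \mathfrak m_{Y,y}$ be the ideals of $A$ and $B$ in $\mathcal O_{Y,y}$. The first step is to unwind the inclusion $\jet^n(A)_y \hookrightarrow \jet^n(Y)_y$. Since $\mathfrak m_{A,y} = \mathfrak m/I_A$, a standard computation gives $\mathfrak m_{A,y}/\mathfrak m_{A,y}^{n+1} \cong \mathfrak m/(\mathfrak m^{n+1}+I_A)$, so dualising identifies $\jet^n(A)_y$ with the annihilator in $\jet^n(Y)_y$ of the image $\overline{I_A}$ of $I_A$ in $\mathfrak m/\mathfrak m^{n+1}$. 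Since $\mathfrak m/\mathfrak m^{n+1}$ is a finite-dimensional $\CC$-vector space, the hypothesis $\jet^n(A)_y = \jet^n(B)_y$ translates (via double annihilator) into $\overline{I_A} = \overline{I_B}$, i.e.\ $I_A + \mathfrak m^{n+1} = I_B + \mathfrak m^{n+1}$ for every $n \geq 1$.

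The next step is to intersect over $n$. The local ring $\mathcal O_{Y,y}$ of a complex variety is Noetherian local, so Krull's intersection theorem applied to the finitely generated module $\mathcal O_{Y,y}/I_A$ gives $\bigcap_n(I_A+\mathfrak m^n) = I_A$, and similarly for $I_B$. Therefore $I_A = I_B$ as ideals of $\mathcal O_{Y,y}$. By the Rückert Nullstellensatz (or simply because these are reduced ideals defining germs of analytic sets), this forces $(A,y) = (B,y)$ as analytic germs, so there is an open neighbourhood $U$ of $y$ in $Y$ with $A \cap U = B \cap U$.

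Finally, I would globalise using irreducibility. Let $A_{\operatorname{reg}}$ denote the smooth locus of $A$, which is a connected complex manifold because $A$ is irreducible. The intersection $A \cap B$ is a closed analytic subset of $Y$ that contains the nonempty open subset $A \cap U$ of $A$, so $A_{\operatorname{reg}} \cap B$ is a closed analytic subset of the connected manifold $A_{\operatorname{reg}}$ containing a nonempty open set; by the identity principle it must equal $A_{\operatorname{reg}}$. Hence $A_{\operatorname{reg}} \subseteq B$, and taking closures in $Y$ yields $A \subseteq B$; by symmetry $A = B$.

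The only genuine subtlety is step two — one has to know that $\mathcal O_{Y,y}$ is Noetherian even at singular points of $Y$, which is indeed the case since it is a quotient of a convergent power series ring. Aside from that, each step is a standard application of commutative algebra and analytic geometry, and the whole argument is essentially algebraic once the jet spaces are identified with annihilators of $I_A, I_B$ modulo powers of $\mathfrak m$.
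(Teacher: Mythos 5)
Your proof is correct and takes essentially the same route as the paper's: both dualise the jet-space equality into the statement that the vanishing ideals of $A$ and $B$ agree modulo every power of the maximal ideal, apply Krull's intersection theorem to conclude equality (resp.\ containment) of the germ ideals, and then use irreducibility to pass from the local statement to $A=B$. The only cosmetic differences are that the paper fixes a single germ $f\in I_B$ and applies the intersection theorem in $\mathcal O_{A,y}$ rather than to the full ideals in $\mathcal O_{Y,y}$, and globalises by noting that an analytic subset of the irreducible set $A$ containing a nonempty open piece must be all of $A$, instead of invoking the identity principle on the smooth locus.
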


\begin{proof}
Note that in the statement we are viewing $\jet^n(A)_y$ and $\jet^n(B)_y$ canonically as subspaces of $\jet^n(Y)_y$.
We will show that $A\subseteq B$ and conclude by symmetry that $A=B$.
As $A\cap B$ is an analytic subset of $A$, and as $A$ is irreducible, it suffices to show that in some non-empty (euclidean) open subset of $Y$ containing $y$, say $U$, $(A\cap U)\subseteq(B\cap U)$.
This in turn reduces to showing that the defining ideal of the germ of $B$ at $y$ is contained in that of the germ
of $A$.

To that end, suppose $f\in \mathcal O_{Y,y}$ is a germ of a holomorphic function at $y$ which vanishes on the germ of $B$.
It follows that every linear functional in $\jet^n(B)_y$ vanishes on $f$, and so as $\jet^n(A)_y=\jet^n(B)_y$, the same is true of all functionals in $\jet^n(A)_y$.
This implies that the image of $f$  in $\mathcal O_{A,y}$ is contained in $\mathfrak m_{A,y}^{n+1}$ for all~$n$.
As $\bigcap_n\mathfrak m_{A,y}^{n+1}=0$, we have that the image of $f$ in $\mathcal O_{A,y}$ is zero.
\end{proof}

What is particularly important for us is that when $\pi:X\to S$ is a holomorphic map of complex varieties, the $n$th jet spaces of the fibres of $\pi$ vary uniformly.

\begin{definition}
Suppose $\pi:X\to S$ is a holomorphic map of complex varieties.
By the {\em $n$th jet space of $X$ relative to $S$}, which we will denote by $\jet^n (X/S)\to X$, we will mean the linear space over $X$ associated to $\underline\Omega_{X/S}^n$.
\end{definition}

A consequence of Facts~\ref{linspace} and~\ref{diffgerm} is that the fibres of $\jet^n (X/S)\to X$ are
$$\jet^n(X/S)_x=\jet^n(X_{\pi(x)})_x=\hom_{\mathbb C}(\mathfrak m_{X_{\pi(x)},x}/\mathfrak m_{X_{\pi(x)},x}^{n+1},\mathbb C)$$
for all $x\in X$.
Moreover, it follows from our discussion of linear spaces and their compactification in projective linear spaces that when $X$ and $S$ are compact, $\jet^n (X/S)\to X$ as well as the vector space structure on the fibres is (uniformly) definable in $\ccm$.

\bigskip
\section{The abelian group configuration}

\noindent
The abelian group configuration theorem, Theorem~\ref{T:AbGpConf} below, is part of
the folklore. A version of the statement appears for example in~\cite[Section 4]{evhr}, but we could find no complete published proof, so we supply
one.

\begin{lemma}\label{L:Cross-comm}
    Let $G$ be an $\infty$-definable connected group in a stable
    theory which eliminates imaginaries.
    Let $p$ be the generic type of $G$. Suppose that for $(a,b,c) \models  p^{(3)}$,
	\[ abc \in \acl(b,\acl(a,c)\cap\acl(b,abc)) .\]
    We will denote this situation by 
    \[ \xymatrix{
      a \ar@{-}'[rd][rrdd]  &   & abc\\
                            & D & \\
      b \ar@{-}'[ur][uurr]  &   & c
      }
      \]
      where $D := \acl(a,c) \cap \acl(b,abc)$.
      Then $G$ is abelian.

\end{lemma}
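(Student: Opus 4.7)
The plan is to use the cross condition to force the product $abc$ to be essentially symmetric in how $a$ and $c$ interact via $b$, and from this symmetry to extract that $b$ commutes with generic elements of $G$. Abelianness of $G$ then follows by connectedness.

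First I would unwind the hypothesis: the condition $abc \in \acl(b, D)$ with $D = \acl(a,c) \cap \acl(b, abc)$ is equivalent to the existence of a tuple $d \in \acl(a,c)$ which is interalgebraic with $abc$ over~$b$. So, up to finite ambiguity, there is a definable function realising $d = f(a,c) = g(b, abc)$, which makes $f$ constant on the fibres of the map $(a,c) \mapsto abc$ for the fixed~$b$. A concrete description of the generic fibre is
\[ \{(au^{-1},\, b^{-1}ubc) : u \in G\} , \]
obtained by taking an independent copy $(a',c')$ of $(a,c)$ over $(b,D)$ with (after passing to a finite-index representative) $a'bc'=abc$, and setting $u := (a')^{-1}a$; the equation $a'bc' = abc$ rearranges to $ub = b(c'c^{-1})$, so $c' = b^{-1}ubc$.

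Second, I would exploit the fibre invariance $f(au^{-1}, b^{-1}ubc) = f(a,c)$ for all $u\in G$. A dimension count using $a\ind b$ shows $\dim(d/b) = \dim(abc/b) = \dim G$, so $f$ is nontrivial and its level sets coincide generically with the fibres. On the other hand, $f$ is definable from $(a,c)$ \emph{without reference to $b$}, so the invariance constraint has to be compatible as the twist $u \mapsto b^{-1}ub$ varies with~$b$. Running the construction with two independent generic choices $b$ and $b'$ (so both $(a,b,c)$ and $(a,b',c)$ realise $p^{(3)}$) produces, via the common function $f$, an algebraic relation forcing the conjugations by $b$ and $b'$ to agree on generic elements of $G$; equivalently, the centraliser $C_G(b^{-1}b')$ contains the generic type of~$G$.

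Third, connectedness of $G$ closes the argument: a type-definable subgroup of the connected group $G$ containing the generic type is all of~$G$, so $C_G(b^{-1}b') = G$ for generic independent $b, b'$. As $b^{-1}b'$ ranges over a generic subset of $G$, this shows that generic elements of $G$ are central, and since $G$ is generated by its generic type (connectedness), $Z(G)=G$ and $G$ is abelian.

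The main obstacle is Step~2: the passage from ``$f$ is fibre-invariant for the fixed $b$'' to ``conjugation by $b$ on generic elements is trivial'' requires comparing the data extracted from different choices of $b$ in the configuration and pinning down a precise algebraic relation by means of canonical bases and uniqueness of the generic type in a connected stable group. This is the technical heart of the abelian part of the group configuration theorem, carried out carefully in the classical stable group-theory references.
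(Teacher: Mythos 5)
The essential step of the lemma is missing. Your Step~2 --- passing from ``$f$ is invariant along the fibres of $(a,c)\mapsto abc$ for a fixed $b$'' to ``the conjugations by two independent generics $b,b'$ agree on generic elements'' --- is precisely the content that has to be proved, and you defer it to ``classical stable group-theory references'' instead of proving it. Moreover, the mechanism you sketch is not sound as stated: the set $D=\acl(a,c)\cap\acl(b,abc)$, hence the tuple $d$ and your ``function'' $f$, depend a priori on $b$; rerunning the construction with an independent $b'$ produces $D'=\acl(a,c)\cap\acl(b',ab'c)$ and some $f'$, and nothing in your argument identifies $f'$ with $f$, nor explains why the two invariance constraints $f(au^{-1},b^{-1}ubc)=f(a,c)$ and $f'(au^{-1},(b')^{-1}ub'c)=f'(a,c)$ should force $b^{-1}ub=(b')^{-1}ub'$ for generic $u$. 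Note also that what the configuration gives directly is only the weaker statement that a generic conjugate $b^{-1}eb$ lies in $\acl(e)$; upgrading algebraicity to an actual identity of conjugates requires a further argument (a stationarity trick), which your proposal does not supply. As it stands, Steps~1 and~3 are routine bookkeeping around a hole where the proof should be.

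For comparison, the paper's proof is short and self-contained and never compares different choices of $b$. Take $(a',c')$ realising the nonforking extension of $\tp(a,c/D)$ to $\acl(a,b,c)$. Since $D\subseteq\acl(a,c)$ and $(a,c)\ind b$, transitivity gives $a,c,a',c'\ind b$; and since $b,abc\in\acl(D,b)$, stationarity over $\acl(D,b)$ forces $a'bc'=abc$. Setting $e:=a^{-1}a'$ yields $b^{-1}eb=cc'^{-1}$, and the independence above gives $b^{-1}eb\ind_e b$, hence $b^{-1}eb\in\acl(e)$, with $(e,b)\models p^{(2)}$. The remaining step is the stationarity trick you are missing: write a generic $b$ as $cd^{-1}$ with $(a,c,d)\models p^{(3)}$; by the first part $c^{-1}ac$ and $d^{-1}ad$ lie in $\acl(a)$, and $c\equiv_{\acl(a)}d$ by stationarity of $p$, so $c^{-1}ac=d^{-1}ad$, whence $b^{-1}ab=a$. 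Thus conjugation by $b$ is generically, hence everywhere, trivial, and since every element of $G$ is a product of two generics, $G$ is abelian. If you wish to keep your route via germs and comparison of twists, you would in effect have to reprove this argument in detail rather than cite it.
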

\begin{proof}
  We first show that for $(e,b) \models p^{(2)}$, we have $b^{-1}eb \in \acl(e)$.

  Let $(a',c') \models  \tp((a,c) / D) |_{\acl(a,b,c)}$.
  Note first that $a',c' \ind_{\acl(a,c)} b$ since $D \subseteq  \acl(a,c)$,
  so by transitivity
  \begin{equation}\label{bind} a,c,a',c' \ind b .\end{equation}

  Meanwhile, by stationarity and independence, $(a',c') \equiv _{\acl(D,b)} (a,c)$.

  By assumption $b,abc \in \acl(D,b)$, so it follows that
    \[ a'bc' = abc .\]

  Let $e := a^{-1}a'$, so $b^{-1}eb = cc'^{-1}$. Then by (\ref{bind}), $b^{-1}eb \ind_e b$, so $b^{-1}eb \in \acl(e)$.

  Meanwhile, $a\ind_b bc$, and so since $bc\models p|_b$, we have 
  (by \cite[Lemma~1.6.9]{Pil96})
  that $a\ind_b abc$. Hence $a\ind b,abc$. So $a\ind
  D$. Since also $a\ind_D a'$, it follows that $a\ind a'$, and so $e\models p$.
  Applying (\ref{bind}) once again, we conclude that $(e,b) \models p^{(2)}$.

  It follows that the action of $G$ on itself by conjugation is generically
  constant, and hence trivial, so $G$ is abelian.
  Explicitly: if $(a,b)\models p^{(2)}$,
  we may write $b$ as $cd^{-1}$ with $(a,c,d)\models p^{(3)}$;
  we then conclude from the above that $c^{-1}ac$ and $d^{-1}ad$ are in $\acl(a)$;
  but $c \equiv_{\acl(a)} d$ by stationarity of $p$, so $c^{-1}ac = d^{-1}ad$.
  So $b^{-1}ab=a$, and so the action of $b$ by conjugation is generically
  trivial, and hence trivial. Now any element $g$ of $G$ is a product of two
  generics, and so conjugation by $g$ is trivial.
\end{proof}

\begin{theorem}[Abelian Group Configuration]\label{T:AbGpConf}
  Let $T=T^{\eq}$ be stable, let $M \models  T$ be $|T|^+$-saturated, and suppose in
  $\UU \elres M$ we have the following dependence diagram
      \[ \xymatrix{
 & & &&&& c \ar@{-}'[ddll][dddlll] \ar@{-}'[dlll][ddllllll]   \\
	& & & b \ar@{-}'[d][dd] & & & \\
	a & &  & w & z \ar@{-}'[l][llll] & & \\
	&  & & x &&   & \\
	&& & & & & y\;\; , \ar@{-}'[uull][uuulll] \ar@{-}'[ulll][uullllll] } \]
    where each intersection point corresponds to an element of $\UU$,
    if $a_1,a_2,a_3$ are the three distinct points on a line then $a_i \in
    \acl(M,a_j,a_k)$ whenever $\{i,j,k\}=\{1,2,3\}$, and any triple which is
    not on a common line, and is not\footnote{This exception was erroneously
    omitted in the published version of this article.}
    the triple $(w,c,y)$, is independent over $M$.

    Then there is a $\infty$-definable connected {\bf abelian} group $G$ defined
    over $M$, and $a',b',c',x',y',z'\in G$ such that each primed element is
    interalgebraic over $M$ with the corresponding unprimed element, and such
    that $c'=b'a'$, $y'=a'x'$ and $z'=b'y' = c'x'$.
\end{theorem}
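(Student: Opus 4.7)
My plan is to reduce the proof to applying Lemma~\ref{L:Cross-comm} by first running the classical (non-abelian) group configuration theorem on the six principal points, and then using the seventh point $w$ to verify the cross-commutation hypothesis.

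First, I would invoke the standard group configuration theorem (originally due to Hrushovski; see, e.g., Bouscaren's treatment in \emph{Model Theory and Algebraic Geometry}, or Pillay's \emph{Geometric Stability Theory}, Chapter~5) to the six-point sub-configuration consisting of $a, b, c, x, y, z$ together with the lines $\{a,b,c\}$, $\{a,x,y\}$, $\{b,y,z\}$, $\{c,x,z\}$ (the usual quadrilateral). This yields a connected $\infty$-definable group $G$ defined over $M$, with generic type $p$, and elements $a', b', c', x', y', z'\in G$, each interalgebraic over $M$ with its unprimed counterpart, satisfying
\[
c' = b'a',\quad y' = a'x',\quad z' = b'y' = c'x'.
\]

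Next, I would analyze the seventh point $w$ to produce the input for the cross-commutation lemma. From the diagram, $w$ lies on the two additional lines $\{a,w,z\}$ and $\{b,w,x\}$, so
\[
w \in \acl(M,a,z)\cap\acl(M,b,x),\qquad z\in\acl(M,a,w).
\]
Because $a, b, x, z$ are interalgebraic with $a', b', x', z'$ over $M$, the same inclusions hold with primed elements in place of unprimed ones. In particular,
\[
w\in \acl(M,b',x')\cap\acl(M,a',z'),\qquad z'\in\acl(M,a',w).
\]

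Finally, I would apply Lemma~\ref{L:Cross-comm}, working in the theory with the elements of $M$ named by constants (the proof of the lemma is uniform in parameters), to the triple $(b',a',x')$. This triple is indeed an independent triple of generics of $G$ over $M$, because $a, b, x$ form an independent triple over $M$ (they lie on no common line), and the required interalgebraicity transfers independence to $a', b', x'$. Using $c'=b'a'$ and $z'=c'x'$, we have $b'\cdot a'\cdot x' = z'$, so the intersection appearing in Lemma~\ref{L:Cross-comm} is
\[
\acl(M,b',x')\cap\acl(M,a',\,b'a'x') = \acl(M,b',x')\cap\acl(M,a',z'),
\]
which contains $w$. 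Since $z'\in\acl(M,a',w)$, this gives $b'a'x' \in \acl(M,a',\acl(M,b',x')\cap\acl(M,a',b'a'x'))$, exactly the cross-commutation hypothesis. The lemma then forces $G$ to be abelian, completing the proof.

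The main obstacle I anticipate is less conceptual than bookkeeping: pinning down the version of the non-abelian group configuration theorem that delivers primed elements interalgebraic over $M$ (and not merely over some larger base), and verifying that passing the cross-commutation lemma over the base $M$ causes no difficulty. With those technicalities in place, everything reduces to the clean observation that the seventh point $w$ provides precisely the element in $\acl(b',x')\cap\acl(a',z')$ needed to run Lemma~\ref{L:Cross-comm} on the triple $(b',a',x')$.
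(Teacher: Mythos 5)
Your proposal is correct and follows essentially the same route as the paper: invoke the standard group configuration theorem on the six principal points to obtain $G$ and the primed elements, then observe that $w$ sits in $\acl(M,b',x')\cap\acl(M,a',z')$ with $z'=b'a'x'\in\acl(M,a',w)$, so Lemma~\ref{L:Cross-comm} applied to the triple $(b',a',x')$ yields commutativity. The paper's proof is exactly this two-step argument, citing \cite[Theorem 5.4.5, Remark 5.4.9]{Pil96} for the non-abelian part.
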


\begin{proof}
  By the usual group configuration theorem, \cite[Theorem 5.4.5, Remark
  5.4.9]{Pil96}, 
  we obtain everything but commutativity of $G$. 
  On the other hand, note that we have the configuration
      \[ \xymatrix{
      b' \ar@{-}'[rd][rrdd]  &   & z'\\
                            & \acl(w) & \\
      a' \ar@{-}'[ur][uurr]  &   & x'
      } ,\]
  and so $G$ is abelian by Lemma \ref{L:Cross-comm}.
\end{proof}

\bigskip

%\bibliographystyle{plain}
%\bibliography{ccma}

\end{document}